\documentclass[a4paper,reqno]{amsart}

\title[Exotic Springer fibers for one-row bipartitions]{Exotic Springer fibers for orbits corresponding to one-row bipartitions}

\author{Neil Saunders}
\address{Department of Mathematical Sciences\\
	University of Greenwich\\
	United Kingdom; and \\
	Honorary Associate of the School of Mathematics and Statistics \\
	University of Sydney \\
	Australia}
\email{n.saunders@greenwich.ac.uk}

\author{Arik Wilbert}
\address{Department of Mathematics\\
	University of Georgia\\
	Athens, GA 30602\\
	USA}
\email{arik.wilbert@uga.edu}

\usepackage{geometry}
\geometry{a4paper,left=35mm,right=35mm, top=3cm, bottom=3cm}
\usepackage[english]{babel}
\usepackage{amssymb}
\usepackage{multicol}
\usepackage{young}
\usepackage[vcentermath,enableskew]{youngtab}
\usepackage[shortlabels]{enumitem}
\usepackage{tikz}
\usetikzlibrary{arrows}
\usepackage{tikz-cd}
\usepackage[all, arrow, matrix, curve, knot, poly]{xy}
\usepackage{verbatim}
\usepackage{graphicx}

\theoremstyle{definition}
\newtheorem{defi}{Definition}
\theoremstyle{remark}
\newtheorem{ex}[defi]{Example}
\newtheorem{rem}[defi]{Remark}

\theoremstyle{plain}
\newtheorem{lem}[defi]{Lemma}

\newtheorem{cor}[defi]{Corollary}
\newtheorem{prop}[defi]{Proposition}
\newtheorem{thm}[defi]{Theorem}

\newtheorem{introthm}{Theorem}

\newcommand{\CupConnect}{\text{\----}}
\newcommand{\RayConnect}{\mid\hspace{-5pt}\text{\----}\,i}
\newcommand{\ba}{{\bf a}}
\newcommand{\bb}{{\bf b}}
\newcommand{\bigslant}[2]{{\raisebox{.2em}{$#1$}\left/\raisebox{-.2em}{$#2$}\right.}}

\DeclareMathOperator{\spn}{span}

\begin{document}

\begin{abstract}
We study the geometry and topology of exotic Springer fibers for orbits corresponding to one-row bipartitions from an explicit, combinatorial point of view. This includes a detailed analysis of the structure of the irreducible components and their intersections as well as the construction of an explicit affine paving. Moreover, we compute the ring structure of cohomology by constructing a CW-complex homotopy equivalent to the exotic Springer fiber. This homotopy equivalent space admits an action of the type C Weyl group inducing Kato's original exotic Springer representation on cohomology. Our results are described in terms of the diagrammatics of the one-boundary Temperley--Lieb algebra (also known as the blob algebra). This provides a first step in generalizing the geometric versions of Khovanov's arc algebra to the exotic setting. 
\end{abstract}

\maketitle

\section{Introduction}\label{sec:intro}

In~\cite{Kat09}, Kato introduced the exotic nilpotent cone in a successful attempt to extend the Kazhdan--Lusztig--Ginzburg geometrization of affine Hecke algebras (see \cite{KL87}, \cite{CG97}) from the one-parameter to the multi-parameter case by considering the equivariant algebraic $K$-theory of an exotic version of the Steinberg variety. Kato's construction establishes a Deligne--Langlands type classification of irreducible modules of affine Hecke algebras of type C with only very mild restrictions on the parameters. Let $\mathcal{N}(\mathfrak{gl}_{2m})\subseteq\mathfrak{gl}_{2m}(\mathbb C)$ be the ordinary nilpotent cone of type A and let $\mathcal S\subseteq\mathfrak{gl}_{2m}(\mathbb C)$ denote the $\mathrm{Sp}_{2m}(\mathbb C)$-invariant complement of $\mathfrak{sp}_{2m}(\mathbb C)$ in $\mathfrak{gl}_{2m}(\mathbb C)$. The exotic nilpotent cone is the singular affine variety $\mathfrak{N}=\mathbb C^{2m}\times\left(\mathcal S \cap \mathcal{N}(\mathfrak{gl}_{2m})\right)$. Many of its properties relating to, e.g., intersection cohomology of orbit closures (see~\cite{AH08} and~\cite{SS14}), theory of special pieces (see~\cite{AHS11}), and the Lusztig--Vogan bijection (see~\cite{Nan13}) have been explored in follow-up work to~\cite{Kat09}.

In~\cite{Kat09}, \cite{Kat11}, Kato used the exotic nilpotent cone to construct an exotic version of the Springer correspondence for the Weyl group of type C. This correspondence is less intricate than the classical type C Springer correspondence (see~\cite{Spr76},~\cite{Spr78},~\cite{Sho79}) because it gives in fact a bijection between the orbits under the $\mathrm{Sp}_{2m}(\mathbb C)$-action on $\mathfrak{N}$ and the isomorphism classes of all complex, finite-dimensional, irreducible representations of the type C Weyl group (i.e., it behaves more like the classical type A Springer correspondence for the symmetric group). In particular, since the irreducible representations of the Weyl group can be labeled by bipartitions (see~\cite[Appendix B]{Mac95}), it follows that the orbits in $\mathfrak N$ can be labeled by bipartitions (see~\cite{AH08} for an explicit bijection between exotic nilpotent orbits and bipartitions). 

In analogy to classical Springer theory, the irreducible Weyl group representations can be constructed on the top non-vanishing cohomology of so-called exotic Springer fibers. These algebraic varieties appear as the fibers under a resolution of singularities of the exotic nilpotent cone. More explicitly, given $(v,x)\in\mathfrak{N}$, the exotic Springer fiber $\mathcal Fl_{(v,x)}$ is the projective variety consisting of all flags $\{0\}=F_0\subsetneq F_1\subsetneq F_2\subsetneq\ldots\subsetneq F_m\subsetneq \mathbb C^{2m}$ such that $F_m$ is a Lagrangian subspace in $\mathbb C^{2m}$ (with respect to a fixed symplectic form), $v\in F_m$ and $xF_i\subseteq F_{i-1}$, $1\leq i\leq m$. The varieties $\mathcal Fl_{(v,x)}$ depend (up to isomorphism) only on the $\mathrm{Sp}_{2m}(\mathbb C)$-orbit of $(v,x)$ in $\mathfrak{N}$ which justifies the notation $\mathcal Fl^{(\lambda,\mu)}$, where $(\lambda,\mu)$ is the bipartition of $m$ labeling the orbit of $(v,x)$. The geometric structure of the exotic Springer fibers is only poorly understood. In general, they are not smooth and decompose into many irreducible components. Note that even for the classical Springer fibers of type A, the irreducible components themselves are not smooth in general (see \cite{FM10} for a parametrization of all smooth irreducible components in type A in terms of standard tableaux).

In this article we study exotic Springer fibers for orbits corresponding (via the bijection from~\cite{AH08}) to one-row bipartitions of $m$, i.e., pairs of partitions of the form $((k),(m-k))$, $0\leq k\leq m$, where each partition consists of a single number only. In this case we show that the irreducible components exhibit structural properties very similar to the ones discovered by Fung, \cite{Fun03}, for two-row Springer fibers of type A (see also~\cite{ES12} for type D). The first main result proved in Section~\ref{sec:structure_irred_comp} (see also Theorem~\ref{thm:algebraic_main_result}\ref{third_part_thm}) is the following:

\begin{introthm}
Every irreducible component of $\mathcal Fl^{((k),(m-k))}$ is an $(m-k)$-fold iterated fiber bundle over $\mathbb P^1$. In particular, all irreducible components of $\mathcal Fl^{((k),(m-k))}$ are smooth.
\end{introthm}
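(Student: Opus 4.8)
The plan is to transpose Fung's analysis of two-row Springer fibers in type A~\cite{Fun03} (see also~\cite{ES12} for the type D analogue) to the present exotic, type C situation. We fix once and for all the explicit representative $(v,x)$ of the exotic orbit labeled by $((k),(m-k))$ together with the adapted symplectic basis of $\mathbb{C}^{2m}$ constructed in Section~\ref{sec:structure_irred_comp}, and we recall from there that the irreducible components of $\mathcal{F}l^{((k),(m-k))}$ are indexed by certain cup diagrams $D$ of one-boundary Temperley--Lieb type, with $K_D$ denoting the component attached to $D$. The arcs of $D$ together with the decoration on its outer strands encode, combinatorially, the way the flags in $K_D$ interact with $x$ and with the distinguished vector $v$.

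Now fix $D$ and analyze $K_D$ by reconstructing a flag $F_\bullet$ lying in it one step at a time, in an order dictated by $D$ rather than the naive order $F_1 \subsetneq F_2 \subsetneq \cdots \subsetneq F_m$: one processes the arcs and strands of $D$ from the inside outwards, at each stage reading off from the conditions $xF_i \subseteq F_{i-1}$, $v \in F_m$, the isotropy relations $F_{m+i} = F_{m-i}^{\perp}$ governing the ``upper half'' of the extended flag, and the subspaces already produced, which subspace is next to be determined. The key point is that at each stage this subspace is either \emph{forced} --- uniquely pinned down by the constraints --- or else is a line inside a $2$-dimensional space that the preceding data has isolated, hence a free point of $\mathbb{P}^1$. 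Recording these choices in order exhibits $K_D$ as a tower
\[ K_D \;=\; X_{m-k} \longrightarrow X_{m-k-1} \longrightarrow \cdots \longrightarrow X_1 \;=\; \mathbb{P}^1 \]
in which each map is a Zariski-locally-trivial fiber bundle with fiber $\mathbb{P}^1$, with explicit local sections written down as in type A. This is precisely the asserted $(m-k)$-fold iterated fiber bundle over $\mathbb{P}^1$; since a $\mathbb{P}^1$-bundle over a smooth base is smooth, smoothness of every $K_D$ follows at once by induction along the tower.

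The step I expect to be the main obstacle --- and the point where the exotic setting genuinely departs from~\cite{Fun03} --- is verifying that at every stage the space from which a line must be selected has dimension exactly $2$ (when the step is a choice), respectively that the candidate subspace is genuinely unique (when the step is forced): it must never collapse to dimension $\leq 1$, which would split an incorrect factor off $K_D$, nor grow to dimension $\geq 3$. The difficulty is that the symplectic form pairs the lower-half subspaces $F_{\leq m}$ against the upper-half subspaces $F_{> m}$, so a choice made at an inner arc propagates to a constraint on a subspace attached to an outer strand, and the bookkeeping cannot be localized; it is precisely the decoration on the outer strands of $D$ --- recording whether the constraint $v \in F_m$ has already been absorbed by the portion of the flag built so far --- that makes this dimension count close up correctly at each step. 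Granting this, together with the normal form for $(v,x)$, the local triviality of the bundle maps and hence the entire iterated bundle structure follow by the same formal argument as in the classical type A case.
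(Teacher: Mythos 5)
Your proposal correctly identifies the target structure (an iterated $\mathbb{P}^1$-bundle in the spirit of Fung~\cite{Fun03}) and the right style of argument, but the step you yourself flag as ``the main obstacle'' is left unresolved, and that obstacle is precisely the content of the proof. You propose to carry the isotropy relations $F_{m+i}=F_{m-i}^\perp$ through the entire reconstruction, acknowledge that these couple the lower- and upper-half subspaces non-locally, and then write ``Granting this, \ldots the entire iterated bundle structure follow[s].'' That granting is a genuine gap: nothing in your proposal shows that the ambient space at each free step is \emph{exactly} two-dimensional once the symplectic constraints are propagated, and this is precisely where the exotic type~C situation could a~priori fail to mirror Fung's type~A analysis.

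The paper removes the obstacle rather than navigating it. Proposition~\ref{prop:exotic_embedding} shows that for orbits labeled by one-row bipartitions, once the exotic Springer fiber is embedded in the purely type~A variety $Y_m$ of partial flags compatible with the two-block nilpotent $z$, the isotropy requirement on $F_m$ is \emph{automatically satisfied} and may be dropped altogether: the embedded fiber is cut out by the single condition $e_k\in F_m$. The tower is then built with no reference to the symplectic form, by running left to right over the left endpoints $i_1<\cdots<i_{m-k}$ of the cups and half-cups in the diagram $\ba$: the ray relations force $F_{i_1-1}$, the map $(F_\bullet)\mapsto F_{i_1}/F_{i_1-1}\in\mathbb{P}(z^{-1}F_{i_1-1}/F_{i_1-1})\cong\mathbb{P}^1$ is a fiber bundle whose fiber is the subvariety with $F_{i_1}$ (and the space at the matching right cup endpoint) fixed, and one iterates along $i_2,\ldots,i_{m-k}$. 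The two-dimensionality is immediate from $z$ having two Jordan blocks; no isotropy bookkeeping arises. A smaller but telling misreading: the one-boundary cup diagrams that index the components here carry no decorations, and the half-cups play the role you attribute to ``decorated outer strands''; decorated diagrams appear in the blob/type~D diagrammatics, which the paper deliberately avoids for this argument (see Remark~\ref{rem:diagrammatics_dictionary}).
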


A combinatorial parametrization of the irreducible components of exotic Springer fibers in the spirit of \cite{Spa76}, \cite{Var79}, for type A (see also \cite{Spa82}, \cite{vL89}, for the remaining classical types) was only recently obtained in~\cite{NRS16} using standard bitableaux. Our results are based on a new parametrization in terms of so-called one-boundary cup diagrams. These diagrams enable us to write down explicit relations between the vector spaces of the flags contained in a given irreducible component (see Theorem~\ref{thm:algebraic_main_result}\ref{second_part_thm} for details). This is similar to the results obtained by Stroppel--Webster~\cite[Section 1.2]{SW12} for type A two-row Springer fibers based on earlier work by Fung~\cite{Fun03} (see also~\cite{ILW19} for the remaining classical types).

Consider a rectangle in the plane with $m$ vertices evenly spread along its upper horizontal edge. A one-boundary cup diagram is a diagram inside the rectangle obtained by either connecting two vertices by a lower semicircle called a cup, or by connecting a vertex with a point on the lower horizontal edge by a vertical line segment called a ray, or by connecting a vertex with a point on the right vertical edge of the rectangle by a half-cup. In doing so, we require that the diagram is crossingless and every vertex is connected to exactly one endpoint of either a cup, a half-cup or a ray. If the cups, half-cups and rays of two given cup diagrams are incident with exactly the same vertices in both diagrams (regardless of the actual drawing of the cups, half-cups and rays in the plane) we consider the diagrams as equal. Here is an example:
\[
\begin{tikzpicture}[scale=.8,baseline={(0,-.45)}]
\draw[dotted] (-.3,-1.1) -- (3.9,-1.1) -- (3.9,0) -- (-.3,0) -- cycle;
\node[above] at (0,0) {$1$};
\node[above] at (0.5,0) {$2$};
\node[above] at (1,0) {$3$};
\node[above] at (1.5,0) {$4$};
\node[above] at (2,0) {$5$};
\node[above] at (2.5,0) {$6$};
\node[above] at (3,0) {$\dots$};
\node[above] at (3.5,0) {$m$};

\foreach \x in {0,0.5,1,...,2.5}
\fill (\x,0) circle (2pt);
\fill (3.5,0) circle (2pt);

\draw[thick] (0,0) .. controls +(0,-.5) and +(0,-.5) .. +(.5,0);
\draw[thick] (2,0) .. controls +(0,-.5) and +(0,-.5) .. +(.5,0);

\draw[thick] (1,0) -- (1,-1.1);
\draw[thick] (1.5,0) to[out=270,in=170] (2.65,-0.7);
\node [rotate=357] at (3,-0.75) {$\cdots$};
\draw[thick] (3.3,-.75) -- (3.9,-.8);

\draw[thick, rotate around={180:(2,0)}] (0.5,0) arc (0:75:5.4mm); 
\end{tikzpicture}
\] 

In Section~\ref{sec:torus_fixed_points} we show that the set of all one-boundary cup diagrams on $m$ vertices with a total number of at most $m-k$ cups and half-cups combinatorially describes a Bia\l{}ynicki-Birula paving of $\mathcal Fl^{((k),(m-k))}$. More precisely, each one-boundary cup diagram corresponds to an affine cell and the respective diagram can be used to explicitly determine all flags contained in a given cell (see Theorem~\ref{thm:bialynicki-birula}). The closures of the cells corresponding to the diagrams with the number of cups and half-cups being maximal are exactly the irreducible components of $\mathcal Fl^{((k),(m-k))}$. In particular, we get a bijection between the set $\mathbb B^{((k),(m-k))}$ of all one-boundary cup diagrams on $m$ vertices with a total number of $m-k$ cups and half-cups and the irreducible components of $\mathcal Fl^{((k),(m-k))}$ (see also Theorem~\ref{thm:algebraic_main_result}\ref{first_part_thm}).

One-boundary cup diagrams naturally appear in the representation theory of the one-boundary Temperley--Lieb algebra. They label a basis of the so-called link pattern representation which appears in the study of the XXZ quantum chain,~\cite{NRG05}. In the mathematics literature, the one-boundary Temperley--Lieb algebra is more commonly known as the blob algebra,~\cite{MS94}. Instead of working with the one-boundary cup diagrams, one typically uses certain decorated cup diagrams which do not contain half-cups. We refer the reader to Remark~\ref{rem:diagrammatics_dictionary} for details and a concrete dictionary between the diagrammatics. We remark that the one-boundary Temperley--Lieb algebra or blob algebra is naturally a quotient of the two-parameter Hecke algebra for the Weyl group of type C,~\cite{MW03}. This explains the appearance of the one-boundary cup diagrams when studying exotic Springer fibers.

In Section~\ref{sec:Topological_Springer_fiber} we use the one-boundary cup diagrams to construct a topological model of the exotic Springer fiber $\mathcal Fl^{((k),(m-k))}$ similar to the topological models of two-row Springer fibers constructed in \cite{Kho04}, \cite{Weh09}, \cite{RT11}, \cite{Rus11}, for type A and in \cite{ES12}, \cite{Wil18}, for the remaining classical two-row types. Let $\mathbb S^2 \subseteq \mathbb R^3$ be the two-dimensional standard unit sphere with north pole $p=(0,0,1)$. Given a cup diagram $\ba\in\mathbb B^{((k),(m-k))}$, define $S_\ba\subseteq\left(\mathbb S^2\right)^m$ as the submanifold consisting of all $(x_1,\ldots,x_m)\in\left(\mathbb S^2\right)^m$ satisfying the relations $x_j=-x_i$ if vertices $i$ and $j$ are connected by a cup in $\ba$, and $x_i=p$ if vertex $i$ is connected to a ray in $\ba$. There are no relations involving coordinates indexed by a vertex connected to a half-cup. This yields a topological model for the exotic Springer fiber, i.e., we obtain the following homeomorphism (see also Theorem~\ref{thm:main_result_1}):

\begin{introthm}\label{introthm:top_model}
There exists a homeomorphism 
\[
\bigcup_{\ba \in \mathbb B^{((k),(m-k))}}S_\ba\cong\mathcal Fl^{((k),(m-k))}
\]
such that the images of the $S_\ba$ are the irreducible components of $\mathcal Fl^{((k),(m-k))}$.
\end{introthm}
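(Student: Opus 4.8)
The plan is to establish the homeomorphism by exhibiting a common combinatorial skeleton that controls both the exotic Springer fiber and the union of the $S_\ba$, and then matching the two space-over-skeleton constructions cell by cell. Concretely, I would first recall from Section~\ref{sec:torus_fixed_points} that the one-boundary cup diagrams in $\mathbb B^{((k),(m-k))}$ index the top-dimensional cells of the Bia\l{}ynicki--Birula paving, and that each such diagram $\ba$ explicitly records linear relations among the subspaces $F_i$ of any flag lying in the closure of the corresponding cell; the irreducible component $X_\ba\subseteq\mathcal Fl^{((k),(m-k))}$ is exactly this closure. By Theorem~A (the iterated-fiber-bundle description), $X_\ba$ is an $(m-k)$-fold iterated $\mathbb P^1$-bundle, and a parallel elementary computation shows $S_\ba\cong(\mathbb S^2)^{m-k}$ is built by the same iterated construction with $\mathbb S^2$ in place of $\mathbb P^1\cong\mathbb S^2$; so one first produces a homeomorphism $\varphi_\ba\colon S_\ba\xrightarrow{\sim}X_\ba$ for each fixed $\ba$, ideally canonically, i.e.\ compatible with the bundle towers, so that it is determined by a choice of trivializing identification $\mathbb S^2\cong\mathbb P^1$ at each stage.

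The second step, and the technical heart of the argument, is to verify that these individual homeomorphisms glue. For a pair $\ba,\bb\in\mathbb B^{((k),(m-k))}$ one must understand the intersection $X_\ba\cap X_\bb$ inside $\mathcal Fl^{((k),(m-k))}$ and the intersection $S_\ba\cap S_\bb$ inside $(\mathbb S^2)^m$, and check that $\varphi_\ba$ and $\varphi_\bb$ agree on the overlap. On the topological side, $S_\ba\cap S_\bb$ is cut out by the union of the defining relations of $\ba$ and $\bb$, so it is a subproduct of spheres with antipodal identifications, and its combinatorics is governed by the ``combined diagram'' $\ba\cup\bb$ (the overlay of the two cup diagrams). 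On the geometric side one needs the analogous statement that $X_\ba\cap X_\bb$ is the locus of flags satisfying the union of the linear relations imposed by $\ba$ and by $\bb$; this should follow from the explicit description of the cells in Theorem~\ref{thm:bialynicki-birula} together with the fact that a flag in $X_\ba$ lies in $X_\bb$ precisely when it additionally satisfies $\bb$'s relations. Granting this, the gluing reduces to the single-component comparison applied to the sub-towers indexed by the connected pieces of $\ba\cup\bb$, where the freely moving $\mathbb P^1$/$\mathbb S^2$ factors are exactly those not constrained by either diagram.

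I expect the main obstacle to be precisely this compatibility of the per-component homeomorphisms on all pairwise (and hence all multiple) intersections: one must make the choices of $\mathbb S^2\cong\mathbb P^1$ coherently across the whole poset of cup diagrams, not just for a single pair. The natural way to force coherence is to fix once and for all a standard identification $\mathbb S^2\cong\mathbb P^1$ (sending the north pole $p$ to a basepoint, and the antipodal map to the map $\ell\mapsto\ell^{\perp}$ on lines, matching the relation $x_j=-x_i\leftrightarrow$ the pairing $F_i\leftrightarrow F_i^{\perp}$ forced by the cup) and then build every $\varphi_\ba$ out of this single choice via the canonical iterated-bundle coordinates; with that normalization the relations defining $S_\ba$ and those defining $X_\ba$ are literally transported to one another, and agreement on overlaps becomes automatic. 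After the gluing is in place, one assembles the $\varphi_\ba$ into a continuous bijection
\[
\varphi\colon\bigcup_{\ba\in\mathbb B^{((k),(m-k))}}S_\ba\longrightarrow\mathcal Fl^{((k),(m-k))},
\]
note that the source is compact (a finite union of compact manifolds) and the target is Hausdorff, conclude that $\varphi$ is a homeomorphism, and observe that by construction $\varphi(S_\ba)=X_\ba$, which is the irreducible component attached to $\ba$; this is exactly the asserted statement. A final routine check is that the pieces $S_\ba$ actually cover everything and that no spurious identifications occur, which follows from the bijection between $\mathbb B^{((k),(m-k))}$ and the irreducible components recorded in Theorem~\ref{thm:algebraic_main_result}\ref{first_part_thm}.
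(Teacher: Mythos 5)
Your proposal and the paper's proof diverge at the critical step. You propose to build a homeomorphism $\varphi_\ba\colon S_\ba\to K_\ba$ separately for each cup diagram $\ba$, using the iterated $\mathbb P^1$-bundle structure of $K_\ba$ and the product-of-spheres structure of $S_\ba$, and then to glue these component maps along overlaps. You correctly identify the gluing as the technical heart of the matter, but your proposed remedy --- fixing a single identification $\mathbb S^2\cong\mathbb P^1$ and hoping agreement on overlaps "becomes automatic" --- does not actually close the gap. The iterated-bundle tower on $K_\ba$ is built from the left endpoints $i_1<\cdots<i_{m-k}$ of cups and half-cups of $\ba$, which vary with $\ba$; on an overlap $K_\ba\cap K_\bb$ the two towers are genuinely different, and a single choice of $\mathbb S^2\cong\mathbb P^1$ does not by itself make the resulting trivializations agree. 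To make it automatic you would need all the $\varphi_\ba$ to be restrictions of one ambient map, which is precisely the extra ingredient you haven't produced.

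The paper avoids the gluing problem entirely by never working one component at a time. It embeds $\mathcal Fl^{((k),(m-k))}$ into the smooth ambient variety $Y_m$ (Proposition~\ref{prop:exotic_embedding}), invokes the Cautis--Kamnitzer diffeomorphism $\phi_m\colon Y_m\xrightarrow{\sim}(\mathbb P^1)^m$ (Proposition~\ref{homeo}), defined on all of $Y_m$ via Hermitian orthogonal complements $F_j\cap F_{j-1}^\perp$, and composes with the stereographic identification $(\mathbb P^1)^m\cong(\mathbb S^2)^m$. The homeomorphism in Theorem~\ref{thm:main_result_1} is then just the restriction of this global diffeomorphism, and the real work is Proposition~\ref{prop:preimage_contained}: verifying $K_\ba=\phi_m^{-1}(T_\ba)$ for each $\ba$, by induction on the number of cups using the cup-deletion morphisms $q_m^i$ of~\eqref{eq:q_i_morphism} and the commutative squares~\eqref{eq:comm_diag},~\eqref{eq:commutative_diag}. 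There is no compatibility-on-overlaps check because there is only one map. You should also be careful not to assume the description of $K_\ba\cap K_\bb$ as "flags satisfying the union of $\ba$'s and $\bb$'s relations" as an input: in the paper, Theorem~\ref{thm:intersection_of_components} describing those intersections is deduced after and from the homeomorphism, so using it in the construction of the homeomorphism would be circular. To repair your argument, you should replace the per-component construction with a single ambient map (the Cautis--Kamnitzer $\phi_m$ or something equivalent) and then verify component-by-component that it restricts correctly.
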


We equip $\mathbb S^2$ with the structure of a CW-complex consisting of a $0$-cell and a $2$-cell and equip $(\mathbb S^2)^m$ with the product CW-structure. Let $\mathrm{Sk}_{2(m-k)}^m$ denote the $2(m-k)$-skeleton of $(\mathbb S^2)^m$. The next theorem explicitly identifies the homotopy type of the exotic Springer fiber $\mathcal Fl^{((k),(m-k))}$, i.e., we show that $\mathrm{Sk}_{2(m-k)}^m$ is isomorphic (but in general not homeomorphic, see Example~\ref{ex:homeo_vs_homotopy}) to $\mathcal Fl^{((k),(m-k))}$ in the homotopy category of topological spaces.

\begin{introthm}\label{introthm:cohomology}
There exists a homotopy equivalence $\mathcal Fl^{((k),(m-k))}\simeq\mathrm{Sk}_{2(m-k)}^m$. In particular, the cohomology ring $H^*(\mathcal Fl^{((k),(m-k))},\mathbb C)$ is isomorphic (as a graded algebra) to $H^*(\mathrm{Sk}_{2(m-k)}^m,\mathbb C)$ which has an explicit presentation given by 
\[
\bigslant{\mathbb C[X_1,\ldots,X_m]}{\left\langle X_i^2, X_I\;
  \begin{array}{|c}
  1 \leq i \leq m,\\
  I\subseteq  \{1,\ldots,m\}, |I|=m-k+1
  \end{array}
  \right\rangle},
\]
where $X_I=\prod_{i\in I}X_i$ and $\mathrm{deg}(X_i)=2$.
\end{introthm}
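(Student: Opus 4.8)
The plan is to build on Theorem~\ref{introthm:top_model}, which already provides a homeomorphism between $\mathcal Fl^{((k),(m-k))}$ and the union $Y := \bigcup_{\ba}S_\ba$ inside $(\mathbb S^2)^m$. So it suffices to prove that $Y$ is homotopy equivalent to the skeleton $\mathrm{Sk}_{2(m-k)}^m$ and then compute the cohomology of the latter. For the first part, I would first observe that each $S_\ba$ is a coordinate subtorus-type submanifold: the cup relations $x_j=-x_i$ and ray relations $x_i=p$ cut out a submanifold isomorphic to $(\mathbb S^2)^{c(\ba)}$, where $c(\ba)$ is the number of cups plus half-cups. Since every $\ba\in\mathbb B^{((k),(m-k))}$ has $c(\ba)=m-k$, each $S_\ba$ is a product of $m-k$ spheres sitting diagonally/antidiagonally inside $(\mathbb S^2)^m$, hence is a $2(m-k)$-dimensional closed submanifold.

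The key step is to exhibit an explicit retraction. The natural candidate is a deformation of $(\mathbb S^2)^m$ that pushes each coordinate toward the north pole $p$, i.e., the map $r_t$ contracting $\mathbb S^2$ radially onto $p$ along great-circle arcs, applied coordinatewise. This retracts $(\mathbb S^2)^m$ onto a point but, more to the point, one restricts attention to how it interacts with $Y$ and with $\mathrm{Sk}_{2(m-k)}^m$. I would instead argue the other direction: show $Y$ contains $\mathrm{Sk}_{2(m-k)}^m$ up to homotopy by a cellular approximation argument, or conversely that both spaces have the same cells in the CW-filtration. Concretely, give $Y$ the CW-structure induced from the product CW-structure on $(\mathbb S^2)^m$ (each $S_\ba$ being a subcomplex since its defining relations are compatible with the $0$-cell/$2$-cell decomposition of each $\mathbb S^2$ factor once one checks that $x_i=-x_j$ and $x_i=p$ are cellular conditions --- the antipodal map is cellular for the standard CW-structure). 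Then identify which cells of $(\mathbb S^2)^m$ lie in $Y$: a product cell $\prod_i e_i$ with $e_i\in\{0\text{-cell},2\text{-cell}\}$ lies in some $S_\ba$ precisely when the set $I=\{i: e_i \text{ is the }2\text{-cell}\}$ has size at most $m-k$ and is ``realizable'' by a cup diagram. The combinatorial heart is checking that \emph{every} subset $I$ with $|I|\le m-k$ is realizable --- one can always pair up or ray off the remaining vertices and use half-cups freely since half-cups impose no relation. This shows $Y$ and $\mathrm{Sk}_{2(m-k)}^m$ have literally the same set of cells inside $(\mathbb S^2)^m$, but attaching maps may differ (explaining why they are homotopy equivalent but not homeomorphic, cf.\ Example~\ref{ex:homeo_vs_homotopy}); a standard argument (e.g.\ both inclusions into $(\mathbb S^2)^m$ factor through each other up to homotopy, or a direct cell-by-cell homotopy built from the contractibility of the relevant attaching regions) upgrades the cell-wise equality to a homotopy equivalence.

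Once the homotopy equivalence is established, the cohomology computation is a direct application of the structure of $(\mathbb S^2)^m$. We have $H^*((\mathbb S^2)^m;\mathbb C)\cong \mathbb C[X_1,\dots,X_m]/\langle X_i^2\rangle$ with $\deg X_i = 2$, by the K\"unneth formula. The inclusion $\mathrm{Sk}_{2(m-k)}^m\hookrightarrow (\mathbb S^2)^m$ induces an isomorphism on $H^j$ for $j<2(m-k)$ and an injection (a surjection) in the relevant degrees; more precisely, the cellular cochain description shows $H^*(\mathrm{Sk}_{2(m-k)}^m)$ is the quotient of $H^*((\mathbb S^2)^m)$ killing all monomials $X_I$ of degree $>2(m-k)$, i.e., $|I|>m-k$, equivalently $|I|=m-k+1$ suffices to generate the ideal together with the $X_i^2$. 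This yields exactly the presentation in the statement. The main obstacle I anticipate is the middle step: carefully verifying that $Y$ and the skeleton share the same cell structure requires the combinatorial lemma that all subsets of size $\le m-k$ are realizable by one-boundary cup diagrams, and then producing an honest homotopy equivalence rather than merely matching Betti numbers --- the cleanest route is probably to show both subcomplexes of $(\mathbb S^2)^m$ are \emph{deformation retracts} of a common open neighborhood, or to directly construct mutually inverse cellular maps using that the antipodal identifications are homotopically trivial relative to the skeleton.
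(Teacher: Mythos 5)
The cohomology computation (last paragraph) is correct and matches the paper. However, the middle step contains a genuine error that makes the proposed argument collapse. You assert that ``the antipodal map is cellular for the standard CW-structure,'' so that each $S_\ba$ (and hence $Y=\mathcal S^{((k),(m-k))}$) is a subcomplex of $(\mathbb S^2)^m$. This is false: with the CW-structure consisting of the $0$-cell $\{p\}$ and a single $2$-cell, the antipodal map sends $p$ to $-p$, which lies in the interior of the $2$-cell, so it is not cellular and $S_\ba$ is not a subcomplex of $(\mathbb S^2)^m$ whenever $\ba$ has a cup. Consequently the claim that $Y$ and $\mathrm{Sk}_{2(m-k)}^m$ ``have literally the same set of cells inside $(\mathbb S^2)^m$'' cannot be right; if two subcomplexes of a CW-complex had the same cells they would be equal as spaces (the attaching maps of a subcomplex are inherited from the ambient complex), contradicting Example~\ref{ex:homeo_vs_homotopy}, which shows $\mathcal S^{((k),(m-k))}$ and $\mathrm{Sk}_{2(m-k)}^m$ are generally not even homeomorphic. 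Your subsequent ``realizability of subsets'' lemma identifies which product cells are contained in $Y$ only in the case where the diagram has no cups, and does not account for the constraint cells coming from cup relations.

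The paper's proof has to confront exactly this failure: because the inclusion $\psi_\ba\colon S_\ba\hookrightarrow(\mathbb S^2)^m$ is not cellular, a basis cell of $H_*(S_\ba)$ maps to a signed sum of line diagrams $L_M$ rather than to a single product cell (Lemma~\ref{lem:image_of_homology_gen}), and one then needs the combinatorial independence of the $L_M$ over standard enriched cup diagrams (Lemma~\ref{lem:linear_independence}) to conclude that the inclusion $\mathcal S^{((k),(m-k))}\hookrightarrow(\mathbb S^2)^m$ induces an isomorphism $H_{2l}(\mathcal S^{((k),(m-k))})\cong H_{2l}((\mathbb S^2)^m)$ for $l\le m-k$ (Proposition~\ref{prop:injection}). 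Only with this homology computation in hand does cellular approximation produce a map $j\colon\mathcal S^{((k),(m-k))}\to\mathrm{Sk}_{2(m-k)}^m$ that is a homology isomorphism in all degrees, so the homological Whitehead theorem applies. You correctly anticipate using cellular approximation and Whitehead, but the crucial input---the diagrammatic computation of the image of the fundamental classes of the $S_\ba$ in $H_*((\mathbb S^2)^m)$---is absent, and the cell-matching argument you propose in its place does not work.
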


Even in the case of one-row bipartitions, the type C Weyl group does not act on the exotic Springer fiber (and neither on its topological model from Theorem~\ref{introthm:top_model}). However, it acts on the homotopy equivalent space $\mathrm{Sk}_{2(m-k)}^m$ in a natural way and induces an action on cohomology which can be described explicitly (see Proposition~\ref{prop:Weyl_group_action_cohomology}). We prove that the resulting representation $H^{2l}(\mathrm{Sk}_{2(m-k)}^m,\mathbb C)$ is isomorphic to the irreducible Specht module of the type C Weyl group labeled by $((m-l),(l))$, $0\leq l \leq m-k$. In particular, the top degree representation is as expected from the exotic Springer correspondence~\cite{Kat11}. In fact, by using results from~\cite{Kat17}, it follows that our representations coincide with the ones constructed by Kato in all cohomological degrees. Furthermore, our explicit presentation of $H^*(\mathrm{Sk}_{2(m-k)}^m,\mathbb C)$ comes with a distinguished basis of $H^{2l}(\mathrm{Sk}_{2(m-k)}^m,\mathbb C)$ given by monomials with $l$ pairwise different factors. We show that this monomial basis corresponds to the well known bipolytabloid basis indexed by standard bitableaux of the Specht module, see~\cite{Can96}. It would be interesting to extend the construction of homotopy types which admit an action of the Weyl group to more general Springer fibers.

Finally, we describe the structure of the pairwise intersections of the irreducible components of the exotic Springer fiber $\mathcal Fl^{((k),(m-k))}$ using circle diagrams (see Theorem~\ref{thm:intersection_of_components}). A circle diagram is obtained by putting the cup diagram assigned to a component upside down on top of the diagram associated to another component. We can then consider the vector space $\mathbb K_m$ whose basis is given by all such oriented circle diagrams and obtain the following result. 

\begin{introthm} 
There is an isomorphism of vector spaces 
\begin{equation}\label{eq:convolution_algebra_iso}
\mathbb K_m \cong \bigoplus_{(\ba,\bb)\in\left(\mathbb B^{((k),(m-k))}\right)^2} H^*\left(K_\ba\cap K_\bb,\mathbb C\right),
\end{equation}
where $K_\ba$ and $K_\bb$ are the irreducible components of $\mathcal Fl^{((k),(m-k))}$ labeled by the cup diagrams $\ba,\bb\in\mathbb B^{((k),(m-k))}$, respectively.
\end{introthm}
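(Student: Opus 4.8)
**Proof proposal for the final theorem (the isomorphism $\mathbb{K}_m \cong \bigoplus H^*(K_{\mathbf a} \cap K_{\mathbf b}, \mathbb C)$).**

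The plan is to reduce the statement to a dimension count on each summand, using the topological model from Theorem~\ref{introthm:top_model} together with the description of pairwise intersections of components via circle diagrams (Theorem~\ref{thm:intersection_of_components}). First I would fix $\mathbf a, \mathbf b \in \mathbb B^{((k),(m-k))}$ and form the circle diagram $\overline{\mathbf a}\mathbf b$ obtained by flipping $\mathbf a$ upside down on top of $\mathbf b$. Under the homeomorphism of Theorem~\ref{introthm:top_model}, $K_{\mathbf a}$ and $K_{\mathbf b}$ correspond to the submanifolds $S_{\mathbf a}, S_{\mathbf b} \subseteq (\mathbb S^2)^m$, so $K_{\mathbf a} \cap K_{\mathbf b} \cong S_{\mathbf a} \cap S_{\mathbf b}$. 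The latter is cut out inside $(\mathbb S^2)^m$ by the combined set of antipodal relations $x_i = -x_j$ (from cups) and point relations $x_i = p$ (from rays) coming from \emph{both} diagrams; I would analyze how the connected components of the circle diagram $\overline{\mathbf a}\mathbf b$ organize these relations. Each circle (closed loop) in the diagram forces the associated $\mathbb S^2$-coordinates to be determined by a single free coordinate up to a sign, so it contributes a factor $\mathbb S^2$ to the intersection precisely when the loop is a plain circle, and contributes a single point when the loop is "pinned" to the north pole by passing through a ray or (in the exotic setting) when the boundary structure forces the coordinates to be $p$; lines in the circle diagram that terminate on the boundary likewise contribute points. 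The upshot should be a homeomorphism $S_{\mathbf a} \cap S_{\mathbf b} \cong (\mathbb S^2)^{c(\mathbf a, \mathbf b)}$, where $c(\mathbf a, \mathbf b)$ is the number of "free" circles in $\overline{\mathbf a}\mathbf b$.

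Given this, $H^*(K_{\mathbf a} \cap K_{\mathbf b}, \mathbb C) \cong H^*((\mathbb S^2)^{c(\mathbf a, \mathbf b)}, \mathbb C)$, which has total dimension $2^{c(\mathbf a, \mathbf b)}$, with a basis indexed by subsets of the $c(\mathbf a, \mathbf b)$ circles (a generator in degree $0$ or $2$ for each circle). On the combinatorial side, the vector space $\mathbb K_m$ has as its basis all \emph{oriented} circle diagrams, i.e.\ circle diagrams $\overline{\mathbf a}\mathbf b$ together with an admissible orientation of each loop. The key bookkeeping step is to check that the admissible orientations of $\overline{\mathbf a}\mathbf b$ are in bijection with subsets of its free circles: each free circle can be oriented in two ways (contributing the factor $2$), while each pinned circle and each line admits exactly one admissible orientation. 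This matches $2^{c(\mathbf a, \mathbf b)} = \dim H^*((\mathbb S^2)^{c(\mathbf a, \mathbf b)}, \mathbb C)$ term by term, and summing over all pairs $(\mathbf a, \mathbf b) \in (\mathbb B^{((k),(m-k))})^2$ gives the desired vector space isomorphism \eqref{eq:convolution_algebra_iso}. One should make the isomorphism explicit by sending an oriented circle diagram to the tensor product of the generators (in degree $0$ for a circle oriented one way, degree $2$ for the other) of the corresponding copies of $H^*(\mathbb S^2)$, so that the match is natural rather than just a numerical coincidence.

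The main obstacle is the careful analysis of $S_{\mathbf a} \cap S_{\mathbf b}$ in the presence of half-cups and the boundary: I need to determine exactly which connected components of the circle diagram become "pinned" to the north pole $p$, since coordinates indexed by vertices on half-cups carry no relations individually, but when a half-cup of $\mathbf a$ and a half-cup of $\mathbf b$ (or a chain of cups linking them) are composed, the resulting line in the circle diagram may or may not propagate a forced value. This requires a precise case analysis of the possible shapes of connected components in a one-boundary circle diagram — plain circles, circles through rays, and lines connecting boundary points — and a verification that the "line" components indeed collapse the relevant coordinates to the single point $p$, consistent with the convention that admissible orientations assign lines a unique orientation. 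Once this local picture is pinned down, the global statement follows by taking products over connected components and direct sums over pairs of diagrams, with the remaining verifications being routine.
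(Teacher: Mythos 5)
Your overall strategy --- pass through the homeomorphism of Theorem~\ref{introthm:top_model} to identify $K_\ba\cap K_\bb$ with $S_\ba\cap S_\bb$, analyze that intersection component-by-component in the circle diagram $\overline{\ba}\bb$, and then match the resulting $2^c$-dimensional cohomology with the set of admissible orientations of $\overline{\ba}\bb$ --- is the same route the paper takes, via Theorem~\ref{thm:intersection_of_components}, Corollary~\ref{cor:filtration}, and the discussion of oriented circle diagrams at the end of Section~\ref{sec:torus_fixed_points}.

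There is, however, a concrete error in the middle of your argument, precisely at the step you flag as ``the main obstacle.'' You assert that ``lines in the circle diagram that terminate on the boundary likewise contribute points'' and, correspondingly, that ``each line admits exactly one admissible orientation.'' Both claims are false for line segments of $\overline{\ba}\bb$ whose two endpoints are both half-cups, and this is exactly where the one-boundary setting departs from the familiar type~A picture. A half-cup imposes no relation on the associated $\mathbb S^2$-coordinate, in $S_\ba$ or in $S_\bb$. Thus a connected component of $\overline{\ba}\bb$ whose two endpoints are half-cups carries no $x_i=p$ constraint at all: its coordinates are only tied together by the $\pm$-flips along its cups, and any one of them may be chosen freely. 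Such a line contributes a full factor of $\mathbb S^2$ to $S_\ba\cap S_\bb$, not a point, and admits two admissible orientations (clockwise and counterclockwise), not one. This is precisely what Theorem~\ref{thm:intersection_of_components} records: the exponent $k$ is the number of \emph{circles plus line segments containing two half-cups}. Only lines containing a ray are pinned (to $\pm p$), and only lines containing two rays can render the intersection empty, namely when they are non-propagating. (Incidentally, there is no such thing as a ``pinned circle'' in this setup: a circle is by definition a closed component built entirely from cups, so it never meets a ray or a half-cup, and it always contributes a free $\mathbb S^2$.) With the count corrected in this way, your $2^k$ matches the number of admissible orientations of $\overline{\ba}\bb$, and the explicit map you propose --- sending an oriented circle diagram to the corresponding elementary tensor in $H^*\bigl((\mathbb S^2)^k,\mathbb C\bigr)\cong\bigl(\mathbb C[X]/(X^2)\bigr)^{\otimes k}$, with one tensor factor per circle or per two-half-cup line --- is exactly the isomorphism constructed in the paper.
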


The right-hand side of~(\ref{eq:convolution_algebra_iso}) can be equipped with a convolution product by mimicking the construction in~\cite{SW12}. This product should have a diagrammatic description in terms of the one-boundary cup diagram combinatorics on the left-hand side of~(\ref{eq:convolution_algebra_iso}). For two-row Springer fibers of type A such a diagrammatic description was obtained in~\cite{SW12}. This gives a geometric construction of the (generalized) arc algebras of type A (including their quasi-hereditary covers) which were combinatorially defined in~\cite{Str09},\cite{BS11} (see also~\cite{CK14}) based on \cite{Kho02}. These algebras do not only appear in low-dimensional topology in the context of defining tangle homology theories, but they are known to be related to many interesting representation theoretic categories such as parabolic category $\mathcal O$,~\cite{BS11categoryO}, perverse sheaves on Grassmannians,~\cite{Str09}, non-semisimple representation theory of the Brauer algebra,~\cite{BS12brauer}, and the general linear Lie superalgebra,~\cite{BS12}. Our article can be seen as providing a geometric framework for constructing similar convolution algebras for exotic Springer fibers using one-boundary cup diagrams. However, establishing relationships between interesting representation theoretic categories comparable to the aforementioned results in type A is expected to be nontrivial in the exotic case. In particular, since the resolution of the exotic nilpotent cone is not symplectic, but only Calabi--Yau, the involved categories would necessarily have very different properties from the ones arising from the type A convolution algebras. Nonetheless, based on, e.g.,~\cite{tD94},~\cite{OR07}, we expect that the exotic arc algebras should play a role in defining homological invariants of tangles in a thickened annulus which should be compared to the constructions in~\cite{APS04},~\cite{GLW18} and~\cite{RT19}. Moreover, they should play a role in categorifying certain induced modules for multiparameter Hecke algebras of type C (at least for a very specific choice of parameters). Furthermore, it would be interesting to see how our geometry relates to the Schur--Weyl dualities recently studied in, e.g.,~\cite{BWW18} or~\cite{LV20}. 

\subsection*{Acknowledgments}

The authors would like to thank Nora Ganter, Anthony Henderson, Vinoth Nandakumar, Arun Ram and Catharina Stroppel for useful discussions and comments. We are particularly grateful to Maud De Visscher for inviting A.W.\ to visit City, University of London, where this project began.

Parts of this research were completed during the Junior Hausdorff Trimester Program ``Symplectic Geometry and Representation Theory''  at the Hausdorff Research Insitute (HIM) in Bonn. A.W.\ would like to thank the HIM for providing an excellent research environment. A.W.\ was also partially supported by the ARC Discovery Grant DP160104912 ``Subtle Symmetries and the Refined Monster''.

The authors would like to thank two anonymous referees for very detailed and helpful comments.

\section{Exotic Springer fibers}\label{sec:basic_defs}

We begin by recalling some basic definitions and facts concerning exotic Springer fibers and their irreducible components. Moreover, we define the one-boundary cup diagrams which are the central diagrammatic tool in this article. 

\subsection{Exotic nilpotent cone and exotic Springer fibers}

Throughout this article, we fix a positive integer $m>0$. Let $V$ be a $2m$-dimensional complex vector space with basis $e_1,\ldots,e_m,f_1,\ldots,f_m$ and symplectic form $\omega$ given by $\omega(e_i,f_j)=\delta_{i+j,m+1}=-\omega(f_j,e_i)$ and $\omega(e_i,e_j)=\omega(f_i,f_j)=0$. Let $\mathrm{Sp}(V,\omega)$ be the symplectic group of all linear automorphisms of $V$ preserving $\omega$ and let $\mathfrak{sp}(V,\omega)$ be its Lie algebra, i.e., the Lie subalgebra of the general linear Lie algebra $\mathfrak{gl}(V)$ consisting of all linear endomorphisms $x$ of $V$ such that $\omega(xv,w)=-\omega(v,xw)$ for all $v,w\in V$. The adjoint action of $\mathrm{Sp}(V,\omega)$ on $\mathfrak{gl}(V)$ yields a direct sum decomposition $\mathfrak{gl}(V)=\mathfrak{sp}(V,\omega)\oplus\mathcal S(V,\omega)$ of $\mathrm{Sp}(V,\omega)$-modules, where
\[
\mathcal S(V,\omega)=\big\{x\in\mathfrak{gl}(V)\mid \omega(xv,w)-\omega(v,xw)=0,\,\,\forall v,w\in V\big\}.
\]

\begin{defi}
The {\it exotic nilpotent cone} is the affine variety defined as 
\[
\mathfrak{N}=V\times\left(\mathcal S(V,\omega)\cap\mathcal N(\mathfrak{gl}(V))\right),\] 
where $\mathcal N(\mathfrak{gl}(V))\subseteq\mathfrak{gl}(V)$ is the ordinary nilpotent cone consisting of all nilpotent (in the usual sense of linear algebra) linear endomorphisms of $V$.
\end{defi}

Given a partition $\lambda$, we write $\vert\lambda\vert$ to denote the sum $\displaystyle \sum_{i \geq 1} \lambda_i$ of its parts and $\ell(\lambda)$ denotes the number of parts. Moreover, $\lambda+\mu$ is the partition defined by $(\lambda+\mu)_i=\lambda_i+\mu_i$ and $\lambda\cup\mu$ is the partition obtained by taking both the parts of $\lambda$ and $\mu$ and order them such that $\lambda\cup\mu$ is a partition. A {\it bipartition} of $m$ is a pair $(\lambda,\mu)$ of partitions such that $\vert\lambda\vert+\vert\mu\vert=m$. 

\begin{prop}[{\cite[Theorem 6.1]{AH08}}] \label{prop:classification_nilpotent_orbits}
The orbits under the $\mathrm{Sp}(V,\omega)$-action on $\mathfrak{N}$ are in bijective correspondence with bipartitions of $m$. 
\end{prop}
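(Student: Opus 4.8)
The plan is to reduce the classification of $\mathrm{Sp}(V,\omega)$-orbits on $\mathfrak{N} = V \times (\mathcal{S}(V,\omega) \cap \mathcal{N}(\mathfrak{gl}(V)))$ to a normal form problem for pairs $(v,x)$ consisting of a vector and a nilpotent operator satisfying a compatibility condition with $\omega$, and then match the resulting invariants with bipartitions. First I would unwind the definition of $\mathcal{S}(V,\omega)$: the condition $\omega(xv,w) = \omega(v,xw)$ says precisely that $x$ is \emph{self-adjoint} with respect to $\omega$, in contrast to the skew-adjoint elements that constitute $\mathfrak{sp}(V,\omega)$. The key structural observation is that if $x$ is $\omega$-self-adjoint and we set $\langle v, w \rangle := \omega(v, xw)$, then $\langle\,,\rangle$ is a \emph{symmetric} bilinear form (possibly degenerate) on $V$ which is $x$-invariant in the appropriate sense; more usefully, one can pass to the operator $x$ acting on $V$ together with the datum of $\omega$, and decompose $V$ into $x$-cyclic pieces compatible with $\omega$. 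This is the standard approach: classify the pair $(V, \omega, x)$ up to isometry first, which gives the Jordan-type data of a nilpotent self-adjoint operator on a symplectic space, and then account for the extra vector $v$.

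The second step is the orbit count. For the operator part alone, a nilpotent $\omega$-self-adjoint operator on a $2m$-dimensional symplectic space decomposes (after choosing a suitable $\omega$-orthogonal direct sum) into cyclic blocks, and the symplectic form pairs blocks in a controlled way: one gets pairs of Jordan blocks of equal size paired with each other by $\omega$, plus possibly single blocks carrying a form. Combined with the marked vector $v$, which can be normalized block-by-block (in each relevant cyclic subspace one records how far up the Jordan chain $v$ reaches), the combined invariant naturally splits into two partitions — roughly, one partition $\lambda$ recording the blocks ``seen'' by $v$ and one partition $\mu$ recording the remaining blocks — with $|\lambda| + |\mu| = m$. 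I would make this precise by an explicit inductive normal-form argument: peel off the largest Jordan block, use the $\mathrm{Sp}(V,\omega)$-action to bring $(v,x)$ restricted to an invariant complement into the inductively assumed normal form, and check that the discrete data so extracted is a complete invariant. Surjectivity onto all bipartitions of $m$ is then immediate by writing down, for each $(\lambda,\mu)$, an explicit representative $(v_0, x_0)$ in the basis $e_1,\dots,e_m,f_1,\dots,f_m$.

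The main obstacle I anticipate is the bookkeeping in the normal-form reduction: one must verify that the stabilizer in $\mathrm{Sp}(V,\omega)$ acts transitively on the set of marked vectors $v$ giving the same discrete invariant, which requires a careful analysis of how symplectic transformations preserving $x$ can move $v$, and one must confirm that the two partitions are genuinely independent (no hidden relations beyond $|\lambda|+|\mu|=m$). Since this is Theorem 6.1 of \cite{AH08}, I would in practice cite that result directly rather than reproduce the full combinatorial argument; the explicit bijection — including which bipartition corresponds to which normal form — is exactly what is recorded there and what we use in the sequel. For the present paper it suffices to invoke \cite{AH08} and fix, once and for all, the representative $(v,x)$ attached to the one-row bipartition $((k),(m-k))$, which is the only case we need downstream.
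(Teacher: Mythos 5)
The paper gives no proof of this proposition; it is stated precisely as a citation of~\cite[Theorem 6.1]{AH08}, and the explicit representative for a given bipartition is then recorded in Remark~\ref{rem:explicit_AH_bijection} and the subsequent lemma. Your conclusion — invoke Achar--Henderson and fix the representative for $((k),(m-k))$ — is exactly what the paper does, so your approach matches. One small correction in your sketch, though it is not load-bearing: for $x$ self-adjoint with respect to the \emph{skew}-symmetric form $\omega$, the induced form $\langle v,w\rangle:=\omega(v,xw)$ is again skew-symmetric, not symmetric, since $\omega(w,xv)=\omega(xw,v)=-\omega(v,xw)$.
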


\begin{rem} \label{rem:explicit_AH_bijection}
It follows from Section $2$ and Section $6$ of \cite{AH08} (see also \cite[Theorem 2.10]{NRS17}) that a point $(v,x)\in\mathfrak{N}$ is contained in the orbit labeled by the bipartition $(\lambda,\mu)$ of $m$ if and only if there is a basis of $V$ given by
\[
\big\{e_{ij},f_{ij}\mid 1\leq i\leq \ell(\lambda+\mu),1\leq j\leq\lambda_i+\mu_i\big\},
\] 
where $\omega(e_{ij},f_{i'j'})=\delta_{i,i'}\delta_{j+j',\lambda_i+\mu_i+1}$, $v=\displaystyle \sum_{i=1}^{\ell(\lambda)}e_{i,\lambda_i}$ and such that the action of $x$ on this basis is as follows:
\[
xe_{ij}=\begin{cases}
e_{i,j-1} &\text{if }j\geq 2\\
0					&\text{if }j=1
\end{cases}
 \hspace{1.6em} xf_{ij}=\begin{cases}
f_{i,j-1} &\text{if }j\geq 2\\
0					&\text{if }j=1.
\end{cases}
\]
Note that $x$ has Jordan type $(\lambda+\mu)\cup(\lambda+\mu)$. 
\end{rem}

By Remark~\ref{rem:explicit_AH_bijection} we obtain the following distinguished representatives of orbits corresponding to one-row bipartitions.

\begin{lem}
Let $x\in\mathcal{N}(\mathfrak{gl}(V))$ be the nilpotent endomorphism of $V$ whose action on $V$ is given as follows:
\begin{equation}\label{eq:Jordan_basis_vectors}
\begin{tikzpicture}[baseline={(0,0)}]
\node (e1) at (0,0) {$e_1$};
\node (e2) at (1,0) {$e_2$};
\node (space1) at (2,0) {$\ldots{}^{}$};
\node (eN) at (3.1,0) {$e_m$};
\node (f1) at (5,0) {$f_1$};
\node (f2) at (6,0) {$f_2$};
\node (space2) at (7,0) {$\ldots{}^{}$};
\node (fN) at (8.1,0) {$f_m$.};
\path[->,font=\scriptsize,>=angle 90,bend right]
(e2) edge (e1)
(space1) edge (e2)
(eN) edge (space1)
(f2) edge (f1)
(space2) edge (f2)
(fN) edge (space2);
\end{tikzpicture}
\end{equation}
The vectors $e_1$ and $f_1$ are sent to $0$. Then the point $(e_k,x)\in\mathfrak{N}$, where $e_k\in V$ is the $k$th Jordan basis vector of the first block in (\ref{eq:Jordan_basis_vectors}), is contained in the orbit labeled by the bipartition $((k),(m-k))$.
\end{lem}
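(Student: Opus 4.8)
The plan is to read the claim off directly from the explicit bijection recorded in Remark~\ref{rem:explicit_AH_bijection}. That remark characterises, for a point $(v,x)\in\mathfrak N$, the bipartition labelling its $\mathrm{Sp}(V,\omega)$-orbit in terms of the existence of a suitably normalised basis of $V$. So the task is twofold: first confirm that $(e_k,x)$ genuinely lies in $\mathfrak N$, and then exhibit the required normalised basis for the bipartition $((k),(m-k))$.

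For the first step I would observe that $x$ is patently nilpotent, consisting of two Jordan blocks each of size $m$ (the $e$-block and the $f$-block), so its Jordan type is $(m,m)$; reassuringly this is exactly $(\lambda+\mu)\cup(\lambda+\mu)$ for $(\lambda,\mu)=((k),(m-k))$, which is the Jordan type predicted by Remark~\ref{rem:explicit_AH_bijection}. It then remains to check $x\in\mathcal S(V,\omega)$, i.e.\ $\omega(xu,w)=\omega(u,xw)$ for all $u,w$. This reduces to basis vectors: both sides vanish on pairs $(e_i,e_j)$ and $(f_i,f_j)$, while on a pair $(e_i,f_j)$ one finds $\omega(xe_i,f_j)=\omega(e_{i-1},f_j)=\delta_{i+j,\,m+2}=\omega(e_i,f_{j-1})=\omega(e_i,xf_j)$, with the convention $e_0=f_0=0$ handling the boundary indices (for which both sides are zero, since $1\le i,j\le m$). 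Hence $x\in\mathcal S(V,\omega)\cap\mathcal N(\mathfrak{gl}(V))$ and $(e_k,x)\in\mathfrak N$.

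For the second step, specialise Remark~\ref{rem:explicit_AH_bijection} to $(\lambda,\mu)=((k),(m-k))$. Then $\lambda+\mu=(m)$, so $\ell(\lambda+\mu)=1$ and $\lambda_1+\mu_1=m$, and the normalised basis it demands is indexed by $\{e_{1j},f_{1j}\mid 1\le j\le m\}$. I claim the tautological choice $e_{1j}:=e_j$, $f_{1j}:=f_j$ works: the prescribed action $xe_{1j}=e_{1,j-1}$, $xf_{1j}=f_{1,j-1}$ (and $0$ when $j=1$) is literally how $x$ acts on $e_j,f_j$ by construction; the prescribed pairing $\omega(e_{1j},f_{1j'})=\delta_{j+j',\,\lambda_1+\mu_1+1}=\delta_{j+j',\,m+1}$ together with isotropy of the $e$'s and of the $f$'s is precisely the fixed form $\omega$ restricted to these vectors; and $v=e_k$ agrees with $\sum_{i=1}^{\ell(\lambda)}e_{i,\lambda_i}$, which for $\lambda=(k)$ is the single term $e_{1,k}=e_k$ (and the empty sum $0$ when $k=0$, matching the convention $e_0=0$). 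By Remark~\ref{rem:explicit_AH_bijection}, $(e_k,x)$ therefore lies in the orbit labelled by $((k),(m-k))$.

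I do not expect a genuine obstacle here: the argument is essentially a bookkeeping verification against Remark~\ref{rem:explicit_AH_bijection}. The points that need a moment's attention are keeping the two index conventions apart — the abstract labels $e_{ij},f_{ij}$ of the remark versus the fixed standard basis $e_j,f_j$, together with the offset in $\omega(e_i,f_j)=\delta_{i+j,m+1}$ — and the membership $x\in\mathcal S(V,\omega)$, which is the one non-formal input and is settled by the short computation above. The degenerate endpoints $k=0$ (empty $\lambda$, so $v=0$) and $k=m$ (empty $\mu$) are covered by the same formulas, and the Jordan-type comparison furnishes an independent sanity check.
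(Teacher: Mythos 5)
Your proof is correct and follows the same route as the paper, which introduces this Lemma with the words ``By Remark~\ref{rem:explicit_AH_bijection} we obtain the following distinguished representatives'' and treats it as a direct specialisation of that remark to $(\lambda,\mu)=((k),(m-k))$. You have simply made explicit the two bookkeeping checks the paper leaves implicit (that $x\in\mathcal S(V,\omega)\cap\mathcal N(\mathfrak{gl}(V))$, and that the standard basis realises the normalised basis demanded by the remark), and both verifications are accurate.
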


Let $\mathcal Fl_C$ be the flag variety of type $C$ consisting of all flags $\{0\}\subsetneq F_1\subsetneq F_2\subsetneq\ldots\subsetneq F_m\subsetneq V$ such that $\dim_{\mathbb C}(F_i)=i$ and $F_i\subseteq V$ is isotropic with respect to $\omega$, $1\leq i\leq m$. The exotic nilpotent cone $\mathfrak{N}$ has a resolution of singularities $\pi\colon\widetilde{\mathfrak{N}}\to\mathfrak{N}$, where
\[
\widetilde{\mathfrak{N}} = \big\{\left((v,x),(F_1,\ldots,F_m)\right)\in\mathfrak{N}\times\mathcal Fl_C\mid xF_i\subseteq F_{i-1}\,,v\in F_m\big\}
\]
and $\pi$ is the projection onto the first component,~\cite{Kat11}. Analogous to the ordinary Springer fibers, we can now define the exotic Springer fibers as the fibers of elements $(v,x)\in\mathfrak{N}$ under the resolution $\pi$. 

\begin{defi} \label{defi_exotic_springer_fiber}
Given $(v,x)\in\mathfrak{N}$, the {\it exotic Springer fiber} $\mathcal Fl_{(v,x)}$ is the variety consisting of all flags $\mathcal Fl_C$ satisfying $v\in F_m$ and $xF_i\subseteq F_{i-1}$. 
\end{defi}

Up to isomorphism of algebraic varieties, the exotic Springer fiber only depends on the $\mathrm{Sp}(V,\omega)$-orbit of $(v,x)$ in $\mathfrak{N}$ and not on the chosen point itself. This allows us to talk about the $(\lambda,\mu)$-Springer fiber denoted by $\mathcal Fl^{(\lambda,\mu)}$ without ambiguity.

\subsection{Classification of irreducible components} \label{sec:irred_comp}

Let $(\lambda,\mu)$ be a bipartition of $m$. A bitableau of shape $(\lambda,\mu)$ is a filling of the $m$ boxes of the underlying pair of Young diagrams with the numbers $1,2,\ldots,m$ such that each number appears exactly once. A bitableau is called standard if the entries increase (to the right and downwards) along rows and columns. Note that this convention differs from the one used in~\cite{NRS16},~\cite{NRS17}. 

\begin{prop}[{\cite[Theorem 2.12]{NRS16}}] \label{prop:parametrization_of_components}
There is a bijection between the irreducible components of the $(\lambda,\mu)$ exotic Springer fiber and the set of all standard bitableaux of shape $(\lambda,\mu)$. Moreover, each irreducible component has the same dimension, which is $2M(\lambda+\mu)+\vert\mu\vert$. Here, $M(\lambda+\mu)=\displaystyle \sum_{i \geq 1} (i-1)(\lambda_i + \mu_i)$ is the dimension of the Springer fiber of type $A$ corresponding to $\lambda+\mu$. 
\end{prop}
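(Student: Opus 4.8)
The plan is to prove the dimension formula, equidimensionality, and the parametrization by standard bitableaux \emph{simultaneously}, by induction on $m$, through a Spaltenstein-type analysis of the smallest nonzero subspace $F_1$ of a flag; Kato's exotic Springer correspondence then serves as an independent check on the resulting component count. The base case $m=1$ is immediate: $\mathcal Fl^{((1),\emptyset)}$ is a point while $\mathcal Fl^{(\emptyset,(1))}\cong\mathbb P^1$, in agreement with $\dim\mathcal Fl^{(\lambda,\mu)}=2M(\lambda+\mu)+|\mu|$ and the unique standard bitableau in each case. For the inductive step, fix $(v,x)\in\mathfrak N$ in the normal form of Remark~\ref{rem:explicit_AH_bijection}. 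For $F_\bullet\in\mathcal Fl^{(\lambda,\mu)}$ the relation $xF_1\subseteq F_0=\{0\}$ forces $F_1\subseteq\ker x$; the line $F_1$ is automatically isotropic, and $F_1\subseteq F_m$ with $F_m$ isotropic forces $v\in F_1^{\perp}$. Since $x\in\mathcal S(V,\omega)$ is $\omega$-self-adjoint and annihilates $F_1$, it preserves $F_1^{\perp}$ and descends to a nilpotent $\omega$-self-adjoint endomorphism $\bar x$ of the $2(m-1)$-dimensional symplectic space $W:=F_1^{\perp}/F_1$; together with the image $\bar v$ of $v$, this gives a point of the exotic nilpotent cone of $W$.

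\textbf{The recursion.} The assignment $F_\bullet\mapsto\bigl(F_1,\,(F_2/F_1\subsetneq\dots\subsetneq F_m/F_1)\bigr)$ defines a surjective morphism from $\mathcal Fl^{(\lambda,\mu)}$ onto the projective variety $Z$ of those lines $F_1\subseteq\ker x$ that occur as the first subspace of some flag. The key assertion --- which I would establish by an explicit computation in the Jordan basis --- is that $Z$ decomposes into finitely many locally closed strata $Z_\beta$, one for each removable box $\beta$ of $\lambda$ and each removable box $\beta$ of $\mu$, with the following properties: along $Z_\beta$ the pair $(\bar v,\bar x)$ has constant orbit type, labelled by the bipartition $(\lambda^\beta,\mu^\beta)$ obtained from $(\lambda,\mu)$ by deleting $\beta$; the morphism above restricts over $Z_\beta$ to a fibre bundle with fibre $\mathcal Fl^{(\lambda^\beta,\mu^\beta)}$; and $\dim Z_\beta=2(r-1)$ if $\beta$ lies in row $r$ of $\lambda$, while $\dim Z_\beta=2(r-1)+1$ if $\beta$ lies in row $r$ of $\mu$. (For $(\lambda,\mu)=((k),(m-k))$ with $0<k<m$ one gets exactly two strata: the point $\{\langle e_1\rangle\}$, over which the fibre is $\mathcal Fl^{((k-1),(m-k))}$, and its complement $\mathbb A^1\subseteq\mathbb P^1$, over which the fibre is $\mathcal Fl^{((k),(m-k-1))}$.)

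\textbf{Induction step.} By the inductive hypothesis each $\mathcal Fl^{(\lambda^\beta,\mu^\beta)}$ is equidimensional of dimension $2M(\lambda^\beta+\mu^\beta)+|\mu^\beta|$, with irreducible components indexed by the standard bitableaux of shape $(\lambda^\beta,\mu^\beta)$. Removing a box from row $r$ of $\lambda$ decreases $2M(\lambda+\mu)+|\mu|$ by $2(r-1)$, and removing a box from row $r$ of $\mu$ decreases it by $2(r-1)+1$; comparing with the values of $\dim Z_\beta$ shows that the total space over \emph{every} stratum $Z_\beta$ has dimension exactly $2M(\lambda+\mu)+|\mu|$. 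Hence $\mathcal Fl^{(\lambda,\mu)}$ is equidimensional of that dimension; the closures of the total spaces over distinct strata are pairwise distinct; and --- since a fibre bundle over an irreducible base has exactly as many irreducible components as its fibre --- the components of $\mathcal Fl^{(\lambda,\mu)}$ are in bijection with $\bigsqcup_\beta\{\text{standard bitableaux of shape }(\lambda^\beta,\mu^\beta)\}$. Reinserting the box $\beta$ as the entry $m$, and leaving the entries $1,\dots,m-1$ in place, matches this disjoint union with the set of standard bitableaux of shape $(\lambda,\mu)$, which closes the induction.

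\textbf{Cross-check and main obstacle.} As an independent verification of the count, equidimensionality gives $\dim_{\mathbb C}H^{2d}(\mathcal Fl^{(\lambda,\mu)},\mathbb C)=\#\{\text{irreducible components of }\mathcal Fl^{(\lambda,\mu)}\}$ for $d=2M(\lambda+\mu)+|\mu|$, while Kato's exotic Springer correspondence~\cite{Kat11} identifies this top cohomology with the irreducible module of the type $C$ Weyl group labelled by $(\lambda,\mu)$, whose dimension equals the number of standard bitableaux of shape $(\lambda,\mu)$. I expect the main obstacle to be the bookkeeping underlying the recursion: in contrast to type $A$, one must track both the vector $v$ and the symplectic form through the quotient $F_1^{\perp}/F_1$, so the stratification of $Z$ must be described explicitly --- distinguishing lines by which $x$-invariant subspaces they meet and by how $v$ lies relative to $F_1$ and $F_1^{\perp}$ --- and one must check \emph{on the nose} that $\dim Z_\beta$ equals the drop in $2M(\bullet)+|\bullet|$, so that no stratum produces an irreducible component of strictly smaller dimension, and that each stratum induces exactly the claimed box removal. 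An alternative route to equidimensionality and the fibre dimension would be via semismallness of $\pi$ together with the relevance of every $\mathrm{Sp}(V,\omega)$-orbit, but verifying these for the non-symplectic resolution $\pi$ appears comparably delicate.
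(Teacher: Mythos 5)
The paper does not actually prove this proposition: it is quoted verbatim as \cite[Theorem~2.12]{NRS16}, so there is no in-paper proof to compare against. Your sketch outlines a Spaltenstein-type recursion on $F_1\subseteq\ker x$, passing to the symplectic subquotient $F_1^{\perp}/F_1$, which is the standard technique for parametrizing irreducible components of Springer-type fibres and is essentially the route taken in \cite{NRS16}. Your dimension bookkeeping is correct (removing a box from row $r$ of $\lambda$ drops $2M(\lambda+\mu)+|\mu|$ by $2(r-1)$, from row $r$ of $\mu$ by $2(r-1)+1$), and the one-row case is a useful sanity check.

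There is, however, a genuine gap: the central structural claim is asserted but not proved, and it is precisely the heart of the argument. You need to establish (i) that for \emph{every} line $F_1\subseteq\ker x\cap v^{\perp}$ the pair $(\bar v,\bar x)$ lands in an orbit labelled by $(\lambda^{\beta},\mu^{\beta})$ for a single removable box $\beta$ of $\lambda$ or of $\mu$ --- this is not automatic, since the Jordan type of $\bar x$ alone does not determine the bipartition, and the relative position of $\bar v$ has to be tracked through the passage to $F_1^{\perp}/F_1$; (ii) that each locus $Z_{\beta}$ is a locally closed irreducible subvariety of $Z$ of \emph{exactly} the claimed dimension, since if any $Z_{\beta}$ were too small or reducible, both equidimensionality and the component count would fail; and (iii) local triviality of the fibration over $Z_{\beta}$, which requires either an equivariance argument or an explicit trivialization. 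You flag these issues yourself as ``the main obstacle,'' which is honest, but that is exactly where the proof of \cite[Theorem~2.12]{NRS16} lives; as written, the proposal is a plan with a plausible recursion rather than a proof.
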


\subsection{One-boundary cup diagrams}

We finish this section by defining the one-boundary cup diagrams. They can be used as yet another combinatorial means to parametrize the irreducible components of the exotic Springer fibers in the case of one-row bipartitions (see Lemma~\ref{lem:bijection_tableaux_cups} and Remark~\ref{rem:dimension} below). The advantage of this parametrization is that it captures the topology and geometric structure of the irreducible components in a neat and very explicit way (see Subsection~\ref{subsec:irred_components} for details). For the remainder of this article we fix a one-row bipartition $((k),(m-k))$ of the positive integer $m$, $0\leq k\leq m$.

\begin{defi}\label{defi:one-boundary_cup_diagrams}
Consider a rectangle in the plane with $m$ vertices evenly spread along the upper horizontal edge. The vertices are labeled by the consecutive integers $1,\ldots,m$ in increasing order from left to right.

A {\it one-boundary cup diagram} (or shorter {\it cup diagram}) is obtained by either connecting two vertices by a lower semicircle called a {\it cup}, or by connecting a vertex with a point on the lower horizontal edge by a vertical line segment called a {\it ray}, or by connecting a vertex with a point on the right vertical edge of the rectangle by a {\it half-cup}. In doing so we require that the diagram is crossingless and every vertex is an endpoint of exactly one cup, ray or half-cup. If the cups, rays and half-cups of two given cup diagrams are incident with exactly the same vertices in both diagrams (regardless of the precise drawing of the cups, rays and half-cups) we consider the diagrams as equal. The notation $i\CupConnect j$ means that $i<j$ are connected by a cup and $\RayConnect$ means that $i$ is connected to a ray. For our purposes, we do not need a short notation for a vertex connected to a half-cup. 

We write $\mathbb B^{((k),(m-k))}$ to denote the set of all cup diagrams on $m$ vertices such that the number of cups plus the number of half-cups equals $m-k$.
\end{defi}

\begin{ex} \label{ex:cup_diagrams}
The set $\mathbb B^{((3),(1))}$ consists of the cup diagrams
\[
\begin{array}{cccc}
\ba = 
\begin{tikzpicture}[scale=0.8,xscale=1,yscale=1.2,baseline={(0,0.5)}]
\draw[dotted] (0.5,0) -- (0.5,1) -- (4.5,1) -- (4.5,0) -- cycle;
\node[above] at (1,1) {$1$};
\node[above] at (2,1) {$2$};
\node[above] at (3,1) {$3$};
\node[above] at (4,1) {$4$};
\foreach \x in {1,...,4}{
\fill (\x,1) circle (2pt);}

\draw[thick] (1,1) .. controls +(0,-0.5) and +(0,-0.5) .. (2,1);
\draw[thick] (3,1) -- +(0,-1);
\draw[thick] (4,1) -- +(0,-1);
\end{tikzpicture}
&,&
\bb = \begin{tikzpicture}[scale=0.8,xscale=1,yscale=1.2,baseline={(0,0.5)}]
\draw[dotted] (0.5,0) -- (0.5,1) -- (4.5,1) -- (4.5,0) -- cycle;
\node[above] at (1,1) {$1$};
\node[above] at (2,1) {$2$};
\node[above] at (3,1) {$3$};
\node[above] at (4,1) {$4$};
\foreach \x in {1,...,4}{
\fill (\x,1) circle (2pt);}

\draw[thick] (1,1) -- +(0,-1);
\draw[thick] (2,1) .. controls +(0,-0.5) and +(0,-0.5) .. (3,1);
\draw[thick] (4,1) -- +(0,-1);

\end{tikzpicture}
\\
{\bf c} =  \begin{tikzpicture}[scale=0.8,xscale=1,yscale=1.2,baseline={(0,0.5)}]
\draw[dotted] (0.5,0) -- (0.5,1) -- (4.5,1) -- (4.5,0) -- cycle;
\node[above] at (1,1) {$1$};
\node[above] at (2,1) {$2$};
\node[above] at (3,1) {$3$};
\node[above] at (4,1) {$4$};

\foreach \x in {1,...,4}{
\fill (\x,1) circle (2pt);}

\draw[thick] (1,1) -- +(0,-1);
\draw[thick] (3,1) .. controls +(0,-0.5) and +(0,-0.5) .. (4,1);
\draw[thick] (2,1) -- +(0,-1);
\end{tikzpicture}
&,&
{\bf d} =   \begin{tikzpicture}[scale=0.8,xscale=1,yscale=1.2,baseline={(0,0.5)}]
\draw[dotted] (0.5,0) -- (0.5,1) -- (4.5,1) -- (4.5,0) -- cycle;
\node[above] at (1,1) {$1$};
\node[above] at (2,1) {$2$};
\node[above] at (3,1) {$3$};
\node[above] at (4,1) {$4$};
\foreach \x in {1,...,4}{
\fill (\x,1) circle (2pt);}

\draw[thick] (1,1) -- +(0,-1);
\draw[thick] (2,1) -- +(0,-1);
\draw[thick] (3,1) -- +(0,-1);
\draw[thick] (4,1) to[out=270,in=180] (4.5,0.5);
\end{tikzpicture}
\end{array}
\]
In the following we will often omit to indicate the vertex labeling on the top horizontal edge of the rectangle.
\end{ex}


\begin{lem}\label{lem:bijection_tableaux_cups}
There exists a bijection between the set of standard one-row bitableaux of shape $((k),(m-k))$ and the set of cup diagrams $\mathbb B^{((k),(m-k))}$. The map is given by sending a bitableau to the cup diagram whose left endpoints of cups or endpoints of half-cups are the vertices whose labels are contained in the second Young tableau.   
\end{lem}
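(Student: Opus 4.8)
The plan is to exhibit explicit inverse maps between the two finite sets and check they are well defined and mutually inverse. A standard one-row bitableau of shape $((k),(m-k))$ is just a choice of which $m-k$ of the numbers $\{1,\ldots,m\}$ go into the (single-row) second tableau, the remaining $k$ going into the first, with entries increasing left to right in each row. So such bitableaux are in bijection with $(m-k)$-element subsets $J\subseteq\{1,\ldots,m\}$; I would start by recording this reformulation. On the other side, a diagram in $\mathbb B^{((k),(m-k))}$ has exactly $m-k$ components that are cups or half-cups and exactly $k$ rays, so it too determines a distinguished $(m-k)$-element set of vertices, namely the left endpoints of cups together with the vertices attached to half-cups. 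The content of the lemma is that this assignment $J\mapsto(\text{diagram})$, resp.\ $(\text{diagram})\mapsto J$, is a bijection.

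First I would prove that every $(m-k)$-element subset $J$ arises from \emph{exactly one} cup diagram, which gives both well-definedness of the stated inverse and injectivity. Given $J$, the remaining vertices must each carry a ray, and among the vertices of $J$ I claim there is a unique crossingless way to distribute them among left endpoints of cups and half-cup vertices: reading the vertices in $J$ from right to left, repeatedly pair a vertex with the nearest unpaired vertex to its right that lies in $J$ (which is necessarily a \emph{right} endpoint, hence not itself a left endpoint), forming a cup; a vertex of $J$ that has no available partner to its right becomes a half-cup. Crossinglessness forces exactly this nesting (this is the usual matching-parentheses argument, with half-cups playing the role of unmatched open brackets that can all be routed to the right edge without crossings), so the diagram is uniquely determined by $J$. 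Conversely, given a diagram its set $J$ of left-cup-endpoints-and-half-cup-vertices has size (number of cups) + (number of half-cups) $=m-k$ by definition of $\mathbb B^{((k),(m-k))}$, and applying the reconstruction above to this $J$ returns the original diagram, again by the uniqueness of crossingless nestings. This shows the two maps are mutually inverse.

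Finally I would note the map in the statement is exactly this correspondence: a standard bitableau is sent to the diagram associated to the subset $J$ consisting of the entries of its second row, and the left endpoints of cups / endpoints of half-cups of that diagram recover precisely $J$. The counting identity $|\mathbb B^{((k),(m-k))}|=\binom{m}{m-k}=\#\{\text{standard one-row bitableaux of shape }((k),(m-k))\}$ can be invoked as a sanity check, but is not needed once the explicit inverse is in hand.

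The only real subtlety — the "hard part", such as it is — is the uniqueness of the crossingless diagram attached to a given subset $J$, i.e.\ that the choice of which endpoints of $J$ become half-cups versus left ends of cups is not free but forced by planarity. Everything else is bookkeeping. Making that uniqueness argument clean is easiest via the bracket-sequence model: write $0$ at vertices not in $J$ and $1$ at vertices in $J$; matched $1$'s (in the sense that a $1$ at position $i$ matches the first later position whose $1$ is not already matched, ignoring $0$'s only insofar as they cannot be endpoints) become cups, and unmatched $1$'s become half-cups — and a short planarity argument shows no other assignment admits a crossingless realization inside the rectangle with boundary on the top and right edges.
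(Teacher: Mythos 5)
Your overall strategy is the same as the paper's: reformulate a standard one-row bitableau of shape $((k),(m-k))$ as an $(m-k)$-element subset $J\subseteq\{1,\ldots,m\}$ (the entries of the second tableau), and show that every such $J$ arises from exactly one cup diagram in $\mathbb B^{((k),(m-k))}$ with $J$ as its set of left cup-endpoints and half-cup vertices. The paper asserts this existence and uniqueness without elaboration; your instinct to spell out an explicit reconstruction is a good one. However, the reconstruction you describe is wrong.

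The error is in identifying the roles of $J$ and its complement. You write that ``the remaining vertices must each carry a ray,'' but the vertices \emph{not} in $J$ are the right endpoints of cups together with the ray vertices, not just the ray vertices. You then propose to form cups by ``pair[ing] a vertex with the nearest unpaired vertex to its right that lies in $J$,'' and in the bracket-sequence version to declare that ``matched $1$'s become cups.'' But a cup has its left endpoint in $J$ and its right endpoint \emph{outside} $J$ by the very definition of the map, so you should never be matching two elements of $J$ with each other. In fact, a simple count shows this recipe cannot even land in $\mathbb B^{((k),(m-k))}$ in general: if every non-$J$ vertex were a ray, the $m-k$ vertices of $J$ would have to absorb all $c$ cups and $h$ half-cups among themselves, forcing $2c+h=m-k$; combined with the defining condition $c+h=m-k$ this gives $c=0$, so your recipe only ever produces diagrams with no cups at all.

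The fix is to match $J$ against its complement, not against itself. Treat vertices in $J$ as open brackets and vertices not in $J$ as close brackets; perform ordinary parenthesis matching (an open at position $i$ matches the nearest strictly later close that has not already been matched). Matched open--close pairs become cups with left endpoint in $J$ and right endpoint outside $J$, unmatched opens become half-cups routed to the right edge, and unmatched closes become rays routed to the bottom edge. Crossinglessness forces this nesting and hence uniqueness, and one checks that the resulting cup and half-cup count is $|J|=m-k$ as required. With this corrected algorithm the remainder of your argument (mutual inverses, the $\binom{m}{m-k}$ cardinality check) goes through exactly as you describe and reproduces the paper's bijection.
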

\begin{proof}
Such a cup diagram exists and is unique (thus we have a well-defined map). The inverse is then the following: given a cup diagram, put the left endpoints of cups and half-cups in the right Young tableau of the bipartition $((k),(m-k))$ (in the unique order such that the bitableau will be standard).   
\end{proof}

\begin{ex}
We illustrate this bijection of cup diagrams with bitableaux for the bipartition $((2),(2))$. Explicitly the bijection is:
\begin{multicols}{2}
$\bigg(\young(12),\young(34)\bigg) \mapsto $
\begin{tikzpicture}[scale=0.9,xscale=0.8,yscale=1,baseline={(0,0.5)}]
\draw[dotted] (0.5,0) -- (0.5,1) -- (4.5,1) -- (4.5,0) -- cycle;
\node[above] at (1,1) {$1$};
\node[above] at (2,1) {$2$};
\node[above] at (3,1) {$3$};
\node[above] at (4,1) {$4$};

\foreach \x in {1,...,4}{
\fill (\x,1) circle (2pt);}

\draw[thick] (1,1) -- +(0,-1);
\draw[thick] (2,1) -- +(0,-1); 

\draw[thick] (3,1) to[out=270,in=180] (4.5,0.2);
\draw[thick] (4,1) to[out=270,in=180] (4.5,0.5);
\end{tikzpicture}

$\bigg(\young(13),\young(24)\bigg) \mapsto $
\begin{tikzpicture}[scale=0.9,xscale=0.8,yscale=1,baseline={(0,0.5)}]
\draw[dotted] (0.5,0) -- (0.5,1) -- (4.5,1) -- (4.5,0) -- cycle;
\node[above] at (1,1) {$1$};
\node[above] at (2,1) {$2$};
\node[above] at (3,1) {$3$};
\node[above] at (4,1) {$4$};
\foreach \x in {1,...,4}{
\fill (\x,1) circle (2pt);}

\draw[thick] (1,1) -- +(0,-1);
\draw[thick] (2,1) .. controls +(0,-.5) and +(0,-.5) .. (3,1);
\draw[thick] (4,1) to[out=270,in=180] (4.5,0.5);
\end{tikzpicture}

$\bigg(\young(14),\young(23)\bigg) \mapsto $
\begin{tikzpicture}[scale=0.9,xscale=0.8,yscale=1,baseline={(0,0.5)}]
\draw[dotted] (0.5,0) -- (0.5,1) -- (4.5,1) -- (4.5,0) -- cycle;
\node[above] at (1,1) {$1$};
\node[above] at (2,1) {$2$};
\node[above] at (3,1) {$3$};
\node[above] at (4,1) {$4$};

\foreach \x in {1,...,4}{
\fill (\x,1) circle (2pt);}
\draw[thick] (1,1) -- +(0,-1);
\draw[thick] (3,1) .. controls +(0,-.5) and +(0,-.5) .. (4,1);
\draw[thick] (2,1) to[out=270,in=180] (4.5,0.2);
\end{tikzpicture}

\columnbreak

$\bigg(\young(34),\young(12)\bigg) \mapsto $
\begin{tikzpicture}[scale=0.9,xscale=0.8,yscale=1,baseline={(0,0.5)}]
\draw[dotted] (0.5,0) -- (0.5,1) -- (4.5,1) -- (4.5,0) -- cycle;
\node[above] at (1,1) {$1$};
\node[above] at (2,1) {$2$};
\node[above] at (3,1) {$3$};
\node[above] at (4,1) {$4$};

\foreach \x in {1,...,4}{
\fill (\x,1) circle (2pt);}

\draw[thick] (1,1) .. controls +(0,-1) and +(0,-1) .. (4,1);
\draw[thick] (2,1) .. controls +(0,-0.5) and +(0,-0.5) .. (3,1);
\end{tikzpicture}

$\bigg(\young(24),\young(13)\bigg) \mapsto $
\begin{tikzpicture}[scale=0.9,xscale=0.8,yscale=1,baseline={(0,0.5)}]
\draw[dotted] (0.5,0) -- (0.5,1) -- (4.5,1) -- (4.5,0) -- cycle;
\node[above] at (1,1) {$1$};
\node[above] at (2,1) {$2$};
\node[above] at (3,1) {$3$};
\node[above] at (4,1) {$4$};

\foreach \x in {1,...,4}{
\fill (\x,1) circle (2pt);}

\draw[thick] (1,1) .. controls +(0,-0.5) and +(0,-0.5) .. (2,1);
\draw[thick] (3,1) .. controls +(0,-0.5) and +(0,-0.5) .. (4,1);
\end{tikzpicture}

$\bigg(\young(23),\young(14)\bigg) \mapsto $
\begin{tikzpicture}[scale=0.9,xscale=0.8,yscale=1,baseline={(0,0.5)}]
\draw[dotted] (0.5,0) -- (0.5,1) -- (4.5,1) -- (4.5,0) -- cycle;
\node[above] at (1,1) {$1$};
\node[above] at (2,1) {$2$};
\node[above] at (3,1) {$3$};
\node[above] at (4,1) {$4$};

\foreach \x in {1,...,4}{
\fill (\x,1) circle (2pt);}

\draw[thick] (3,1) -- +(0,-1);
\draw[thick] (1,1) .. controls +(0,-0.5) and +(0,-0.5) .. (2,1);
\draw[thick] (4,1) to[out=270,in=180] (4.5,0.5);

\end{tikzpicture}

\end{multicols}
\end{ex}

\begin{rem}\label{rem:dimension}
By Lemma~\ref{lem:bijection_tableaux_cups} and Proposition~\ref{prop:parametrization_of_components} the cup diagrams in $\mathbb B^{((k),(m-k))}$ parametrize the irreducible components of the exotic Springer fiber corresponding to the one-row bipartition $((k),(m-k))$. By Proposition~\ref{prop:parametrization_of_components} this exotic Springer fiber has dimension $m-k$ because $M\left((k)+(m-k)\right)$ is zero as the type A Springer fiber corresponding to the one-row partition $(k)+(m-k)=(m)$ is just a point. In particular, the dimension of this exotic Springer fiber can be read off the cup diagrams by counting the number of cups plus half-cups.    
\end{rem}

\begin{rem}\label{rem:diagrammatics_dictionary}
The diagrams in Definition~\ref{defi:one-boundary_cup_diagrams} naturally arise in the study of the one-boundary Temperley--Lieb algebra or blob algebra which is a quotient of a two-parameter Hecke algebra of type C,~\cite{MW03}. In order to explain this, let $q,z\in\mathbb C^*$ and let $[n]_q=\frac{q^n-q^{-n}}{q-q^{-1}}$ be the quantum integer. Following~\cite[\S2.1]{PRH14}, we define $b_m(q,z)$ as the unital associative $\mathbb C$-algebra generated by $U_1,\ldots,U_{m-1},e$ subject to the relations
\begin{alignat*}{3}
U_i^2&=-[2]_qU_i \hspace{1.8em} && \text{if }1\leq i\leq m-1 \\
U_iU_jU_i&=U_i && \text{if }|i-j|=1\\
U_iU_j&=U_jU_i && \text{if }|i-j|>1\\
U_{m-1}eU_{m-1}&=zU_{m-1} && \\
e^2&=e && \\
U_ie&=eU_i && \text{if }1\leq i\leq m-2. 
\end{alignat*}

The algebra $b_m(q,z)$ has a diagrammatic description,~\cite{NRG05}. The generators of $b_m(q,z)$ can be depicted as
\begin{equation}\label{eq:one-boundary_generators}
U_i=\begin{tikzpicture}[scale=.8,baseline={(0,-.55)}]
\draw[dotted] (-.3,-1.1) -- (4.3,-1.1) -- (4.3,0) -- (-.3,0) -- cycle;

\node at (1,-.55) {$\cdots$};
\node at (3.5,-.55) {$\cdots$};
\node[below] at (2,-1.05) {${}_i$};
\node[below] at (0,-1.05) {${}_1$};
\node[below] at (4,-1.05) {${}_m$};

\draw[thick] (0,0) -- (0,-1.1);
\draw[thick] (.5,0) -- (.5,-1.1);
\draw[thick] (1.5,0) -- (1.5,-1.1);

\draw[thick] (2,0) .. controls +(0,-.5) and +(0,-.5) .. +(.5,0);
\draw[thick] (2.5,-1.1) .. controls +(0,.5) and +(0,.5) .. +(-.5,0);
\draw[thick] (3,0) -- (3,-1.1);
\draw[thick] (4,0) -- (4,-1.1);
\end{tikzpicture}\hspace{1em} \left(1\leq i\leq m-1\right) \hspace{.6em},\hspace{1.2em}
e=\begin{tikzpicture}[scale=.8,baseline={(0,-.55)}]
\draw[dotted] (-.3,-1.1) -- (2.3,-1.1) -- (2.3,0) -- (-.3,0) -- cycle;

\node at (1,-.55) {$\cdots$};
\node[below] at (0,-1.05) {${}_1$};
\node[below] at (2,-1.05) {${}_m$};

\draw[thick] (0,0) -- (0,-1.1);
\draw[thick] (.5,0) -- (.5,-1.1);
\draw[thick] (1.5,0) -- (1.5,-1.1);
\draw[thick] (2,0) to[out=270,in=180] (2.3,-0.4);
\draw[thick] (2,-1.1) to[out=90,in=180] (2.3,-0.7);
\end{tikzpicture}\,\,,
\end{equation}
where the $U_i$ are just the usual pictures for the generators of the type A Temperley--Lieb algebra and the picture for $e$ consists of $m-1$ subsequent vertical rays followed by an opposing pair of half-cups. The multiplication is given by stacking the diagrams vertically. The second, third and sixth relations translate into isotopy relations and the first, fourth and fifth relation describe how to remove circles and half-circles. The picture for the generator $e$ explains the term {\it one-boundary Temperley--Lieb algebra} for $b_m(q,z)$. Alternatively, one can replace the picture for $e$ in~\eqref{eq:one-boundary_generators} by 
\[
e=\begin{tikzpicture}[scale=.8,baseline={(0,-.55)}]
\draw[dotted] (-.3,-1.1) -- (1.8,-1.1) -- (1.8,0) -- (-.3,0) -- cycle;

\fill (1.5,-0.55) circle (3.2pt);

\node at (0.5,-.55) {$\cdots$};

\draw[thick] (0,0) -- (0,-1.1);
\draw[thick] (1,0) -- (1,-1.1);
\draw[thick] (1.5,0) -- (1.5,-1.1);
\end{tikzpicture}\,\,,
\]
which consists of $m$ subsequent vertical strands with the rightmost strand being additionally decorated by a single blob. In this case, the algebra $b_m(q,z)$ is generally referred to as the {\it blob algebra},~\cite{MS94}. In terms of the blob diagrams, the fifth relation means that two blobs on a component can be merged into a single blob and the fourth relation explains how to remove circles with a blob.

By cutting a diagram obtained as the composition of some of the generators of $b_m(q,z)$ along its middle horizontal line, it can be separated uniquely into a top and bottom part (see, e.g.,~\cite[\S3]{NRG05} or~\cite[\S2.2]{MS94} for details). If we use the one-boundary diagrammatics, the top part consists of cups, half-cups and rays only. For the blob diagrammatics, the upper part consists of cups and rays only, some of which may be decorated with a single blob. For our purposes, we prefer to use the one-boundary cup diagrams since the decorated cup diagrams already appear in the description of the classical two-row Springer fibers of types C and D,~\cite{ES12},~\cite{Wil18}, and there appears to be a general overuse of dots or blobs in the literature on diagram algebras. 
\end{rem}

\section{Structure of the irreducible components}\label{sec:structure_irred_comp}

In this section, we provide an explicit description of the irreducible components of the exotic Springer fiber $\mathcal Fl^{((k),(m-k))}$ and deduce some important geometric properties (see Theorem~\ref{thm:algebraic_main_result}). 

\subsection{Embedding the exotic Springer fiber into a smooth variety} \label{sec:embedded_Springer_fiber}


Let $N>0$ be a large integer (see Remark~\ref{y_remark} for a more precise explanation of what is meant by ``large'') and let $z\colon\mathbb C^{2N}\to\mathbb C^{2N}$ be a nilpotent linear endomorphism with two equally sized Jordan blocks, i.e., there is a Jordan basis 
\begin{equation} \label{eq:Jordan_basis_of_z}
\begin{tikzpicture}[baseline={(0,0)}]
\node (e1) at (0,0) {$e_1$};
\node (e2) at (1,0) {$e_2$};
\node (space1) at (2,0) {$\ldots{}^{}$};
\node (eN) at (3.1,0) {$e_N$};
\node (f1) at (5,0) {$f_1$};
\node (f2) at (6,0) {$f_2$};
\node (space2) at (7,0) {$\ldots{}^{}$};
\node (fN) at (8.1,0) {$f_N$};
\path[->,font=\scriptsize,>=angle 90,bend right]
(e2) edge (e1)
(space1) edge (e2)
(eN) edge (space1)
(f2) edge (f1)
(space2) edge (f2)
(fN) edge (space2);
\end{tikzpicture}
\end{equation}
of $\mathbb C^{2N}$ on which $z$ acts as indicated (the vectors $e_1$ and $f_1$ are sent to zero). 



In the following, we will consider the smooth, projective variety 
\begin{equation} \label{eq:Y_i}
Y_m:=\big\{\left(F_1,\ldots,F_m\right) \mid F_i \subseteq \mathbb C^{2N},\, \dim F_i= i ,\, F_1\subseteq\ldots\subseteq F_m,\, zF_i\subseteq F_{i-1}\big\}
\end{equation}
defined in~\cite[\S2]{CK08}. 

\begin{rem} \label{y_remark}
Note that the conditions $zF_i\subseteq F_{i-1}$ yield
\begin{displaymath}
F_m \subseteq z^{-1}F_{m-1} \subseteq\ldots\subseteq z^{-m}(0) = \spn(e_1,\ldots,e_m,f_1,\ldots,f_m). 
\end{displaymath}
Thus, the variety $Y_m$ is independent of the choice of $N$ as long as $N \geq m$. In particular, we can always assume (by increasing $N$ if necessary) that all the subspaces of a flag in $Y_m$ are contained in the image of $z$. 
\end{rem}

Let $E_m\subseteq\mathbb C^{2N}$ be the subspace spanned by $e_1,\ldots,e_m,f_1,\ldots,f_m$. We equip $E_m$ with a symplectic form $\omega_m$ defined as follows: for all $j,j'\in\{1,\ldots,m\}$ we set 
\[
\omega_m(e_j,f_{j'})=\delta_{j+j',m+1}=-\omega_m(f_{j'},e_j).
\]
Otherwise, the pairing is defined to be zero. A direct computation shows that $\omega_m(z(v),w)=\omega_m(v,z(w))$ for all $v,w\in E_m$, i.e., the restriction $z_m$ of $z$ to the subspace $E_m\subseteq\mathbb C^{2N}$ is a nilpotent endomorphism which is self-adjoint with respect to $\omega_m$.

Thus, we can view the exotic Springer fiber $\mathcal Fl^{((k),(m-k))}$ as a subvariety of $Y_m$ via the following identification 
\begin{equation} \label{eq:type_D_embedding}
\mathcal Fl^{((k),(m-k))} \cong \left\{(F_1,\ldots,F_m) \in Y_m \left|\;\vcenter{\hbox{$e_k\in F_m\subseteq E_m$ and $F_m$ is}
																																					 \hbox{isotropic with respect to $\omega_m$}}
																														\right.\right\}.
\end{equation} 

The next proposition shows that the description of the embedding (\ref{eq:type_D_embedding}) can be simplified. More precisely, it turns out that we can completely ignore the symplectic structure for exotic Springer fibers corresponding to one-row bipartitions. 

\begin{prop}\label{prop:exotic_embedding}
The exotic Springer fiber $\mathcal Fl^{((k),(m-k))}$ can be viewed as a subvariety of $Y_m$ via the following identification
\begin{equation} \label{eq:exotic_embedding}
\mathcal Fl^{((k),(m-k))} \cong \left\{(F_1,\ldots,F_m) \in Y_m \mid e_k\in F_m\right\}.
\end{equation}
\end{prop}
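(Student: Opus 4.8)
The plan is to show that the two subsets of $Y_m$ cut out by the conditions in~\eqref{eq:type_D_embedding} and in~\eqref{eq:exotic_embedding} actually coincide. Since~\eqref{eq:type_D_embedding} imposes three conditions on a flag $(F_1,\ldots,F_m)\in Y_m$ --- namely $e_k\in F_m$, $F_m\subseteq E_m$, and $F_m$ isotropic with respect to $\omega_m$ --- while~\eqref{eq:exotic_embedding} imposes only the single condition $e_k\in F_m$, it suffices to prove that for a flag in $Y_m$, the condition $e_k\in F_m$ already forces $F_m\subseteq E_m$ and $F_m$ isotropic. The containment $F_m\subseteq E_m=z^{-m}(0)$ is automatic for \emph{any} flag in $Y_m$ by the computation in Remark~\ref{y_remark} (the conditions $zF_i\subseteq F_{i-1}$ give $F_m\subseteq z^{-m}(0)=\spn(e_1,\ldots,e_m,f_1,\ldots,f_m)=E_m$), so this part needs no use of $e_k\in F_m$ at all. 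The real content is therefore: \emph{every} $m$-dimensional subspace $F_m$ of $E_m$ with $zF_i\subseteq F_{i-1}$ for a full flag below it, and containing $e_k$, is automatically isotropic for $\omega_m$.

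First I would set up notation: write $F_m\subseteq E_m$ with $z_m=z|_{E_m}$ nilpotent and self-adjoint with respect to $\omega_m$ (as established just before the proposition), and recall $z_m$ has Jordan type $(m,m)$ on $E_m$. The key structural fact is that $F_m$ is a full flag's top space stable in the strong sense that $z F_i\subseteq F_{i-1}$; restricting attention to $F_m$ itself, we have $z_m F_m\subseteq F_{m-1}\subsetneq F_m$, so $z_m|_{F_m}$ is nilpotent on the $m$-dimensional space $F_m$, and in fact the flag $F_1\subsetneq\cdots\subsetneq F_m$ shows $z_m|_{F_m}$ is a \emph{regular} (single Jordan block) nilpotent endomorphism of $F_m$. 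So $F_m$ is a $z_m$-stable subspace of $E_m$ of dimension $m$ on which $z_m$ acts with a single Jordan block; equivalently, $F_m=\spn(w, z_m w,\ldots, z_m^{m-1}w)$ for some $w\in E_m$ with $z_m^{m}w=0$, $z_m^{m-1}w\neq 0$, and one checks $e_k$ must be (a scalar multiple of) $z_m^{m-k}w$ since $e_k$ spans the unique line in $F_m$ killed by $z_m^{k}$ but not by $z_m^{k-1}$ — here is where the hypothesis $e_k\in F_m$ gets used to pin down which cyclic subspace we are looking at, though I suspect it is not strictly needed for isotropy and only serves to identify the fiber.

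The heart of the argument is then the linear-algebra lemma: if $W\subseteq E_m$ is a $z_m$-stable subspace of dimension $m$ on which $z_m$ is regular nilpotent (a single block of size $m$), and $z_m$ is self-adjoint for $\omega_m$ with Jordan type $(m,m)$, then $W$ is isotropic. I would prove this using self-adjointness: for $u,v\in W$ write $u=\sum a_i z_m^i w$, $v=\sum b_j z_m^j w$ with $W=\spn(w,\ldots,z_m^{m-1}w)$; then $\omega_m(z_m^i w, z_m^j w)=\omega_m(w, z_m^{i+j}w)$ by moving powers of $z_m$ across using self-adjointness, and $z_m^{i+j}w=0$ whenever $i+j\geq m$. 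So it remains to show $\omega_m(w,z_m^{t}w)=0$ for $0\le t\le m-1$. For $t$ odd this is automatic by antisymmetry combined with self-adjointness ($\omega_m(w,z_m^tw)=\omega_m(z_m^t w,w)=(-1)?\ldots$ — one must be slightly careful, it is $\omega_m(w,z_m^t w)=\omega_m(z_m^t w, w)=-\omega_m(w,z_m^t w)$ only in the antisymmetry step), so $\omega_m(w,z_m^tw)=0$ for all $t$. Actually the cleanest phrasing: $\omega_m(w,z_m^t w)=\omega_m(z_m^{\lceil t/2\rceil}w, z_m^{\lfloor t/2\rfloor}w)$, and by antisymmetry swapping the two arguments and shifting one $z_m$ across shows this equals its own negative when... — I would just write it out directly rather than relying on a parity trick, since $z_m^{i+j}w=0$ for $i+j\ge m$ handles most terms and the remaining cross terms pair up under antisymmetry plus self-adjointness to give zero. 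This shows $\omega_m|_{W\times W}=0$, i.e.\ $W$ is isotropic.

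The main obstacle I anticipate is making the previous paragraph airtight: the claim ``$\omega_m(w,z_m^{t}w)=0$ for all $0\le t\le m-1$'' needs the interplay of three facts — $z_m$ self-adjoint, $\omega_m$ antisymmetric, and $z_m^m w=0$ (so that any exponent reaching $m$ kills $w$) — and one has to verify there is no ``middle'' exponent $t$ for which the vanishing fails. The cleanest way is: $\omega_m(w, z_m^t w) = \omega_m(z_m^t w, w)$ would be false in general (it's antisymmetric), so instead observe $\omega_m(w,z_m^t w) = (-1)\,\omega_m(z_m^t w, w) = -\,\omega_m(w, z_m^t w)$ where the last step moves all $t$ copies of $z_m$ back across one at a time using self-adjointness — but moving $z_m$ across is $\omega_m(z_m a, b) = \omega_m(a, z_m b)$ with no sign, so $\omega_m(z_m^t w, w) = \omega_m(w, z_m^t w)$, giving $\omega_m(w,z_m^t w) = -\omega_m(w,z_m^t w)$, hence $=0$. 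So the parity trick does work and there is no middle-exponent problem; the obstacle is really just bookkeeping. Once the lemma is in hand, combining it with Remark~\ref{y_remark} gives both extra conditions in~\eqref{eq:type_D_embedding} for free, so the two descriptions agree and the proposition follows.
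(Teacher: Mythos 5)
Your proof has a genuine gap: the claim that the chain $F_1\subsetneq\cdots\subsetneq F_m$ with $zF_i\subseteq F_{i-1}$ forces $z_m|_{F_m}$ to be regular nilpotent is false. Regularity of $z_m|_{F_m}$ is equivalent to $\dim(\ker z\cap F_m)=1$, or equivalently to the equalities $zF_i=F_{i-1}$ for all $i$, whereas flags in $Y_m$ only satisfy containments. Since $\ker z=\spn(e_1,f_1)$ is two-dimensional, it can be entirely contained in $F_m$: for $m=3$, take $F_1=\spn(e_1)$, $F_2=\spn(e_1,f_1)$, $F_3=\spn(e_1,e_2,f_1)$, a perfectly valid flag in $Y_3$ (with $e_1\in F_3$) on which $z$ has Jordan type $(2,1)$, not $(3)$. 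In that case $F_m$ is not cyclic for $z_m$, so your lemma does not apply, and your argument as written does not establish isotropy. (Your instinct that the hypothesis $e_k\in F_m$ plays no role in isotropy is correct, though; the paper proves isotropy is automatic for every flag in $Y_m$.)

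The gap is repairable, and the repaired argument is genuinely different from the paper's. Since $z$ has exactly two Jordan blocks on $E_m$, any $z$-stable $F_m$ is a sum of at most two cyclic pieces $\spn(w_1,\ldots,z^{a-1}w_1)\oplus\spn(w_2,\ldots,z^{b-1}w_2)$ with $a+b=m$, $z^aw_1=z^bw_2=0$. Your self-adjointness/antisymmetry parity trick kills the pairings within each piece, and for the cross terms one checks $\omega_m(z^iw_1,z^jw_2)=\omega_m(w_1,z^{i+j}w_2)=0$ directly by expanding in the $e_j,f_j$ basis: $z^aw_1=0$ forces $w_1\in\spn(e_1,\ldots,e_a,f_1,\ldots,f_a)$ and similarly for $w_2$, so the Kronecker deltas $\delta_{j+j',m+1+t}$ coming from $\omega_m$ can never fire, since $j+j'\le a+b=m<m+1+t$. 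In contrast, the paper argues by induction on $i$: at each step it produces auxiliary vectors $e^{(i)}_j,f^{(i)}_j$ spanning a complement of $F_i$ inside $F_i^\perp$ on which $z$ acts in Jordan form and $\omega_m$ has a standard shape, and uses this normal form to see immediately that $F_{i+1}=F_i\oplus\spn(\alpha e_1^{(i)}+\beta f_1^{(i)})$ is still isotropic. Your repaired route is shorter and stays entirely at the top of the flag; the paper's route tracks an explicit Jordan-type normal form of $F_i^\perp/F_i$ at every stage and is closer in spirit to the arguments used to construct the iterated $\mathbb P^1$-bundle structure later in the paper.
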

\begin{proof}
Note that the inclusion $F_m\subseteq E_m$ is automatically satisfied by Remark~\ref{y_remark}. In order to reduce the description (\ref{eq:type_D_embedding}) to (\ref{eq:exotic_embedding}), it thus suffices to show that the isotropy condition on $F_m$ is automatically satisfied for all $(F_1,\ldots,F_m)\in Y_m$ and hence can be dropped, too.     

Let $(F_1,\ldots,F_m)\in Y_m$ be a flag. By induction we assume that we have already shown that the vector spaces $F_1,\ldots,F_i$ are isotropic with respect to $\omega_m$. Furthermore, we assume that there exist linearly independent vectors
\begin{eqnarray} \label{basis_induction}
\begin{array}{cc}
&e_1^{(i)},\hspace{.5em} e_2^{(i)}, \hspace{.5em}\ldots\hspace{.5em}, \hspace{.5em}e_{m-i-1}^{(i)}, \hspace{.5em}e_{m-i}^{(i)}, \\
&f_1^{(i)},\hspace{.5em} f_2^{(i)}, \hspace{.5em}\ldots\hspace{.5em}, \hspace{.5em}f_{m-i-1}^{(i)}, \hspace{.5em}f_{m-i}^{(i)},
\end{array}
\end{eqnarray}
in $\mathbb C^{2N}$ such that the following three properties hold:
\begin{enumerate}[{(\text{P}}1{)}]
\item \label{P1} The endomorphism $z$ maps each vector in (\ref{basis_induction}) to its left neighbor and the two leftmost vectors in each row to $F_i$.
\item \label{P2} We have a decomposition $F_i\oplus G_i=F_i^\perp$, where $G_i\subseteq\mathbb C^{2N}$ is the subspace spanned by the vectors in (\ref{basis_induction}) and the orthogonal complement is taken with respect to $\omega_m$. 
\item \label{P3} We have $\omega_m(e_j^{(i)},f_{j'}^{(i)})=\delta_{j+j',m-i+1}$.
\end{enumerate}

Note that the induction starts with the zero space $\{0\}$ and the basis $e_1,\ldots,e_m,f_1,\ldots,f_m$ of $E_m$ which satisfies~\ref{P1}-\ref{P3}.

Since $zF_{i+1}\subseteq F_i$, we can write
\[
F_{i+1}=F_i\oplus\spn(\alpha e_1^{(i)}+\beta f_1^{(i)})
\]
for some $\alpha,\beta \in \mathbb C$, $\alpha \neq 0$ or $\beta \neq 0$. Then properties~\ref{P2} and~\ref{P3} show that $F_{i+1}$ is indeed isotropic. 

In order to complete the inductive argument we need to show the existence of the vectors (\ref{basis_induction}) for the space $F_{i+1}$. We distinguish between two different cases depending on the parameters $\alpha$ and $\beta$.

{\it Case 1:} If $\alpha\neq 0$ and $\beta\neq 0$ we consider the linearly independent vectors
\begin{equation} \label{eq:Jordan_basis}
	\begin{split}
&\alpha e_1^{(i)}-\beta f_1^{(i)}\,\,\,,\,\,\, \alpha e_2^{(i)}-\beta f_2^{(i)}\,\,\,,\,\,\, \ldots \,\,\,,\,\,\, \alpha e_{m-i-1}^{(i)}-\beta f_{m-i-1}^{(i)}\\
&\alpha e_2^{(i)}+\beta f_2^{(i)}\,\,\,,\,\,\, \alpha e_3^{(i)}+\beta f_3^{(i)}\,\,\,,\,\,\, \ldots \,\,\,,\,\,\, \alpha e_{m-i}^{(i)}+\beta f_{m-i}^{(i)} 
	\end{split}
\end{equation}
and define 
\[
e_j^{(i+1)}:=\frac{1}{\sqrt{2\alpha\beta}}\left(\alpha e_j^{(i)}-\beta f_j^{(i)}\right) \hspace{1em} \text{and} \hspace{1em} f_j^{(i+1)}:=\frac{1}{\sqrt{2\alpha\beta}}\left(\alpha e_{j+1}^{(i)}+\beta f_{j+1}^{(i)}\right)
\]
for $j\in\{1,\ldots,m-i-1\}$. It is straightforward to check that this collection of vectors satisfies~\ref{P1}-\ref{P3} for $F_{i+1}$. 

{\it Case 2:} If $\beta=0$ we define
\[
e_j^{(i+1)}:=e_{j+1}^{(i)} \hspace{1em} \text{and} \hspace{1em} f_j^{(i+1)}:=f_j^{(i)}
\]
for $j\in\{1,\ldots,m-i-1\}$ and if $\alpha=0$ we set
\[
e_j^{(i+1)}:=e_j^{(i)} \hspace{1em} \text{and} \hspace{1em} f_j^{(i+1)}:=f_{j+1}^{(i)}
\]
for $j\in\{1,\ldots,m-i-1\}$. Again, one easily checks that this collection of vectors satisfies~\ref{P1}-\ref{P3} for $F_{i+1}$.
\end{proof}

\subsection{Explicit description of the irreducible components}\label{subsec:irred_components}
For the remainder of this article we will write $\mathcal Fl^{((k),(m-k))}$ to denote the embedded exotic Springer fiber via identification (\ref{eq:exotic_embedding}) from Proposition~\ref{prop:exotic_embedding}.

\begin{thm}\label{thm:algebraic_main_result}
The following statements hold:
\begin{enumerate}[(a)]
\item\label{second_part_thm} Given $\ba\in\mathbb B^{((k),(m-k))}$, let $K_\ba\subseteq Y_m$ be the subvariety consisting of all flags $(F_1,\ldots,F_m)\in Y_m$ which satisfy the following conditions imposed by the cup diagram $\ba$:
\begin{enumerate}[(i)]
\item If $i \CupConnect j$, then
\[
F_j=z^{-\frac{1}{2}(j-i+1)}F_{i-1}.
\]
\item If $\RayConnect$, then
\[
F_i=F_{i-1}+\spn\left(e_{\frac{1}{2}(i+\rho_\ba(i)))}\right),
\]
where $\rho_\ba(i)$ counts the number of rays to the left of vertex $i$ (including the vertex $i$) in $\ba$. 
\end{enumerate}
(There are no relations for vector spaces indexed by a vertex connected to a half-cup.) Then the variety $K_\ba\subseteq Y_m$ is an irreducible component of the exotic Springer fiber $\mathcal Fl^{((k),(m-k))}\subseteq Y_m$.
\item\label{third_part_thm} The irreducible component $K_\ba\subseteq\mathcal Fl^{((k),(m-k))}$ is an $(m-k)$-fold iterated fiber bundle over $\mathbb P^1$, i.e., there exist spaces $K_\ba=X_1,X_2,\ldots,X_{m-k},X_{m-k+1}=\mathrm{pt}$ together with maps $p_1,p_2,\ldots,p_{m-k}$ such that $p_j\colon X_j\to \mathbb P^1$ is a fiber bundle with typical fiber $X_{j+1}$. In particular, the irreducible component $K_\ba$ is smooth.
\item\label{first_part_thm} The map $\ba\mapsto K_\ba$ defines a bijection between the one-boundary cup diagrams in $\mathbb B^{((k),(m-k))}$ and the irreducible components of $\mathcal Fl^{((k),(m-k))}$.
\end{enumerate}
\end{thm}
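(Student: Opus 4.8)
The plan is to establish the three assertions of Theorem~\ref{thm:algebraic_main_result} in the order \ref{second_part_thm}, \ref{third_part_thm}, \ref{first_part_thm}, since each part feeds into the next. For part~\ref{second_part_thm} I would first check that $K_\ba$ is actually contained in $\mathcal Fl^{((k),(m-k))}$ via the embedding~\eqref{eq:exotic_embedding}: the conditions (i) and (ii) visibly force $F_m$ to be $z$-stable in the appropriate sense, and one needs to verify that $e_k \in F_m$. This should follow by tracking how the cup/ray data determines $F_m$ as a sum of spaces of the form $z^{-\ell}(0)$ intersected with coordinate subspaces, together with the span of Jordan basis vectors $e_{(i+\rho_\ba(i))/2}$ coming from the rays; since the number of rays equals $k$ and they are positioned so that the largest relevant index is $k$, the vector $e_k$ lies in $F_m$. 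One must also check $K_\ba \subseteq Y_m$ is well-defined, i.e.\ that the prescribed equalities are mutually compatible and cut out a nonempty closed subvariety.

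The core of part~\ref{second_part_thm} and all of part~\ref{third_part_thm} is the iterated fiber bundle structure, and I would prove them simultaneously. The idea is to build $K_\ba$ by adding the vector spaces $F_1 \subsetneq F_2 \subsetneq \cdots$ one at a time, reading off the cup diagram from left to right. A ray at vertex $i$ rigidly determines $F_i$ from $F_{i-1}$ (condition (ii)), so it contributes no parameters; the left endpoint $i$ of a cup $i \CupConnect j$ is a free choice of a line in a suitable $2$-dimensional quotient (a $\mathbb P^1$), and the right endpoint $j$ is then rigidly determined by condition (i); a half-cup at vertex $i$ is again a free $\mathbb P^1$ choice, now unconstrained later. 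Thus $K_\ba$ fibers over $\mathbb P^1$ (the choice at the leftmost cup- or half-cup vertex) with fiber the analogous variety for the cup diagram with that vertex and its partner (if any) removed; iterating gives exactly $m-k$ copies of $\mathbb P^1$, one per cup plus half-cup, matching the count defining $\mathbb B^{((k),(m-k))}$. The cleanest formulation is an induction on $m-k$, using Proposition~\ref{prop:exotic_embedding} to strip the symplectic condition and a lemma (to be stated, analogous to the type A analysis in~\cite{CK08}, \cite{Fun03}) that the relevant $2$-dimensional quotient spaces are genuinely $2$-dimensional so the base is really $\mathbb P^1$ and not a point. Smoothness and the dimension count $\dim K_\ba = m-k$ are then immediate, and since $\dim \mathcal Fl^{((k),(m-k))} = m-k$ by Remark~\ref{rem:dimension} and $K_\ba$ is irreducible (being an iterated $\mathbb P^1$-bundle) and closed of full dimension, it is an irreducible component.

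For part~\ref{first_part_thm}, injectivity of $\ba \mapsto K_\ba$ should be extracted from the explicit description in~\ref{second_part_thm}: from a generic flag in $K_\ba$ one can recover which vertices are ray-vertices, which pairs are cup-connected, and which are half-cup vertices, for instance by looking at the jump behaviour of $\dim(F_i \cap z^{-\ell}(0))$ or the incidence of $F_i$ with the fixed Jordan vectors, so distinct diagrams give distinct subvarieties. Surjectivity then follows by a counting argument: by Lemma~\ref{lem:bijection_tableaux_cups} the set $\mathbb B^{((k),(m-k))}$ is in bijection with standard one-row bitableaux of shape $((k),(m-k))$, and by Proposition~\ref{prop:parametrization_of_components} these are in bijection with the irreducible components of $\mathcal Fl^{((k),(m-k))}$; since we have produced $|\mathbb B^{((k),(m-k))}|$ pairwise distinct irreducible components $K_\ba$, they must be all of them.

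The main obstacle I anticipate is making the fiber bundle construction in~\ref{third_part_thm} precise — specifically, setting up the inductive step so that "removing the leftmost cup or half-cup and its partner" produces a cup diagram of the same combinatorial type on fewer vertices whose associated variety is literally the fiber, and verifying that the quotient space parametrizing the left endpoint really is $2$-dimensional (rather than collapsing to a point, which would make the base $\mathbb P^0$). This requires carefully choosing, at each stage, a basis of the ambient space adapted to the flag already constructed — essentially the inductive basis bookkeeping already rehearsed in the proof of Proposition~\ref{prop:exotic_embedding} — and checking that the constraints imposed by cups further to the right do not interfere with the freedom at the current vertex. Everything else (containment, the recovery map for injectivity, the final counting) is comparatively routine once this structural backbone is in place.
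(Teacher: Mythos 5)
Your overall strategy -- show containment, establish the iterated $\mathbb P^1$-bundle structure, deduce irreducibility, smoothness and the dimension count, then use the counting bijection of Proposition~\ref{prop:parametrization_of_components} together with Lemma~\ref{lem:bijection_tableaux_cups} to upgrade to part~\ref{first_part_thm} -- is exactly the paper's route, and parts~\ref{third_part_thm} and~\ref{first_part_thm} of your sketch correspond closely to what the authors do (they cite \cite{Fun03} and \cite{Sch12} for the fiber bundle inductions, and they likewise treat the injectivity of $\ba\mapsto K_\ba$ as visible from the explicit description).

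However, there is a genuine gap in your justification of the containment $K_\ba\subseteq\mathcal Fl^{((k),(m-k))}$ in part~\ref{second_part_thm}. You assert that the number of rays in $\ba\in\mathbb B^{((k),(m-k))}$ equals $k$ and that the ray vectors $e_{(i+\rho_\ba(i))/2}$ therefore include $e_k$. This is false in general: if $\ba$ has $c$ cups and $h$ half-cups with $c+h=m-k$, then $2c+h+r=m$ forces $r=k-c$ rays, so the ray data alone only reaches $e_{k-c}$. For example, for $m=4$, $k=2$ and $\ba$ with cups $1\CupConnect 2$ and $3\CupConnect 4$, there are no rays at all, yet one still needs $e_2\in F_4$. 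The cup conditions must be used: here $F_4=z^{-2}F_0=\spn(e_1,e_2,f_1,f_2)\ni e_2$. The paper handles this by a separate Proposition (Proposition~\ref{prop:K_a_contained}) proved by induction on the number of cups, removing a cup $i\CupConnect(i+1)$ and applying the morphism $q_m^i$ of~\eqref{eq:q_i_morphism}; the key step is that from $e_{k-1}\in zF_m$ one gets a preimage $v=e_k+\alpha e_1+\beta f_1\in F_m$, and since $\ker z=\spn(e_1,f_1)=z^{-1}(0)\subseteq z^{-1}F_{i-1}=F_{i+1}\subseteq F_m$ one may subtract off the kernel term to conclude $e_k\in F_m$. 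Your sketch does set up the right inductive scaffolding for part~\ref{third_part_thm}, but you would need to run an analogous cup-removal induction for the containment statement itself, and as written your part~\ref{second_part_thm} argument does not do so.
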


Let $z_m$ be the restriction of $z$ to $E_m$. The following Lemma shows that we could replace $z$ by $z_m$ in relation (i) of Theorem~\ref{thm:algebraic_main_result}\ref{second_part_thm}. In particular, the description of the irreducible components of the exotic Springer fiber in Theorem~\ref{thm:algebraic_main_result}\ref{second_part_thm} also makes sense without the embedding into $Y_m$.

\begin{lem}
Given a flag $(F_1,\ldots,F_m)\in Y_m$, we have $z^{-\frac{1}{2}(j-i+1)}F_{i-1}=z_m^{-\frac{1}{2}(j-i+1)}F_{i-1}$ for all $i,j\in\{1,\ldots,m\}$, $i<j$. 
\end{lem}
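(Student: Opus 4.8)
The statement to prove is that for a flag $(F_1,\ldots,F_m)\in Y_m$ and $i<j$ in $\{1,\ldots,m\}$, the preimages $z^{-\frac{1}{2}(j-i+1)}F_{i-1}$ and $z_m^{-\frac{1}{2}(j-i+1)}F_{i-1}$ agree, where $z_m$ is the restriction of $z$ to $E_m=\spn(e_1,\ldots,e_m,f_1,\ldots,f_m)$. Write $r=\frac{1}{2}(j-i+1)$; note $r$ is a positive integer with $r\leq m$ (indeed $j-i+1\leq m$ since $1\leq i<j\leq m$). The key observation is that $z_m^{-r}F_{i-1}=\{v\in E_m\mid z^r v\in F_{i-1}\}=z^{-r}F_{i-1}\cap E_m$, since $z$ and $z_m$ act identically on $E_m$ and $z(E_m)\subseteq E_m$. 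So the inclusion $z_m^{-r}F_{i-1}\subseteq z^{-r}F_{i-1}$ is immediate, and what must be shown is the reverse inclusion $z^{-r}F_{i-1}\subseteq E_m$.

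First I would invoke Remark~\ref{y_remark}: after possibly enlarging $N$, every subspace of a flag in $Y_m$ — in particular $F_m$, and hence all $F_i$ — lies in the image of $z^m$, i.e. in $z^m(\mathbb C^{2N})$. From the Jordan basis~(\ref{eq:Jordan_basis_of_z}) one computes directly that $z^m(\mathbb C^{2N})=\spn(e_1,\ldots,e_{N-m},f_1,\ldots,f_{N-m})$, whereas $z^{-m}(0)=\spn(e_1,\ldots,e_m,f_1,\ldots,f_m)=E_m$. So the real content is a statement about the single nilpotent operator $z$ on $\mathbb C^{2N}$: if $W\subseteq \im(z^m)$ is any subspace and $r\leq m$, then $z^{-r}W\subseteq E_m$. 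In fact it suffices to prove it for $W=\im(z^m)$ itself, i.e. that $z^{-r}(\im z^m)\subseteq \ker z^m = E_m$, and this reduces further to the single-Jordan-block case by the direct sum decomposition $\mathbb C^{2N}=\spn(e_\bullet)\oplus\spn(f_\bullet)$, each summand being $z$-stable and a single Jordan block of size $N$.

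For a single nilpotent Jordan block $J$ of size $N$ with $J e_\ell = e_{\ell-1}$ (and $Je_1=0$), one checks that $\im(J^m)=\spn(e_1,\ldots,e_{N-m})$ and $J^{-r}(\spn(e_1,\ldots,e_{N-m}))=\spn(e_1,\ldots,e_{N-m+r})\subseteq\spn(e_1,\ldots,e_m)$ precisely when $N-m+r\leq m$, i.e. $r\leq 2m-N$ — which is the wrong direction. The fix is that we do not need $W=\im(z^m)$; Remark~\ref{y_remark} with a \emph{larger} choice of $N$ is what does the work, but the honest approach is cleaner: since $F_{i-1}\subseteq F_m$ and (Remark~\ref{y_remark}) $F_m\subseteq z^{-m}(0)=E_m$, we want to show $z^{-r}F_{i-1}\subseteq E_m=\ker(z^m)$, i.e. $z^{m}(z^{-r}F_{i-1})=0$. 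Now if $v\in z^{-r}F_{i-1}$ then $z^r v\in F_{i-1}\subseteq E_m=\ker z^m$, so $z^{m+r}v=0$, which gives $z^{-r}F_{i-1}\subseteq\ker(z^{m+r})$ — still not quite $\ker z^m$. The genuinely correct argument uses that $F_{i-1}$ has dimension $i-1$ and, being $z$-stable-ish inside the flag (with $zF_\ell\subseteq F_{\ell-1}$), actually satisfies $F_{i-1}\subseteq\ker(z^{i-1})\subseteq\ker(z^{m-1})$; combined with $r\leq m$ one still needs care. I would therefore organize the proof as: (1) reduce to showing $z^{-r}F_{i-1}\subseteq E_m$; (2) use $zF_\ell\subseteq F_{\ell-1}$ inductively to get $F_{i-1}\subseteq\ker(z^{i-1})$, hence $z^r v\in\ker(z^{i-1})$ for $v\in z^{-r}F_{i-1}$, so $v\in\ker(z^{i-1+r})$; (3) bound $i-1+r = i-1+\tfrac{1}{2}(j-i+1)=\tfrac{1}{2}(i+j-1)\leq \tfrac{1}{2}(2m-1)<m$, so $i-1+r\leq m-1<m$ and $\ker(z^{i-1+r})\subseteq\ker(z^m)=E_m$.

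The main obstacle is precisely this bookkeeping in step (3): getting the exponent $i-1+r$ expressed correctly as $\tfrac12(i+j-1)$ and checking it is at most $m-1$ (using $j\leq m$ and $i\geq 1$, so $i+j-1\leq 2m-2$, hence $\tfrac12(i+j-1)\leq m-1$) — a single clean inequality, after which $\ker(z^{i-1+r})\subseteq\ker(z^{m})=z^{-m}(0)=E_m$ finishes it. Everything else is the elementary linear algebra of preimages under $z$ versus $z_m$, and the observation $z_m^{-r}F_{i-1}=z^{-r}F_{i-1}\cap E_m$, so the reverse inclusion $z^{-r}F_{i-1}\subseteq E_m$ is exactly what makes the two preimages coincide.
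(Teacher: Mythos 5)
Your final organized argument — reduce to showing $z^{-r}F_{i-1}\subseteq E_m$, observe $F_{i-1}\subseteq\ker(z^{i-1})$ from the flag condition $zF_\ell\subseteq F_{\ell-1}$, and bound $i-1+r=\tfrac{1}{2}(i+j-1)\leq m-1<m$ so the preimage lands inside $\ker(z^m)=z^{-m}(0)=E_m$ — is correct and is essentially the paper's argument, which proves the same containment via the chain $z^{-r}F_{i-1}\subseteq z^{-(m-i+1)}F_{i-1}\subseteq z^{-(m-i+1)}(z^{-(i-1)}(0))=z^{-m}(0)=E_m$. The meandering detours in your write-up (the $\im(z^m)$ computation, the $\ker(z^{m+r})$ dead end) are acknowledged as false starts and can simply be deleted; the clean three-step version at the end is the proof.
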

\begin{proof}
It suffices to show that $z^{-\frac{1}{2}(j-i+1)}F_{i-1}$ is contained in $E_m$ for all $i,j\in\{1,\ldots,m\}$, $i<j$. This follows from the chain of inclusions
\begin{align*}
z^{-\frac{1}{2}(j-i+1)}F_{i-1}\subseteq z^{-(j-i+1)}F_{i-1}\subseteq z^{-(m-i+1)}F_{i-1}&\subseteq z^{-(m-i+1)}\left(z^{-(i-1)}(0)\right)\\ &=z^{-m}(0)=E_m.
\qedhere
\end{align*}
\end{proof}

%

For the proof of Theorem~\ref{thm:algebraic_main_result} we consider the subvariety $X^i_m \subseteq Y_m$, $i \in \{1,\ldots,m-1\}$, defined by 
\begin{equation}\label{eq:X_i}
X^i_m:=\{(F_1,\ldots,F_m) \in Y_m \mid F_{i+1}=z^{-1}F_{i-1}\},
\end{equation}
and the surjective morphism of algebraic varieties $q_m^i \colon X^i_m \twoheadrightarrow Y_{m-2}$ given by
\begin{equation}\label{eq:q_i_morphism}
(F_1,\ldots,F_m) \mapsto \left(F_1,\ldots,F_{i-1},zF_{i+2},\ldots,zF_m\right),
\end{equation}
see also~\cite[\S2]{CK08}.

\begin{lem}\label{lem:induction_alg_components}
Let $\ba\in\mathbb B^{((k),(m-k))}$ be a cup diagram with a cup connecting vertices $i$ and $i+1$ and let $\ba'\in\mathbb B^{((k-1),(m-k-1))}$ be the cup diagram obtained by deleting this cup. Then we have $K_\ba=(q_m^i)^{-1}(K_{\ba'})$. 
\end{lem}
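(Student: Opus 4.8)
The plan is to reduce the claimed identity to a flag-by-flag comparison through the morphism $q_m^i$ and then match the defining conditions of $\ba$ against those of $\ba'$ one type at a time. First I would observe that both sides live inside $X^i_m$: since $\ba$ has a cup $i\CupConnect(i+1)$, condition~(i) of Theorem~\ref{thm:algebraic_main_result}\ref{second_part_thm} forces $F_{i+1}=z^{-1}F_{i-1}$ on every flag in $K_\ba$, so $K_\ba\subseteq X^i_m$, while $(q_m^i)^{-1}(K_{\ba'})\subseteq X^i_m$ by the very definition of $q_m^i$. Thus it is enough to prove that a flag $(F_1,\ldots,F_m)\in X^i_m$ lies in $K_\ba$ if and only if its image $(G_1,\ldots,G_{m-2}):=q_m^i(F_1,\ldots,F_m)$ lies in $K_{\ba'}$.

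The technical heart will be a reconstruction statement on $X^i_m$. On $X^i_m$ we have $F_{i+1}=z^{-1}F_{i-1}\supseteq z^{-1}(0)=\ker z_m$ (here Remark~\ref{y_remark} is used to guarantee $F_{i-1}\subseteq\im z$, so that the equation is dimensionally consistent), hence $\ker z_m\subseteq F_{i+1}\subseteq\cdots\subseteq F_m$ and therefore $F_l=z^{-1}(zF_l)$ for all $l\geq i+1$. Combined with formula~\eqref{eq:q_i_morphism} this shows that a flag in $X^i_m$ is recovered from its $q_m^i$-image by $F_l=G_l$ for $l\leq i-1$ and $F_l=z^{-1}G_{l-2}$ for $l\geq i+1$, with $F_i$ the only free datum (the $\mathbb P^1$ that $q_m^i$ forgets). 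I would also record the combinatorial dictionary between $\ba$ and $\ba'$: deleting the cup $i\CupConnect(i+1)$ and relabelling sends a vertex $l<i$ of $\ba$ to vertex $l$ of $\ba'$ and a vertex $l>i+1$ to vertex $l-2$; no ray is removed, so the ray-counts $\rho$ are preserved under this relabelling; and half-cups impose no condition and can be ignored. (One should note in passing that $\ba'$ indeed has $m-2$ vertices and $m-k-1$ cups-plus-half-cups, so lies in $\mathbb B^{((k-1),(m-k-1))}$ as asserted.)

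With this in place the verification proceeds by cases, using only $z(e_j)=e_{j-1}$ and the elementary fact that for subspaces $W_1,W_2\supseteq\ker z_m$ one has $W_1=W_2\iff zW_1=zW_2$. The cup $i\CupConnect(i+1)$ itself contributes precisely the equation $F_{i+1}=z^{-1}F_{i-1}$ defining $X^i_m$. A condition of $\ba$ at a vertex $<i$ involves only $F_0,\ldots,F_{i-1}=G_0,\ldots,G_{i-1}$ and is literally the corresponding condition of $\ba'$. For a cup $p\CupConnect q$ nested around the deleted cup ($p<i<i+1<q$), applying $z$ once to $F_q=z^{-\frac12(q-p+1)}F_{p-1}$ and using $G_{q-2}=zF_q$, $G_{p-1}=F_{p-1}$ gives $G_{q-2}=z^{-\frac12((q-2)-p+1)}G_{p-1}$, exactly the condition of $\ba'$ for $p\CupConnect(q-2)$; for a cup lying entirely to the right of the deleted cup, and for a ray at a vertex $l>i+1$, the same manoeuvre works, with $z\,\spn(e_j)=\spn(e_{j-1})$ turning the index $\tfrac12(l+\rho_\ba(l))$ into $\tfrac12((l-2)+\rho_{\ba'}(l-2))$. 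Each of these translations is reversible — passing back is exactly where the reconstruction $F_l=z^{-1}G_{l-2}$ is needed — so one concludes $(F_\bullet)\in K_\ba\iff q_m^i(F_\bullet)\in K_{\ba'}$, which is the lemma.

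The main obstacle is not any single computation but the reconstruction step: one must be sure that on $X^i_m$ every $F_l$ with $l\neq i$ is genuinely determined by $q_m^i(F_\bullet)$, which hinges on $\ker z_m\subseteq F_{i+1}$ and hence on the cup condition together with Remark~\ref{y_remark}; once that is secured, the remaining work is the routine (but slightly fiddly) bookkeeping of the index shifts $l\mapsto l-2$ and $n\mapsto n-1$ in the half-integer exponents, which I would present compactly rather than case by case.
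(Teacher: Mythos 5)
Your proof is correct and follows essentially the same route as the paper's: both reduce the equality to a flag-by-flag comparison of the conditions imposed by $\ba$ and $\ba'$ through $q_m^i$, using the observation that $\ker z\subseteq F_{i+1}$ on $X^i_m$ so that applying $z$ (resp.\ $z^{-1}$) translates conditions between the two sides reversibly. The only difference is one of exposition: the paper spells out the ray case and defers the cup case to \cite[Lemma 3.2]{Weh09}, whereas you make the reconstruction step $F_l=z^{-1}G_{l-2}$ explicit and carry out the cup bookkeeping yourself.
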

\begin{proof}
We have to show that a flag $(F_1,\ldots,F_m)\in Y_m$ satisfies the conditions (i), (ii) from Theorem~\ref{thm:algebraic_main_result}\ref{second_part_thm} with respect to the cup diagram $\ba$ if and only if 
\[
q_m^i(F_1,\ldots,F_m)=(F_1,\ldots,F_{i-1},zF_{i+2},\ldots,zF_m)\in Y_{m-2}
\]
satisfies these conditions with respect to $\ba'$. 

In order to deal with condition (ii) suppose there is a ray connected to vertex $j$ in $\ba$. If $j\leq i-1$ there is nothing to show. If $j>i+1$ we need to show that
\[
F_j = F_{j-1}+\spn\left(e_{\frac{1}{2}(j-\alpha)}\right) \hspace{1.6em} \Leftrightarrow \hspace{1.6em} zF_j = zF_{j-1}+\spn\left(e_{\frac{1}{2}(j-2-\alpha)}\right).
\] 
But this follows by applying $z$ (resp.\ $z^{-1}$) to the left (resp.\ right) side of the equivalence.

For the proof involving condition (i) we refer to the proof of \cite[Lemma 3.2]{Weh09} for the precise argument. 
\end{proof}

\begin{prop}\label{prop:K_a_contained}
If $\ba\in\mathbb B^{((k),(m-k))}$, we have $K_\ba\subseteq\mathcal Fl^{((k),(m-k))}$.
\end{prop}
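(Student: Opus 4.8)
The plan is to show that the defining conditions (i) and (ii) of $K_{\ba}$ force any flag $(F_1,\ldots,F_m)\in K_{\ba}$ to satisfy the single condition $e_k\in F_m$ from the identification~\eqref{eq:exotic_embedding} in Proposition~\ref{prop:exotic_embedding}, so that $K_{\ba}\subseteq\mathcal Fl^{((k),(m-k))}$. Since $K_{\ba}\subseteq Y_m$ by construction, it suffices to verify the membership $e_k\in F_m$.

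First I would set up an induction on $m-k$, i.e. on the number of cups plus half-cups in $\ba$. In the base case $m-k=0$ the cup diagram $\ba$ consists of $m$ rays only; then condition (ii) applied successively for $i=1,\ldots,m$ (with $\rho_{\ba}(i)=i$ since every vertex to the left is a ray) gives $F_i=F_{i-1}+\spn(e_i)$, hence $F_m=\spn(e_1,\ldots,e_m)$, which certainly contains $e_k$. For the inductive step I would distinguish two cases according to the shape of $\ba$ near the right-hand boundary. If $\ba$ contains a cup connecting two adjacent vertices $i$ and $i+1$ (such an innermost cup always exists unless $\ba$ consists only of rays and half-cups), let $\ba'\in\mathbb B^{((k-1),(m-k-1))}$ be the diagram obtained by deleting this cup. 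By Lemma~\ref{lem:induction_alg_components} we have $K_{\ba}=(q_m^i)^{-1}(K_{\ba'})$, and by the inductive hypothesis $K_{\ba'}\subseteq\mathcal Fl^{((k-1),(m-k-1))}$, so every flag in $K_{\ba'}$ contains $e_{k-1}$ (the $(k-1)$st Jordan basis vector of $z$ on $E_{m-2}$). I would then trace through the morphism $q_m^i$ of~\eqref{eq:q_i_morphism}: if $(F_1,\ldots,F_m)\in K_{\ba}$ then $zF_m$ is the last space of $q_m^i(F_1,\ldots,F_m)\in K_{\ba'}$, so $e_{k-1}\in zF_m$; combined with $F_{i+1}=z^{-1}F_{i-1}$ and a short index bookkeeping (the Jordan basis vector $e_{k-1}$ on $E_{m-2}$ corresponds to $e_{k}$ or $e_{k-1}$ on $E_m$ depending on whether $i+1\le k$ or $i+1>k$), I would conclude $e_k\in F_m$. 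The case analysis here parallels the parameters in the proof of Proposition~\ref{prop:exotic_embedding}.

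The remaining case is when $\ba$ has no adjacent cup, which forces $\ba$ to consist of some initial rays followed by half-cups (and possibly nested cups whose innermost cup is adjacent, already handled, so really: rays then half-cups). If vertex $m$ is connected to a half-cup, the condition $e_k\in F_m$ must be extracted purely from condition (ii) for the rays: these give $F_r=\spn(e_1,\ldots,e_r)$ where $r$ is the number of rays, and since $k\le r$ (because there are $m-k$ cups and half-cups, all of which sit to the right of all rays in this configuration, forcing $r=k$ exactly when there are no cups, or more generally $r\ge k$), we get $e_k\in F_r\subseteq F_m$. I would phrase this uniformly by noting that in any $\ba\in\mathbb B^{((k),(m-k))}$ vertex $k$ is either the endpoint of a ray or lies to the left of some ray endpoint $\ge k$, hence $e_k\in F_k\subseteq F_m$ whenever vertex $k$ is ``ray-accessible''; the only subtlety is when vertex $k$ is itself under a cup or half-cup, which is precisely where the inductive reduction via Lemma~\ref{lem:induction_alg_components} is needed.

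The main obstacle I anticipate is the index bookkeeping in the inductive step: tracking how the distinguished vector $e_k$ transforms under the map $q_m^i$ (which applies $z$ to the spaces $F_{i+2},\ldots,F_m$ and drops two dimensions), and verifying that condition (ii)'s shift function $\rho_{\ba}$ is compatible with $\rho_{\ba'}$ under the deletion of the adjacent cup. This is essentially the same combinatorial compatibility already checked for rays in the proof of Lemma~\ref{lem:induction_alg_components}, so I expect it to go through, but it requires care to handle both the subcase $i+1\le k$ (where the distinguished vector shifts index) and $i+1>k$ (where it does not), mirroring Cases 1 and 2 in the proof of Proposition~\ref{prop:exotic_embedding}.
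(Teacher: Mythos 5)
Your overall structure matches the paper's proof: induct on the presence of cups, treat the cups-free case (rays followed by half-cups) as the base, and for the inductive step delete an adjacent cup at $i,i+1$ and pass through $q_m^i$ via Lemma~\ref{lem:induction_alg_components}. However, the ``index bookkeeping'' you describe in the inductive step is based on a misconception and should be removed. The Jordan basis $e_1,\ldots,e_N,f_1,\ldots,f_N$ of $z$ on $\mathbb C^{2N}$ is fixed once and for all, and $q_m^i$ only applies $z$ to spaces of a flag and forgets two of them; it does not re-index the basis. Consequently $e_{k-1}$ is \emph{the same vector} whether regarded as a basis vector of $E_{m-2}$ or of $E_m$, and there is no case distinction between $i+1\le k$ and $i+1>k$.

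The closing step is actually simpler than you anticipate and uses the ingredient you already named. By the inductive hypothesis $e_{k-1}\in zF_m$, so $e_{k-1}=z(v)$ for some $v\in F_m$. Since $z^{-1}(e_{k-1})=e_k+\ker z$ and $\ker z=\spn(e_1,f_1)$, we may write $v=e_k+\alpha e_1+\beta f_1$. The cup relation you cite gives
\[
\ker z = z^{-1}(0)\subseteq z^{-1}F_{i-1}=F_{i+1}\subseteq F_m,
\]
so $\alpha e_1+\beta f_1\in F_m$ and therefore $e_k=v-(\alpha e_1+\beta f_1)\in F_m$. This is exactly the paper's argument; you had the right pieces ($e_{k-1}\in zF_m$ and $F_{i+1}=z^{-1}F_{i-1}$) but needed to combine them by absorbing the $\ker z$-ambiguity of $v$ into $F_m$ rather than by shifting indices. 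Your handling of the cups-free case is also slightly garbled (you say $r\ge k$ rays, but a cups-free diagram in $\mathbb B^{((k),(m-k))}$ has exactly $r=k$ rays followed by $m-k$ half-cups, since rays cannot sit to the right of a half-cup without crossing); the conclusion $F_k=\spn(e_1,\ldots,e_k)$ and hence $e_k\in F_m$ is nevertheless correct.
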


\begin{proof}
We proceed by induction on the number of cups in $\ba$. By Proposition~\ref{prop:exotic_embedding} it suffices to show that $e_k\in F_m$ for all $(F_1,\ldots,F_m)\in K_{\ba}$.

\medskip
{\it Induction Start.} Let $\ba\in\mathbb B^{((k),(m-k))}$ be the cup diagram without any cups contained in $\mathbb B^{((k),(m-k))}$, i.e., the one given by $k$ subsequent rays followed by $m-k$ half-cups:
\begin{equation}\label{eq:induction_start_diagram}
\begin{tikzpicture}[baseline={(0,0.5)}, scale=1, xscale=1, yscale=1]
\draw[dotted] (0.5,-0.1) -- (0.5,1) -- (5.5,1) -- (5.5,-0.1) -- cycle;

\draw[thick] (1,1) -- +(0,-1.1);
\draw[thick] (1.5,1) -- +(0,-1.1);
\draw[thick] (2.5,1) -- +(0,-1.1);

\draw[thick] (3,1) to [out=270,in=180](5.5,0);
\draw[thick] (4,1) to [out=270,in=180](5.5,0.2);
\draw[thick] (4.5,1) to [out=270,in=180](5.5,0.4);

\foreach \x in {1,1.5,2.5,3,4,4.5}{
\fill (\x,1) circle (2pt);}

\node at (2,0.9) {\ldots};
\node at (3.5,0.9) {\ldots};
\end{tikzpicture}
\end{equation}

Given a flag $(F_1,\ldots,F_m)\in K_\ba$ it follows from the relations in Theorem~\ref{thm:algebraic_main_result}\ref{second_part_thm} that $F_k=\spn(e_1,\ldots,e_k)$. In particular, we have $e_k\in F_k\subseteq F_m$ which implies $(F_1,\ldots,F_m)\in\mathcal Fl^{((k),(m-k))}$. 

\medskip
{\it Inductive Step.} Assume there exists a cup in $\ba\in\mathbb B^{((k),(m-k))}$ and fix a cup connecting neighboring vertices $i$ and $i+1$. Let $\ba'\in\mathbb B^{((k-1),(m-k-1))}$ denote the cup diagram obtained by removing this cup. 

Given an arbitrary flag $(F_1,\ldots,F_m)\in K_\ba$, we have 
\[
q_m^i(F_1,\ldots,F_m)=\left(F_1,\ldots,F_{i-1},zF_{i+2},\ldots,zF_m\right)\in K_{\ba'}
\]
by Lemma~\ref{lem:induction_alg_components}. By induction we assume that $K_{\ba'}\subseteq\mathcal Fl^{((k-1),(m-k-1))}$. In particular, we have $e_{k-1}\in zF_m$, i.e., $e_{k-1}=z(v)$ for some $v\in F_m$. Any such $v\in F_m$ must be of the form $v=e_k+\alpha e_1+\beta f_1$ for $\alpha,\beta\in\mathbb C$. Since $\spn(e_1,f_1)=\ker(z)=z^{-1}(0)\subseteq z^{-1}F_{i-1}=F_{i+1}\subseteq F_m$ we see that $v\in F_m$ implies $e_k\in F_m$. But $e_k\in F_m$ means $(F_1,\ldots,F_m)\in\mathcal Fl^{((k),(m-k))}$. 
\end{proof}

\begin{proof}[Proof (Theorem~\ref{thm:algebraic_main_result}).]
We first show that $K_\ba$ is an $(m-k)$-fold iterated $\mathbb P^1$-bundle. This follows as in \cite[Proposition 5.1]{Fun03} (see also \cite[Section 8]{Sch12}). We briefly recall the argument in our setup. Let $i_1<i_2<\ldots<i_{m-k}$ denote the vertices connected to either a left endpoint of a cup or a half-cup in $\ba$. Note that the space $F_{i_1-1}$ is the same for every flag $(F_1,\ldots,F_m)\in K_\ba$ because each vertex to the left of $i$ is connected to a ray and successively applying relation (ii) in Theorem~\ref{thm:algebraic_main_result}~\ref{second_part_thm} determines this space uniquely. Hence, we can define the fiber bundle
\[
p_1\colon K_\ba\to \mathbb P(z^{-1}F_{i_1-1}/F_{i_1-1})\cong\mathbb P^1\,,\,\,(F_1,\ldots,F_m)\mapsto F_{i_1}/F_{i_1-1}
\] 
which has typical fiber $X_2$ consisting of all flags in $(F_1,\ldots,F_m)\in K_\ba$ with $F_{i_1}$ (and $F_j$, if $(i_1+1)$ and $j$ are connected by a cup) fixed. Now we repeat the above construction replacing $X_1$ by $X_2$ and the vertex $i_1$ by $i_2$ and continue until we have exhausted all of the vertices $i_1<\ldots<i_{m-k}$.

In order to prove parts~\ref{second_part_thm} and~\ref{third_part_thm} of Theorem~\ref{thm:algebraic_main_result} we first note that $K_\ba$ is smooth and connected since it is an $(m-k)$-fold iterated fiber bundle over $\mathbb P^1$. 
In particular, $K_\ba$ is irreducible (see, e.g.,~\cite[p.\ 21]{GH94}). By Proposition~\ref{prop:K_a_contained} the variety $K_\ba$ is contained in $\mathcal Fl^{((k),(m-k))}$ and its dimension equals $m-k$ which is the dimension of $\mathcal Fl^{((k),(m-k))}$ by~\cite[Theorem 2.12]{NRS16}. Hence, $K_\ba$ is an irreducible component of the (embedded) exotic Springer fiber $\mathcal Fl^{((k),(m-k))}$. This proves parts~\ref{second_part_thm} and~\ref{third_part_thm} of Theorem~\ref{thm:algebraic_main_result}. 

By Proposition~\ref{prop:parametrization_of_components} and Lemma~\ref{lem:bijection_tableaux_cups} we know that the cup diagrams in $\mathbb B^{((k),(m-k))}$ are in bijective correspondence with the irreducible components of $\mathcal Fl^{((k),(m-k))}$. Since the varieties $K_\ba$ are different for different $\ba\in\mathbb B^{((k),(m-k))}$, we see that the map $\ba\mapsto K_\ba$ explicitly realizes this bijection which proves part~\ref{first_part_thm} of the theorem.
\end{proof}

\begin{ex}
For the cup diagrams from Example~\ref{ex:cup_diagrams}, Theorem~\ref{thm:algebraic_main_result}\ref{second_part_thm} yields the following sets of flags
\begin{itemize}
\item $K_\ba=\big\{F_1 \subseteq\spn(e_1,f_1)\subseteq\spn(e_1,e_2,f_1)\subseteq\spn(e_1,e_2,e_3,f_1)\big\}$,
\item $K_\bb=\big\{\spn(e_1)\subseteq F_2\subseteq\spn(e_1,e_2,f_1)\subseteq\spn(e_1,e_2,e_3,f_1)\big\}$,
\item $K_{\bf c}=\big\{\spn(e_1)\subseteq\spn(e_1,e_2)\subseteq F_3\subseteq\spn(e_1,e_2,e_3,f_1)\big\}$,
\item $K_{\bf d}=\big\{\spn(e_1)\subseteq\spn(e_1,e_2)\subseteq\spn(e_1,e_2,e_3)\subseteq F_4\big\}$,
\end{itemize}
each of which is an irreducible component of $\mathcal Fl^{((3),(1))}$ isomorphic to $\mathbb P^1$.
\end{ex}

\section{Topology of exotic Springer fibers}\label{sec:Topological_Springer_fiber}

The purpose of this section is to define a simple topological model for the exotic Springer fiber $\mathcal Fl^{((k),(m-k))}$, i.e., we construct a topological space $\mathcal S^{((k),(m-k))}$ homeomorphic to $\mathcal Fl^{((k),(m-k))}$ which can be described explicitly using the (one-boundary) cup diagrams (see Theorem~\ref{thm:main_result_1}). As an application, we obtain a diagrammatic description of the pairwise intersections of the irreducible components of $\mathcal Fl^{((k),(m-k))}$ (see Theorem~\ref{thm:intersection_of_components}).  

\subsection{A topological model}

Let $\mathbb S^2=\{(a,b,c)\in\mathbb R^3\mid a^2+b^2+c^2=1\}\subseteq \mathbb R^3$ be the two-dimensional standard unit sphere with north pole $p=(0,0,1)$. Given a cup diagram $\ba\in\mathbb B^{((k),(m-k))}$, define 
\[
S_\ba=\big\{(x_1,\ldots,x_m)\in\left(\mathbb S^2\right)^m\mid x_j=-x_i\text{ if }i\CupConnect j,\text{ and }x_i=p\text{ if }\RayConnect\big\}.
\]
Note that there are no relations involving coordinates indexed by a vertex connected to a half-cup. Note that $S_\ba$ is homeomorphic to a product of two-spheres in which each cup and half-cup in $\ba\in\mathbb B^{((k),(m-k))}$ contributes exactly one sphere. The $((k),(m-k))$ {\it topological exotic Springer fiber} $\mathcal S^{((k),(m-k))}$ is defined as the union 
\[
\mathcal S^{((k),(m-k))} := \bigcup_{\ba \in \mathbb B^{((k),(m-k))}}S_\ba \subseteq\left(\mathbb S^2\right)^m.
\]

In order to facilitate the comparison of $\mathcal S^{((k),(m-k))}$ and $\mathcal Fl^{((k),(m-k))}$, we first rewrite the definition of $S_\ba$ as a submanifold of a product of projective spaces instead of two-spheres (see Lemma~\ref{lem:sphere_vs_projective_space} below). Let $e,f$ be the standard basis of $\mathbb C^2$. Consider the stereographic projection 
\[
\sigma\colon\mathbb S^2\backslash\{p\} \to \mathbb C; \,\hspace{.1cm}(a,b,c) \mapsto \frac{a}{1-c}+\frac{b}{1-c}\sqrt{-1},
\]
and its analog for projective space
\[
\theta\colon\mathbb P^1\backslash\{\spn(e)\} \to \mathbb C; \,\hspace{.1cm}\spn(\alpha e+\beta f) \mapsto \frac{\alpha}{\beta}. 
\]
We can use $\sigma$ and $\theta$ to define a diffeomorphism $\gamma\colon\mathbb P^1 \to \mathbb S^2$ by
\[
\spn(\alpha e+\beta f) \mapsto \begin{cases}
  \sigma^{-1}\left(\theta\left(\spn(\alpha e+\beta f)\right)\right) & \text{if }\spn(\alpha e+\beta f)\neq \spn(e),\\
  p=(0,0,1) & \text{if }\spn(\alpha e+\beta f)=\spn(e).
\end{cases}
\]
This induces a diffeomorphism $\gamma_m \colon\left(\mathbb P^1\right)^m \to \left(\mathbb S^2\right)^m$ on the $m$-fold products by setting $\gamma_m(l_1,\ldots,l_m):=\left(\gamma(l_1),\ldots,\gamma(l_m)\right)$.
 
\begin{lem} \label{lem:sphere_vs_projective_space}
There is an equality of sets $\gamma_m(T_\ba)=S_\ba$ for every $\ba\in\mathbb B^{((k),(m-k))}$, where $T_\ba\subseteq\left(\mathbb P^1\right)^m$ is defined by
\[
T_\ba:=\big\{(l_1,\ldots,l_m)\in\left(\mathbb P^1\right)^m \mid l^\perp_i=l_j \text{ if }i\CupConnect j\text{ and }l_i=\spn(e) \text{ if }\text{ if }\RayConnect\big\},
\]
where the orthogonal complement is taken with respect to the standard Hermitian form on $\mathbb C^2$ defined by declaring the standard basis $e,f$ to be an orthonormal basis. 
\end{lem}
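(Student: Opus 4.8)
The plan is to verify the set equality $\gamma_m(T_\ba) = S_\ba$ coordinate by coordinate, reducing everything to the single-factor case. Since $\gamma_m$ acts as $\gamma$ on each of the $m$ factors independently and the defining conditions of both $T_\ba$ and $S_\ba$ only couple coordinates in pairs (those connected by a cup) or constrain single coordinates (those connected by a ray), it suffices to check: (1) for a ray at vertex $i$, that $\gamma(\spn(e)) = p$; and (2) for a cup $i \CupConnect j$, that $\gamma(l^\perp) = -\gamma(l)$ for all $l \in \mathbb P^1$. The vertices attached to half-cups carry no conditions on either side, so they contribute freely to both sets and need no argument. Condition (1) is immediate from the definition of $\gamma$, which sends $\spn(e)$ to $p = (0,0,1)$.

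The substance is therefore condition (2): I would show that $\gamma$ intertwines the orthogonal-complement involution $l \mapsto l^\perp$ on $\mathbb P^1$ (with respect to the standard Hermitian form) with the antipodal map $x \mapsto -x$ on $\mathbb S^2$. First I would record that the orthogonal complement of $\spn(\alpha e + \beta f)$ is $\spn(-\bar\beta e + \bar\alpha f)$, since $\langle \alpha e + \beta f, -\bar\beta e + \bar\alpha f\rangle = -\alpha\beta + \beta\alpha = 0$. Under the chart $\theta$, a line $\spn(\alpha e + \beta f)$ with $\beta \neq 0$ maps to $\alpha/\beta =: w \in \mathbb C$, and its orthogonal complement maps to $-\bar\beta/\bar\alpha = -1/\bar w$ (valid when $\alpha \neq 0$; the boundary cases $w = 0$, i.e.\ the line $\spn(f)$, and $\spn(e)$ itself are handled separately, sending them to each other and matching the north/south poles). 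So in the coordinate $w$, the complement involution is $w \mapsto -1/\bar w$. On the sphere side, a direct computation with the stereographic projection $\sigma$ shows that the antipodal map $(a,b,c) \mapsto (-a,-b,-c)$ corresponds precisely to $z \mapsto -1/\bar z$ in the coordinate $z = \sigma(a,b,c) = (a + b\sqrt{-1})/(1-c)$: indeed $\sigma(-a,-b,-c) = (-a - b\sqrt{-1})/(1+c)$, and multiplying numerator and denominator of $-1/\bar z = -(1-c)/(a - b\sqrt{-1})$ by $(a + b\sqrt{-1})$ and using $a^2 + b^2 = 1 - c^2$ gives exactly this. Since $\gamma = \sigma^{-1}\circ\theta$ away from $\spn(e)$, the two involutions match under $\gamma$ on the common domain, and the poles are matched by inspection.

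Having established (1) and (2), I would assemble the proof of the lemma: given $(l_1,\dots,l_m) \in T_\ba$, apply $\gamma$ in each coordinate; the ray conditions $l_i = \spn(e)$ become $\gamma(l_i) = p$ by (1), and the cup conditions $l_i^\perp = l_j$ become $\gamma(l_j) = \gamma(l_i^\perp) = -\gamma(l_i)$ by (2), so $\gamma_m(l_1,\dots,l_m) \in S_\ba$; this shows $\gamma_m(T_\ba) \subseteq S_\ba$. The reverse inclusion follows identically because $\gamma$ is a bijection and both characterizations involved (the antipodal/complement involution and the poles $p \leftrightarrow \spn(e)$) are equivalences rather than one-way implications: any $(x_1,\dots,x_m) \in S_\ba$ is $\gamma_m$ of $(\gamma^{-1}(x_1),\dots,\gamma^{-1}(x_m))$, which lies in $T_\ba$ by the same two facts read backwards. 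Hence $\gamma_m(T_\ba) = S_\ba$.

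The only mildly technical point is the explicit verification in (2) that the antipodal map on $\mathbb S^2$ becomes $z \mapsto -1/\bar z$ under stereographic projection and that this agrees with the complement involution on $\mathbb P^1$ in the chart $\theta$; this is a short but slightly fiddly computation involving the Hermitian form and the identity $a^2 + b^2 + c^2 = 1$, and one must be careful to treat the two excluded points (the north pole $p$, i.e.\ $\spn(e)$, and its antipode/orthogonal complement) by hand rather than via the charts. Everything else is a purely formal coordinatewise bookkeeping argument once this single geometric fact is in place.
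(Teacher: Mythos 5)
Your proof is correct and carries out precisely the coordinatewise verification that the paper's phrase ``this follows from the definition of $\gamma_m$'' gestures at: the key computation that $\gamma$ intertwines $l\mapsto l^\perp$ with the antipodal map, i.e.\ that $-1/\bar z$ under stereographic projection corresponds to $(a,b,c)\mapsto(-a,-b,-c)$, is carried out correctly, as is the pole matching $\spn(e)\leftrightarrow p$ and $\spn(f)\leftrightarrow -p$. This is the same (and essentially the only) approach; you have simply supplied the details the authors chose to omit.
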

\begin{proof} This follows from the definition of $\gamma_m$. We omit the details.
\end{proof}

\subsection{An explicit homeomorphism}

Recall the definition of the variety $Y_m$ in (\ref{eq:Y_i}) from the beginning of Section~\ref{sec:structure_irred_comp}. In order to state the next proposition we equip $\mathbb C^{2N}$ with a Hermitian form by declaring the Jordan basis $e_1,\ldots,e_N,f_1,\ldots,f_N$ in (\ref{eq:Jordan_basis_of_z}) to be an orthonormal basis. Moreover, let $C \colon \mathbb C^{2N} \to \mathbb C^2$ be the linear map defined by $C(e_i)=e$ and $C(f_i)=f$, $i \in \{1,\ldots,N\}$. The following proposition allows us to compare the submanifold $T_\ba$ from Lemma~\ref{lem:sphere_vs_projective_space} with the irreducible component $K_\ba$ of $\mathcal Fl^{((k),(m-k))}$ from Theorem~\ref{thm:algebraic_main_result}.

\begin{prop}[{\cite[Theorem 2.1]{CK08}}] \label{homeo}
The map $\phi_m: Y_m \rightarrow (\mathbb P^1)^m$ given by the assignment
\[
(F_1,\ldots,F_m) \mapsto \left(C(F_1),C(F_2 \cap F_1^\perp),\ldots,C(F_m \cap F_{m-1}^\perp)\right)
\]
is a diffeomorphism, where the orthogonal complement is taken with respect to the Hermitian form of $\mathbb C^{2N}$. 
\end{prop}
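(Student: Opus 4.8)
The plan is to show that $\phi_m$ is well-defined, smooth, bijective with smooth inverse, by exhibiting an explicit inverse and invoking the structure of $Y_m$. Since this is quoted as \cite[Theorem 2.1]{CK08}, I would present a proof sketch following Cautis--Kamnitzer, adapted to the present notation.

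First I would check that $\phi_m$ is well-defined, i.e.\ lands in $(\mathbb P^1)^m$. The key observation is that for a flag $(F_1,\ldots,F_m)\in Y_m$, each successive quotient $F_i\cap F_{i-1}^\perp$ is a one-dimensional subspace of $\mathbb C^{2N}$ (this uses that the Hermitian form is positive definite, so $F_i = F_{i-1}\oplus(F_i\cap F_{i-1}^\perp)$ is an orthogonal direct sum). It then remains to verify that the restriction of $C$ to such a line is injective, so that $C(F_i\cap F_{i-1}^\perp)$ is genuinely a point of $\mathbb P^1$ and not $\{0\}$. This is where the hypothesis $zF_i\subseteq F_{i-1}$ enters: by Remark~\ref{y_remark} every $F_i$ lies in $E_m=\operatorname{span}(e_1,\ldots,e_m,f_1,\ldots,f_m)$, and the condition $zF_i\subseteq F_{i-1}$ forces the line $F_i\cap F_{i-1}^\perp$ to be spanned by a vector of the form $\alpha e_j+\beta f_j$ for suitable $j$ (the "new" Jordan step), with $(\alpha,\beta)\neq(0,0)$; such a vector maps to $\alpha e+\beta f\neq 0$ under $C$. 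Smoothness of $\phi_m$ is then clear since orthogonal projection, intersection of a flag with a fixed-dimension subspace, and $C$ are all smooth in families.

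Next I would construct the inverse $\psi_m\colon(\mathbb P^1)^m\to Y_m$ explicitly and check $\psi_m\circ\phi_m=\mathrm{id}$ and $\phi_m\circ\psi_m=\mathrm{id}$. Given lines $(l_1,\ldots,l_m)\in(\mathbb P^1)^m$, one reconstructs the flag inductively: having built $F_{i-1}\subseteq E_m$ with $zF_{i-1}\subseteq F_{i-2}$, one writes $l_i=\operatorname{span}(\alpha e+\beta f)$ and sets $F_i=F_{i-1}+\operatorname{span}(w_i)$, where $w_i$ is the (unique up to scalar) vector in $z^{-1}F_{i-1}\cap E_m$ that is orthogonal to $F_{i-1}$ and has "$C$-image proportional to $\alpha e+\beta f$". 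The crucial point to check is that such a $w_i$ exists and is unique: the space $z^{-1}F_{i-1}\cap F_{i-1}^\perp$ is (generically) two-dimensional, spanned by a pair $e_j^{\,\prime}, f_j^{\,\prime}$ in the spirit of the vectors appearing in the proof of Proposition~\ref{prop:exotic_embedding}, and $C$ maps this plane isomorphically onto $\mathbb C^2=\operatorname{span}(e,f)$; hence prescribing the line $l_i$ pins down the line $\operatorname{span}(w_i)$. One then verifies that $zF_i\subseteq F_{i-1}$ holds by construction, so $(F_1,\ldots,F_m)\in Y_m$, and that $\phi_m$ and $\psi_m$ are mutually inverse by unwinding the definitions. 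Smoothness of $\psi_m$ follows because the reconstruction can be carried out locally by smoothly varying linear-algebra operations (choosing orthonormal bases, solving the $2\times 2$ linear system identifying the $C$-image).

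I expect the main obstacle to be the bookkeeping in the inductive construction of $\psi_m$, specifically proving cleanly that the plane $z^{-1}F_{i-1}\cap F_{i-1}^\perp$ is exactly two-dimensional and that $C$ restricts to an isomorphism from it to $\mathbb C^2$ at every flag, uniformly enough to yield smoothness rather than just bijectivity. This is essentially the content already extracted in the proof of Proposition~\ref{prop:exotic_embedding}, where an explicit adapted Jordan-type basis $e_j^{(i)}, f_j^{(i)}$ of a complement of $F_i$ inside $F_i^\perp$ is produced and shown to behave well under $z$; I would reuse that basis, noting that $C(e_j^{(i)})$ and $C(f_j^{(i)})$ span $\mathbb C^2$, to make the identification precise. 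Once that linear-algebra core is in place, the verification that $\phi_m$ and $\psi_m$ are mutually inverse diffeomorphisms is routine, and I would refer to~\cite{CK08} for the complete details rather than reproduce them.
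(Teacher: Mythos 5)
The paper does not prove this statement; it is imported verbatim from \cite[Theorem 2.1]{CK08}, so there is no in-paper proof to compare your sketch against. Your outline --- well-definedness via injectivity of $C$ on the lines $F_i\cap F_{i-1}^\perp$, then an inductive reconstruction yielding an explicit smooth inverse --- is indeed the shape of the Cautis--Kamnitzer argument, so the plan is sound.

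Two points need tightening. First, the assertion that the line $F_i\cap F_{i-1}^\perp$ is ``spanned by a vector of the form $\alpha e_j+\beta f_j$ for suitable $j$'' is not literally true: already for $i=2$ the spanning vector in general mixes the coordinates $e_1,f_1,e_2,f_2$. What does hold (and what you invoke more carefully later) is that $C$ restricts to an isomorphism from the plane $z^{-1}(F_{i-1})\cap F_{i-1}^\perp$ onto $\mathbb C^2$; since $F_i\cap F_{i-1}^\perp$ is a line inside this plane, $C$ is nonzero on it. Note also that this plane is always exactly two-dimensional, not just generically: once $F_{i-1}\subseteq\operatorname{im}(z)$, which holds by Remark~\ref{y_remark}, one has $\dim z^{-1}(F_{i-1})=i+1$ and $F_{i-1}\subseteq z^{-1}(F_{i-1})$, so its Hermitian complement inside $z^{-1}(F_{i-1})$ has dimension~$2$. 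Second, the adapted basis $e_j^{(i)}, f_j^{(i)}$ from the proof of Proposition~\ref{prop:exotic_embedding} is built to be compatible with the symplectic form $\omega_m$ (it spans a complement of $F_i$ inside the $\omega_m$-orthogonal $F_i^\perp$, and the Case~1 recombination is chosen to preserve $\omega_m$), whereas $\phi_m$ uses the Hermitian orthogonal; you cannot reuse that basis as-is but would need the analogous Jordan-adapted basis orthogonalized for the Hermitian form (for instance replacing $\alpha e_j^{(i)}-\beta f_j^{(i)}$ by $\bar\beta e_j^{(i)}-\bar\alpha f_j^{(i)}$). With that substitution the inductive construction of the inverse and the smoothness argument go through, and your sketch matches \cite{CK08}.
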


Using the map $\phi_m$ we can now state the following key proposition.

\begin{prop} \label{prop:preimage_contained}
We have $K_\ba=\phi_m^{-1}(T_\ba)$.
\end{prop}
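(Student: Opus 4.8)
The plan is to verify the set equality $K_\ba = \phi_m^{-1}(T_\ba)$ by translating, term by term, the conditions defining $K_\ba$ in Theorem~\ref{thm:algebraic_main_result}\ref{second_part_thm} into the conditions defining $T_\ba$ in Lemma~\ref{lem:sphere_vs_projective_space}, using the explicit formula for the diffeomorphism $\phi_m$ from Proposition~\ref{homeo}. Both $K_\ba$ and $T_\ba$ are cut out inside $Y_m$ (resp.\ $(\mathbb P^1)^m$) by three families of conditions indexed by the cups, rays, and half-cups of $\ba$; since half-cups impose no condition on either side, it suffices to treat cups and rays. So I would fix a flag $(F_1,\ldots,F_m)\in Y_m$, write $\phi_m(F_1,\ldots,F_m)=(l_1,\ldots,l_m)$ with $l_i=C(F_i\cap F_{i-1}^\perp)$, and show: (1) if $i\CupConnect j$ in $\ba$, then $F_j=z^{-\frac12(j-i+1)}F_{i-1}$ is equivalent to $l_i^\perp=l_j$; and (2) if $\RayConnect$ in $\ba$, then $F_i=F_{i-1}+\spn(e_{\frac12(i+\rho_\ba(i))})$ is equivalent to $l_i=\spn(e)$.

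\textbf{Reduction via the fiber-bundle structure.} Rather than analyzing a general flag directly, I expect the cleanest route is to exploit the iterated $\mathbb P^1$-bundle structure already established in the proof of Theorem~\ref{thm:algebraic_main_result}\ref{third_part_thm}, together with the recursive description $K_\ba = (q_m^i)^{-1}(K_{\ba'})$ from Lemma~\ref{lem:induction_alg_components} when $\ba$ has a cup on neighboring vertices $i,i+1$. The natural approach is induction on the number of cups. In the base case $\ba$ has only rays and half-cups: by the Induction Start in the proof of Proposition~\ref{prop:K_a_contained}, any flag in $K_\ba$ has $F_i = \spn(e_1,\ldots,e_i)$ for $i\le k$, and one computes directly that $C(F_i\cap F_{i-1}^\perp)=C(\spn(e_i))=\spn(e)$, matching the ray condition; the half-cup coordinates range freely over $\mathbb P^1$ on both sides. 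For the inductive step, one picks a cup on adjacent vertices $i,i+1$ (which always exists, since a cup with no nested structure exists), forms $\ba'$ by deleting it, and checks that $\phi_m$ and $\phi_{m-2}$ intertwine $q_m^i$ with the corresponding "forget the $i$-th coordinate, relabel" map on $(\mathbb P^1)^\bullet$ in a way compatible with the cup relation $l_i^\perp=l_j$. This compatibility of $\phi_m$ with $q_m^i$ is exactly the content of~\cite[Theorem 2.1]{CK08} and its proof, so I would cite that; the cup relation $F_{i+1}=z^{-1}F_{i-1}$ translating to $l_i^\perp = l_{i+1}$ is the key computation in the Khovanov--Russell setup and can be quoted from~\cite[Theorem 2.1]{CK08} or reproven by the short linear-algebra argument there.

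\textbf{The main obstacle.} The subtle point is bookkeeping the indices in the ray condition: the vector $e_{\frac12(i+\rho_\ba(i))}$ appearing in relation (ii) has an index that shifts as cups to the left of vertex $i$ are deleted under $q_m^i$, and one must check that this shift is precisely compensated by the relabeling $Y_m\rightsquigarrow Y_{m-2}$ and the corresponding change $\ba\rightsquigarrow\ba'$ so that the ray condition for $\ba$ on $F_i$ is carried to the ray condition for $\ba'$ — this is essentially the computation already carried out for condition (ii) in the proof of Lemma~\ref{lem:induction_alg_components}, and I would lean on it. The other technical nuisance is that deleting a cup on $i,i+1$ shifts all labels $>i+1$ down by $2$, so half-cup vertices, cup endpoints, and rays all get relabeled consistently; verifying that the free $\mathbb P^1$-factors (half-cups) and the determined factors (rays) land correctly under $\phi_m$ is routine but must be stated carefully. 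I do not anticipate any genuine difficulty beyond this index tracking, since the geometric heavy lifting — that $\phi_m$ is a diffeomorphism and intertwines $q_m^i$ appropriately, and that cups correspond to orthogonality — is supplied by Proposition~\ref{homeo}, Lemma~\ref{lem:induction_alg_components}, and~\cite{CK08}.
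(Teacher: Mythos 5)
Your proposal is correct and follows essentially the same route as the paper's proof: induction on the number of cups, with the inductive step handled by Lemma~\ref{lem:induction_alg_components} together with the commutative square intertwining $q_m^i$ and the forget-two-coordinates map under $\phi_m$, $\phi_{m-2}$ (which is exactly~\cite[Theorem 2.1]{CK08}). The only deviation is in the base case: where you propose a direct verification that rays force $l_i=\spn(e)$ and that half-cup coordinates sweep out all of $\mathbb P^1$, the paper instead runs a nested sub-induction on the number of half-cups using the second commutative diagram~\eqref{eq:commutative_diag} involving $\psi_m$ and $g_m$; the paper's version is a little cleaner because it avoids having to argue separately that $\phi_m$ restricted to the ray-constrained locus surjects onto $\{\spn(e)\}^k\times(\mathbb P^1)^{m-k}$, but your direct check does go through once one notes (by induction on $i\le k$, using $F_i\subseteq z^{-1}F_{i-1}$ and the definition of $\phi_m$) that $l_1=\cdots=l_k=\spn(e)$ is in fact \emph{equivalent} to $F_i=\spn(e_1,\ldots,e_i)$ for $i\le k$, not merely implied by it.
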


In order to prove Proposition~\ref{prop:preimage_contained}, we first note the following lemma. (Recall the definitions of the variety $X_i^m\subseteq Y_m$ from (\ref{eq:X_i}) and the morphism of algebraic varieties $q_m^i$ from~(\ref{eq:q_i_morphism})). 

\begin{lem}
The diffeomorphism $\phi_m$ maps $X^i_m$ bijectively to the set
\begin{equation} \label{eq:defi_A}
A_m^i:=\{(l_1,\ldots,l_m)\in(\mathbb P^1)^m\mid l_{i+1}=l_i^\perp\}.
\end{equation}
Moreover, we have a commutative diagram
\begin{equation} \label{eq:comm_diag}
\begin{tikzcd} 
X^i_m \arrow[two heads]{rr}{q_m^i} \arrow{d}{\cong}[left]{\phi_{m}\vert_{X_m^i}} &&Y_{m-2} \arrow{d}{\phi_{m-2}}[left]{\cong}\\
A_m^i \arrow[two heads]{rr}{f^{i}_{m}\vert_{A_m^i}} &&\left(\mathbb P^1\right)^{m-2} 
\end{tikzcd}
\end{equation}
where $f^i_m\colon\left(\mathbb P^1\right)^m \twoheadrightarrow\left(\mathbb P^1\right)^{m-2}$ is the map which forgets coordinates $i$ and $i+1$. The orthogonal complement in (\ref{eq:defi_A}) is taken with respect to the Hermitian structure of $\mathbb C^2$.

Furthermore, we have a commutative diagram
\begin{equation} \label{eq:commutative_diag}
\begin{tikzcd} 
Y_m \arrow[two heads]{rr}{\psi_m} \arrow{d}{\cong}[left]{\phi_m} &&Y_{m-1} \arrow{d}{\phi_{m-1}}[left]{\cong}\\
\left(\mathbb P^1\right)^m \arrow[two heads]{rr}{g_m} &&\left(\mathbb P^1\right)^{m-1} 
\end{tikzcd}
\end{equation}
where $g_m$ (resp.\ $\psi_m$) is the map which forgets the last coordinate (resp.\ vector space of the flag).
\end{lem}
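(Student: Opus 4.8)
The plan is to verify the three claimed properties of $\phi_m$ by direct computation, working from the explicit formula in Proposition~\ref{homeo} and the definitions of $X_m^i$, $q_m^i$, $\psi_m$, and the various forgetful maps. Since $\phi_m$ is already known to be a diffeomorphism, the only content is checking that it intertwines the indicated maps and restricts to the indicated bijection.

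First I would establish that $\phi_m(X_m^i)=A_m^i$. The key elementary observation is that for a flag $(F_1,\ldots,F_m)\in Y_m$, writing $L_j:=C(F_j\cap F_{j-1}^\perp)$, the condition $F_{i+1}=z^{-1}F_{i-1}$ is equivalent to $L_{i+1}=L_i^\perp$ inside $\mathbb P^1$. This should follow by the same argument as in \cite[\S2]{CK08}: the one-dimensional spaces $F_i\cap F_{i-1}^\perp$ and $F_{i+1}\cap F_i^\perp$ are spanned by vectors of the form $e^{(r)}+\mu f^{(r)}$ (in the notation of the Jordan basis threads), and the defining relation forces these to be "opposite" in the relevant $\mathbb{C}^2$, which $C$ translates into Hermitian orthogonality since $C$ sends the $e$-thread to $e$ and the $f$-thread to $f$. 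Here I would cite \cite[Theorem 2.1]{CK08} or its proof for the detailed linear algebra rather than reproduce it.

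Next I would check commutativity of the two squares. For diagram~(\ref{eq:comm_diag}): starting from $(F_1,\ldots,F_m)\in X_m^i$, apply $q_m^i$ to get $(F_1,\ldots,F_{i-1},zF_{i+2},\ldots,zF_m)$, then apply $\phi_{m-2}$; going the other way, apply $\phi_m$ then forget coordinates $i,i+1$. The coordinates with index $<i$ agree on the nose. For index $j>i+1$, one must check $C\bigl((zF_{j})\cap(zF_{j-1})^\perp\bigr)=C\bigl(F_j\cap F_{j-1}^\perp\bigr)$ under the assumption $F_{i+1}=z^{-1}F_{i-1}$; this is where the hypothesis $F_{i+1}=z^{-1}F_{i-1}$ is used to guarantee $z$ is injective on the relevant complements and that $z$ preserves the Hermitian-orthogonality relations compatibly with $C$, again following \cite[\S2]{CK08}. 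Diagram~(\ref{eq:commutative_diag}) is easier: $\psi_m$ just drops $F_m$, $g_m$ just drops the last $\mathbb P^1$-coordinate, and the first $m-1$ coordinates of $\phi_m(F_1,\ldots,F_m)$ depend only on $F_1,\ldots,F_{m-1}$, so the square commutes essentially by inspection.

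The main obstacle will be the index-$j>i+1$ verification in diagram~(\ref{eq:comm_diag}): one needs that applying $z$ to a flag in $X_m^i$ does not disturb the Hermitian-complement data recorded by $\phi$, i.e. $z$ takes $F_j\cap F_{j-1}^\perp$ to something whose image under $C$ equals $C(F_j\cap F_{j-1}^\perp)$. This is a compatibility between the nilpotent $z$, the Hermitian form (for which the Jordan basis is orthonormal), and the collapsing map $C$; it holds because $z$ sends $e_r\mapsto e_{r-1}$, $f_r\mapsto f_{r-1}$ and $C$ is constant along each Jordan string, but making this precise with the orthogonal complements requires a short argument. I would handle it by the same reasoning as \cite[\S2, proof of Theorem 2.1]{CK08} and state it as such, since the present setup is a direct translation of theirs; the commutative-diagram bookkeeping is then routine. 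With the square commuting and $\phi_m$, $\phi_{m-2}$, $\phi_{m-1}$ all diffeomorphisms, $f_m^i|_{A_m^i}$ and $g_m$ are automatically surjective (as $q_m^i$ and $\psi_m$ are), completing the proof.
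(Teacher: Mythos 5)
Your proposal is correct and takes essentially the same approach as the paper: the paper's own proof consists of citing \cite[Theorem 2.1]{CK08} (and \cite[Lemma 2.4]{Weh09}) for the first diagram and noting the second diagram is immediate from the definitions, which is exactly what you do, just with the CK08 linear algebra spelled out in a bit more detail.
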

\begin{proof}
For the proof of the commutativity of the first diagram (\ref{eq:comm_diag}) we refer to \cite[Theorem 2.1]{CK08} or \cite[Lemma 2.4]{Weh09}. The commutativity of the second diagram (\ref{eq:commutative_diag}) follows directly from the definitions of the respective maps.
\end{proof}

\begin{proof}[Proof (Proposition~\ref{prop:preimage_contained}).]
We prove the proposition by induction on the number of cups in $\ba$.

\smallskip
{\it Base of Induction.} We first prove Proposition~\ref{prop:preimage_contained} for all cup diagrams $\ba\in\mathbb B^{((k),(m-k))}$ as in (\ref{eq:induction_start_diagram}) consisting of rays and half-cups only. 

We first consider the cup diagram $\ba\in\mathbb B^{((m),(0))}$ consisting of $m$ successive rays. In this case $T_\ba$ consists of a single element $(\spn(e),\ldots,\spn(e))\in\left(\mathbb P^1\right)^m$ whose preimage $(F_1,\ldots,F_m)$ under the diffeomorphism $\phi_m$ is given by the flag 
\[ 
(\spn(e_1),\spn(e_1,e_2),\ldots,\spn(e_1,\ldots,e_m))
\] 
since 
\begin{align*}
\spn(e)&=C(\spn(e_i))=C\left(\spn(e_1,\ldots,e_i)\cap\spn(e_i,\ldots,e_m,f_1,\ldots,f_m)\right)\\
			 &=C(F_i\cap F_{i-1}^\perp).
\end{align*}
The relations in part (ii) of Theorem~\ref{thm:algebraic_main_result} show that this flag is indeed the unique element in $K_\ba$. 

Assume that there exists a half-cup in $\ba\in\mathbb B^{((k),(m-k))}$ (otherwise we are done by the above). Let $\ba'\in\mathbb B^{((k),(m-k-1))}$ be the cup diagram obtained by deleting the half-cup connected to the rightmost vertex $m$. Thus, we have
\begin{equation} \label{eq:preimage_under_q}
K_\ba=\psi_m^{-1}\left(K_{\ba'}\right)=\psi_m^{-1}\left(\phi_{m-2}^{-1}(T_{\ba'})\right)=\phi_m^{-1}\left(g_m^{-1}\left(T_{\ba'}\right)\right)=\phi_m^{-1}(T_\ba),
\end{equation} 
where the first equality is evident, the second one follows from the inductive hypothesis $\phi_{m-2}^{-1}(T_{\ba'})=K_{\ba'}$, the third one follows from the commutativity of the diagram (\ref{eq:commutative_diag}), and the last one is the obvious fact that $g_m^{-1}\left(T_{\ba'}\right)=T_\ba$.

\smallskip
{\it Inductive Step.} Now we assume that there exists at least one cup in $\ba$. Thus, there exists a cup connecting neighboring vertices, say $i$ and $i+1$ (because the diagram is crossingless). Let ${\ba{''}}\in\mathbb B^{((k-1),(m-k-1))}$ be the cup diagram obtained by removing this cup.

By an analogous argument as in the base case of the induction we obtain 
\begin{equation} 
K_\ba=\left(q_m^i\right)^{-1}\left(K_{\ba{''}}\right)=\left(q_m^i\right)^{-1}\left(\phi_{m-2}^{-1}(T_{\ba{''}})\right)=\phi_m^{-1}\left(\left(f_m^i\right)^{-1}\left(T_{\ba{''}}\right)\right)=\phi_m^{-1}(T_\ba),
\end{equation} 
where the first equality follows from Lemma~\ref{lem:induction_alg_components}, the second one from the inductive hypothesis, the third one from the commutativity of the diagram (\ref{eq:comm_diag}), and the last one follows from $\left(f_m^i\right)^{-1}\left(T_{\ba{''}}\right)=T_\ba$ which is evident.
\end{proof}

\begin{thm}\label{thm:main_result_1}
The diffeomorphism $\left(\mathbb S^2\right)^m\xrightarrow{\gamma_m^{-1}}\left(\mathbb P^1\right)^m\xrightarrow{\phi_m^{-1}} Y_m$ restricts to a homeomorphism
\[
\mathcal S^{((k),(m-k))}\xrightarrow\cong\mathcal Fl^{((k),(m-k))}
\]
such that the images of the $S_\ba$ under this homeomorphism are precisely the irreducible components $K_\ba$ of $\mathcal Fl^{((k),(m-k))}$ for all $\ba\in\mathbb B^{((k),(m-k))}$. 
\end{thm}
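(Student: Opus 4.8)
The plan is to assemble the homeomorphism from the pieces already established. By Proposition~\ref{prop:preimage_contained} we have $K_\ba=\phi_m^{-1}(T_\ba)$ for every $\ba\in\mathbb B^{((k),(m-k))}$, and by Lemma~\ref{lem:sphere_vs_projective_space} we have $\gamma_m(T_\ba)=S_\ba$. Hence the composite diffeomorphism $\Phi:=\gamma_m\circ\phi_m\colon Y_m\xrightarrow{\cong}\left(\mathbb S^2\right)^m$ satisfies $\Phi(K_\ba)=S_\ba$. Taking the union over all $\ba\in\mathbb B^{((k),(m-k))}$ and using Theorem~\ref{thm:algebraic_main_result}\ref{first_part_thm} (which identifies the $K_\ba$ with the full list of irreducible components of $\mathcal Fl^{((k),(m-k))}$) gives
\[
\Phi\left(\mathcal Fl^{((k),(m-k))}\right)=\Phi\left(\bigcup_{\ba\in\mathbb B^{((k),(m-k))}}K_\ba\right)=\bigcup_{\ba\in\mathbb B^{((k),(m-k))}}S_\ba=\mathcal S^{((k),(m-k))}.
\]
Since $\Phi$ is a diffeomorphism of the ambient manifolds, its restriction to the subspace $\mathcal Fl^{((k),(m-k))}$ is a homeomorphism onto its image $\mathcal S^{((k),(m-k))}$, and by construction it carries the irreducible component $K_\ba$ homeomorphically onto $S_\ba$.

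\textbf{The one point that needs care} is the claim that $\mathcal Fl^{((k),(m-k))}$ really equals the union $\bigcup_\ba K_\ba$ of its irreducible components — i.e.\ that every point of the exotic Springer fiber lies in some irreducible component. This is immediate for a variety (a point of $\mathcal Fl^{((k),(m-k))}$ lies in some irreducible component of its closure, and the components are the $K_\ba$ by Theorem~\ref{thm:algebraic_main_result}), so there is no genuine obstacle here; one only has to invoke Theorem~\ref{thm:algebraic_main_result}\ref{first_part_thm} together with Proposition~\ref{prop:K_a_contained} to know the $K_\ba$ exhaust $\mathcal Fl^{((k),(m-k))}$ and are contained in it. The statement about images of components then follows formally since $\Phi$ is a bijection and $\Phi(K_\ba)=S_\ba$.

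\textbf{Remark on the main subtlety overall.} The substantive content of the theorem is entirely front-loaded into Proposition~\ref{prop:preimage_contained}, whose proof is the real work (the inductive removal of cups and half-cups, using the diagrams~\eqref{eq:comm_diag} and~\eqref{eq:commutative_diag} from~\cite{CK08} and Lemma~\ref{lem:induction_alg_components}); once that is in hand, and given that both $\gamma_m$ and $\phi_m$ are already known to be diffeomorphisms, the present statement is a short formal deduction. I would therefore present the proof of Theorem~\ref{thm:main_result_1} in just a few lines along the lines above, emphasizing that no new geometric input is required beyond what has already been proved.

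\begin{proof}[Proof (Theorem~\ref{thm:main_result_1}).]
Write $\Phi=\gamma_m\circ\phi_m\colon Y_m\to\left(\mathbb S^2\right)^m$, which is a diffeomorphism as a composition of the diffeomorphisms $\phi_m$ (Proposition~\ref{homeo}) and $\gamma_m$. For each $\ba\in\mathbb B^{((k),(m-k))}$ we have $\Phi(K_\ba)=\gamma_m(\phi_m(K_\ba))=\gamma_m(T_\ba)=S_\ba$ by Proposition~\ref{prop:preimage_contained} and Lemma~\ref{lem:sphere_vs_projective_space}. By Proposition~\ref{prop:K_a_contained} each $K_\ba$ is contained in $\mathcal Fl^{((k),(m-k))}$, and by Theorem~\ref{thm:algebraic_main_result}\ref{first_part_thm} the $K_\ba$ are precisely the irreducible components of $\mathcal Fl^{((k),(m-k))}$; in particular $\mathcal Fl^{((k),(m-k))}=\bigcup_{\ba\in\mathbb B^{((k),(m-k))}}K_\ba$. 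Applying the bijection $\Phi$ yields
\[
\Phi\left(\mathcal Fl^{((k),(m-k))}\right)=\bigcup_{\ba\in\mathbb B^{((k),(m-k))}}\Phi(K_\ba)=\bigcup_{\ba\in\mathbb B^{((k),(m-k))}}S_\ba=\mathcal S^{((k),(m-k))}.
\]
Since $\Phi$ is a homeomorphism of the ambient spaces, its restriction to the subspace $\mathcal Fl^{((k),(m-k))}$ is a homeomorphism onto $\mathcal S^{((k),(m-k))}$. Finally, this restriction sends $K_\ba$ onto $S_\ba$ for every $\ba\in\mathbb B^{((k),(m-k))}$, so the images of the $S_\ba$ under the inverse homeomorphism are exactly the irreducible components $K_\ba$ of $\mathcal Fl^{((k),(m-k))}$.
\end{proof}
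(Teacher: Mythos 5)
Your proof is correct and follows essentially the same approach as the paper: both rely on the key identities $K_\ba=\phi_m^{-1}(T_\ba)$ (Proposition~\ref{prop:preimage_contained}) and $\gamma_m(T_\ba)=S_\ba$ (Lemma~\ref{lem:sphere_vs_projective_space}), together with the fact from Theorem~\ref{thm:algebraic_main_result} that the $K_\ba$ exhaust $\mathcal Fl^{((k),(m-k))}$. The only cosmetic difference is that the paper computes the image of $\mathcal S^{((k),(m-k))}$ under $\phi_m^{-1}\circ\gamma_m^{-1}$ while you compute the image of $\mathcal Fl^{((k),(m-k))}$ under the inverse map, which is the same argument read in the opposite direction.
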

\begin{proof}
The image of $\mathcal S^{((k),(m-k))}\subseteq\left(\mathbb S^2\right)^m$ under the diffeomorphism $\phi_m^{-1}\circ\gamma_m^{-1}$ is given by 
\begin{eqnarray*}
\phi_m^{-1}\left(\gamma_m^{-1}\left(\mathcal S^{((k),(m-k))}\right)\right) 
&=&\bigcup_{\ba \in\mathbb B^{((k),(m-k))}}\phi_m^{-1}\left(\gamma_m^{-1}\left(S_\ba\right) \right)\\ &=&\bigcup_{\ba \in\mathbb B^{((k),(m-k))}}\phi_m^{-1}\left(T_\ba\right) \qquad \text{(by Lemma \ref{lem:sphere_vs_projective_space})} \\
&=&\bigcup_{\ba \in\mathbb B^{((k),(m-k))}}K_\ba.  \qquad \text{(by Proposition~\ref{prop:preimage_contained})}
\end{eqnarray*}
The last line of the above chain of equalities equals $\mathcal Fl^{((k),(m-k))}$ by Theorem~\ref{thm:algebraic_main_result}.

\end{proof}

\begin{ex}\label{ex:topological_Springer_fiber}
The submanifolds of $\left(\mathbb S^2\right)^4$ associated with the cup diagrams in $\mathbb B^{{((3),(1))}}$ (see Example~\ref{ex:cup_diagrams}) are the following:

\medskip

\begin{minipage}{6cm}
\begin{itemize}
\item $S_\ba=\{(x,-x,p,p)\mid x\in\mathbb S^2\}$,
\item $S_\bb=\{(p,x,-x,p)\mid x\in\mathbb S^2\}$,
\end{itemize}
\end{minipage}
\begin{minipage}{6cm}
\begin{itemize}
\item $S_{\bf c}=\{(p,p,x,-x)\mid x\in\mathbb S^2\}$,
\item $S_{\bf d}=\{(p,p,p,x)\mid x\in\mathbb S^2\}$.
\end{itemize}
\end{minipage}

\medskip

\noindent Each of these manifolds is homeomorphic to a two-sphere. Their pairwise intersection is either a point or empty, e.g.\ we have $S_\ba\cap S_\bb=\{(p,-p,p,p)\}$ and $S_\ba\cap S_{\bf c}=\emptyset$ (see also Theorem~\ref{thm:intersection_of_components} below for a more general statement). The ${((3),(1))}$ topological exotic Springer fiber is a chain of four two-spheres:
\[
\mathcal S^{((3),(1))}\hspace{.4em}\cong\hspace{.4em}
\begin{tikzpicture}[baseline={(0,-.1)}, scale=.6]
\draw[thick] (0,0) circle (1.1);
\draw[thick] (2.2,0) circle (1.1);
\draw[thick] (4.4,0) circle (1.1);
\draw[thick] (6.6,0) circle (1.1);
\draw[densely dotted,thick] (0,0) ellipse (1.1 and 0.25);
\draw[densely dotted,thick] (2.2,0) ellipse (1.1 and 0.25);
\draw[densely dotted,thick] (4.4,0) ellipse (1.1 and 0.25);
\draw[densely dotted,thick] (6.6,0) ellipse (1.1 and 0.25);
\node at (2.2,.6) {$S_{\bf b}$};
\node at (0,.6) {$S_{\bf a}$};
\node at (4.4,.6) {$S_{\bf c}$};
\node at (6.6,.6) {$S_{\bf d}$};
\end{tikzpicture}
\]
More generally, it is straightforward to check that the ${((m-1),(1))}$ topological exotic Springer fiber is chain of $m$ two-spheres.
\end{ex}

\subsection{Intersections of irreducible components}

Next, we give a diagrammatic description of the pairwise intersections of the submanifolds $S_\ba\subseteq\left(\mathbb S^2\right)^m$, $\ba\in \mathbb B^{((k),(m-k))}$. Our result (see Theorem~\ref{thm:intersection_of_components}) is formulated using so-called circle diagrams.  

\begin{defi}
Let $\ba,\bb\in\mathbb B^{((k),(m-k))}$. The {\it circle diagram} $\overline{\ba}\bb$ is defined as the diagram obtained by reflecting the diagram $\ba$ in its horizontal middle line and then sticking the resulting diagram, denoted by $\overline{\ba}$, on top of the cup diagram $\bb$, i.e., we glue the two diagrams along the horizontal line containing the vertices (thereby identifying the vertices of $\overline{\ba}$ and $\bb$ pairwise from left to right).
\end{defi}

\begin{rem}
In general, the diagram $\overline{\ba}\bb$ consists of several connected components each of which is either a circle (i.e., a closed connected component) or a line segment. Note that there are three types of line segments: lines containing two half-cups, lines containing two rays, and lines containing one ray and a half-cup. A line which contains a half-cup or ray of $\ba$ and a half-cup or ray of $\bb$ is called a {\it propagating line}, i.e., the endpoints of the line are on opposite sides with respect to the horizontal middle line of $\overline{\ba}\bb$.  
\end{rem}

\begin{ex}
Here is an example illustrating the gluing of two cup diagrams in order to obtain a circle diagram:
\[
\ba=
\begin{tikzpicture}[scale=.8,baseline={(0,-.45)}]
\draw[thick,rotate around={180:(2,0)}] (0.5,0) arc (0:90:4mm); 

\draw[dotted] (-0.3,-1) -- (3.9,-1) -- (3.9,0) -- (-0.3,0) -- cycle;

\draw[thick] (0,0) .. controls +(0,-0.5) and +(0,-0.5) .. +(0.5,0);
\draw[thick] (1,0) .. controls +(0,-0.5) and +(0,-0.5) .. +(0.5,0);
\draw[thick] (2.5,0) .. controls +(0,-0.5) and +(0,-0.5) .. +(0.5,0);
\draw[thick] (2,0) -- +(0,-1);

\foreach \x in {0,0.5,1,...,3.5}{
\fill (\x,0) circle (2pt);}

\end{tikzpicture}
\hspace{3em}
\bb=
\begin{tikzpicture}[scale=.8,baseline={(0,-0.45)}]
\draw[dotted] (-0.3,-1) -- (3.9,-1) -- (3.9,0) -- (-0.3,0) -- cycle;
\draw[thick,rotate around={180:(2,0)}] (0.5,0) arc (0:90:4mm); 
\draw[thick,rotate around={180:(2,0)}] (1,0) arc (0:90:9mm); 

\draw[thick] (0,0) .. controls +(0,-0.5) and +(0,-0.5) .. +(0.5,0);
\draw[thick] (1.5,0) .. controls +(0,-0.5) and +(0,-0.5) .. +(0.5,0);
\draw[thick] (2.5,0) -- +(0,-1);
\draw[thick] (1,0) -- +(0,-1);
\foreach \x in {0,0.5,1,...,3.5}{
\fill (\x,0) circle (2pt);}
\end{tikzpicture}
\]
\[
\begin{xy}
	\xymatrix@=1em{
\ar@{~>}[rrr]^{\textrm{reflect }\ba}_{\textrm{and glue}}  &&&
}
\end{xy}
\hspace{1.5em}
\overline{\ba}\bb=
\begin{tikzpicture}[scale=.8,baseline={(0,0)}]
\draw[dotted] (-.3,-1) -- (3.9,-1) -- (3.9,1) -- (-.3,1) -- cycle;
\draw[dotted] (-.3,0) -- (3.9,0);

\draw[thick] (0,0) .. controls +(0,.5) and +(0,.5) .. +(.5,0);
\draw[thick] (1,0) .. controls +(0,.5) and +(0,.5) .. +(.5,0);
\draw[thick] (2.5,0) .. controls +(0,.5) and +(0,.5) .. +(.5,0);
\draw[thick] (2,0) -- +(0,1);

\draw[thick,rotate around={180:(2,0)}] (.5,0) arc (0:-90:4mm);
\draw[thick,rotate around={180:(2,0)}] (.5,0) arc (0:90:4mm); 
\draw[thick,rotate around={180:(2,0)}] (1,0) arc (0:90:9mm); 

\draw[thick] (0,0) .. controls +(0,-.5) and +(0,-.5) .. +(.5,0);
\draw[thick] (1.5,0) .. controls +(0,-.5) and +(0,-.5) .. +(.5,0);
\draw[thick] (2.5,0) -- +(0,-1);
\draw[thick] (1,0) -- +(0,-1);
\end{tikzpicture}
\]
Hence, $\overline{\ba}\bb$ consists of one circle and three line segments. Note that the line containing to rays and the line containing two half-cups are both propagating. The line which contains a ray and a half-cup is an example of a non-propagating line.
\end{ex}

\begin{thm}\label{thm:intersection_of_components}
Let $\ba,\bb\in \mathbb B^{((k),(m-k))}$. We have $S_{\ba}\cap S_{\bb}\neq\emptyset$ if and only if all lines containing two rays in $\overline{\ba}\bb$ are propagating. Moreover, if $S_{\ba}\cap S_{\bb}\neq\emptyset$, we have a homeomorphism $S_{\ba}\cap S_{\bb}\cong\left(\mathbb S^2\right)^k$, where $k$ counts the number of circles plus the number of line segments in $\overline{\ba}\bb$ containing two half-cups.
\end{thm}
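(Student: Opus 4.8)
The plan is to observe that the relations cutting out $S_\ba$ and $S_\bb$ inside $(\mathbb S^2)^m$ are precisely the relations one reads off the two halves of the circle diagram $\overline{\ba}\bb$, so that $S_\ba\cap S_\bb$ is the locus in $(\mathbb S^2)^m$ defined by $x_i=-x_j$ for every cup on vertices $i,j$ occurring in $\overline{\ba}\bb$ and $x_i=p$ for every ray occurring in $\overline{\ba}\bb$ (half-cups imposing nothing). The first step is to note that every vertex lies on exactly one connected component of $\overline{\ba}\bb$ and that the relations attached to a component involve only the coordinates indexed by its vertices; hence $S_\ba\cap S_\bb$ splits as a product $\prod_C\Sigma_C$ over the connected components $C$ of $\overline{\ba}\bb$, where $\Sigma_C$ is the solution locus of the relations carried by $C$, sitting inside the product of the copies of $\mathbb S^2$ indexed by the vertices of $C$. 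Everything then reduces to identifying each $\Sigma_C$.

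Second I would run through the component types. Along any component the two edges at a given vertex are one from $\overline{\ba}$ and one from $\bb$, so the cups on a component alternate between the two halves. If $C$ is a circle it therefore has an even number of vertices; chaining the sign relations expresses every coordinate as $\pm x_v$ for a fixed vertex $v$ of $C$, the closing-up condition holds automatically by evenness, and projection to $x_v$ identifies $\Sigma_C\cong\mathbb S^2$. If $C$ is a line whose two boundary edges are half-cups, the same chaining expresses all coordinates in terms of the coordinate at one end, with no extra constraint, so again $\Sigma_C\cong\mathbb S^2$. If $C$ is a line containing one ray and one half-cup, the ray pins one coordinate to $p$, which propagates along $C$ to determine all of them, so $\Sigma_C$ is a single point. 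If $C$ is a line with two rays through vertices $v_1,\dots,v_r$ taken in order, both ends are pinned to $p$ and the cups force $x_{v_t}=(-1)^{t-1}p$; thus $\Sigma_C$ is a single point if $r$ is odd and is empty if $r$ is even.

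Third I would translate ``$r$ odd'' into ``propagating''. The key observation is that in $\overline{\ba}\bb$ each vertex has exactly one incident edge in the upper half (its $\overline{\ba}$-edge --- cup, ray, or half-cup) and exactly one in the lower half (its $\bb$-edge), so a line of $\overline{\ba}\bb$, regarded as an arc, crosses the horizontal middle line once at each of its $r$ vertices and lies alternately above and below it. Bookkeeping which side the arc is on shows that its two terminal boundary edges land on opposite sides of the middle line precisely when $r$ is odd; that is, $C$ is propagating iff $r$ is odd. In particular a two-ray line is propagating iff $r$ is odd iff $\Sigma_C\neq\emptyset$.

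Putting this together, $S_\ba\cap S_\bb=\prod_C\Sigma_C$ is nonempty iff every $\Sigma_C$ is nonempty iff every two-ray line of $\overline{\ba}\bb$ is propagating, which is the first assertion. When it is nonempty, the factors $\Sigma_C$ are homeomorphic to $\mathbb S^2$ exactly when $C$ is a circle or a line with two half-cups, and to a point otherwise, whence $S_\ba\cap S_\bb\cong(\mathbb S^2)^k$ with $k$ the number of circles plus the number of two-half-cup lines in $\overline{\ba}\bb$; the continuous bijections used are homeomorphisms since all the spaces involved are compact Hausdorff. The only genuinely delicate point I anticipate is the parity-versus-propagating dictionary of the third step --- in particular making the ``side of the middle line'' bookkeeping rigorous, where one must remember that half-cups run to the right edge of the rectangle and so must be correctly assigned to the upper half when they come from $\overline{\ba}$ and to the lower half when they come from $\bb$; the rest is a routine chase of sign relations along paths.
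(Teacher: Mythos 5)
Your proof is correct and follows essentially the same strategy as the paper: decompose $\overline{\ba}\bb$ into connected components, analyse each component type to see whether the cup/ray relations are consistent, and read off a product of spheres. The one place you go beyond the paper is the third step: the paper simply asserts the equivalence between ``non-propagating two-ray line'' and ``odd number of cups between the two rays,'' whereas you give an actual proof via the observation that a line crosses the middle horizontal once at each of its $r$ vertices (so its two terminal edges land on the same side iff $r$ is even, equivalently the number of cups $r-1$ is odd); this is a worthwhile addition, and the rest of your argument (product decomposition $\prod_C\Sigma_C$, the compact-Hausdorff remark for upgrading the bijection to a homeomorphism) matches the paper's implicit reasoning.
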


\begin{proof}
Assume that all lines in $\overline{\ba}\bb$ containing two rays are propagating. Given a circle or a line containing two half-cups in $\overline{\ba}\bb$, let $i$ denote the leftmost vertex (i.e., $i$ is the smallest element) on this component. Choose an element $x_i\in\left(\mathbb S^2\right)^m$. This determines all other $x_j$ for $j$ in the component by applying the relation $x_k=-x_l$ for the cups on the circle. Note that a circle always consists of an even number of cups. Hence, going around the circle once therefore yields an even number of sign changes and the thus the relations are consistent. For line segments containing two half-cups there is nothing to show. Given a line, we pick a vertex $i$ connected to a ray and set $x_i=p$. The remaining coordinates in the component are then determined. Note that for a given line the coordinate relations are consistent because the line is propagating and thus there is an even number of cups between the two rays. For a line segment containing a half-cup the relations are always consistent. Projecting onto the leftmost endpoints of circles and line segments containing two half-cups yields the homeomorphism $S_{\ba}\cap S_{\bb}\cong\left(\mathbb S^2\right)^k$.

On the other hand, if there is a non-propagating line containing two rays in $\overline{\ba}\bb$, there is an odd number of cups connecting the two rays of the line. Hence the relations coming from the cups are not consistent with the ones coming from the rays. Hence, the intersection $S_{\ba}\cap S_{\bb}$ must be empty.       
\end{proof}

\begin{rem}\label{rem:generalization}
We would like to make two remarks.
\begin{enumerate}
\item By using Theorem~\ref{thm:main_result_1}, we see that Theorem~\ref{thm:intersection_of_components} provides a description of the topology of the pairwise intersections of the irreducible components $K_\ba$ of the exotic Springer fiber $\mathcal Fl^{((k),(m-k))}$.
\item The argument in the proof of Theorem~\ref{thm:intersection_of_components} can be applied to $S_{\ba}$ and $S_{\bb}$ even if $\ba\in \mathbb B^{((k),(m-k))}$ and $\bb\in\mathbb B^{((l),(m-l))}$ for $k\neq l$. That is, it can be used to study the intersections of two irreducible components contained in two different exotic Springer fibers. This will be important in Section~\ref{sec:torus_fixed_points} (in particular, the proof of Theorem~\ref{thm:intersection_of_attracting_cells}). 
\end{enumerate}
\end{rem}

\section{$\mathbb C^*$-action and Bia\l{}ynicki-Birula paving}\label{sec:torus_fixed_points}

There is a $\mathbb C^*$-action on $\mathbb C^{2N}$ given by $t.e_i=te_i$ and $t.f_i=t^{-1}f_i$ which yields a $\mathbb C^*$-action on the variety $Y_m$ defined in~(\ref{eq:Y_i}). This action restricts to the embedded exotic Springer fiber $\mathcal Fl^{((k),(m-k))}\subseteq Y_m$ (recall Proposition~\ref{prop:exotic_embedding} for this embedding). In this section, we study the isolated fixed points under this $\mathbb C^*$-action and explicitly describe the corresponding attracting cells and the resulting Bia\l{}ynicki-Birula decomposition of the exotic Springer fiber $\mathcal Fl^{((k),(m-k))}$. 

%

\subsection{$\mathbb C^*$-fixed points} In order to describe the isolated fixed points of the $\mathbb C^*$-action on the exotic Springer fiber we introduce the notion of a combinatorial weight.

\begin{defi}
A {\it combinatorial weight of type $((k),(m-k))$} is a $m$-tuple $\alpha=(\alpha_1,\ldots,\alpha_m)$ of symbols $\alpha_i\in\{\land,\lor\}$ with at least $k$ $\land$'s. Let $W^{((k),(m-k))}$ denote the set of all combinatorial weights of type $((k),(m-k))$.
\end{defi}

\begin{ex}\label{ex:comb_weights}
Here is a list of all combinatorial weights of type $((3),(1))$:
\[
\alpha=\land\land\land\land\,,\,\,\beta=\lor\land\land\land\,,\,\,\gamma=\land\lor\land\land\,,\,\,\delta=\land\land\lor\land\,,\,\,\epsilon=\land\land\land\lor.
\] 
Note that we omit the parentheses when writing down a combinatorial weight and we do not separate the symbols by commas. 
\end{ex} 

Given $\alpha\in W^{((k),(m-k))}$ and $l\in\{1,\ldots,m\}$, we define $F_l^\alpha=\spn(e_1,\ldots,e_i,f_1,\ldots,f_j)$, where $i$ is the number of $\land$'s amongst the first $l$ entries $\alpha_1,\ldots,\alpha_l$ of $\alpha$ and $j$ is the number of $\lor$'s amongst them. This yields a flag $(F^\alpha_1,\ldots,F^\alpha_m)\in Y_m$ associated to $\alpha\in W^{((k),(m-k))}$. Note that we have $e_k\in F_m^\alpha$ because by assumption there are at least $k$ $\land$'s in $\alpha$. In particular, $(F^\alpha_1,\ldots,F^\alpha_m)\in\mathcal Fl^{((k),(m-k)}$.

\begin{prop}\label{prop:fixed_points}
The map $\alpha\mapsto (F^\alpha_1,\ldots,F^\alpha_m)$ defines a bijection between $W^{((k),(m-k))}$ and the fixed points under the $\mathbb C^*$-action in $\mathcal Fl^{((k),(m-k))}$.
\end{prop}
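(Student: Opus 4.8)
The plan is to analyze the $\mathbb{C}^*$-action on $Y_m$ directly, using the fact that $\mathbb{C}^*$ acts on $\mathbb{C}^{2N}$ diagonally with weights $+1$ on the $e_i$ and $-1$ on the $f_i$. First I would observe that a flag $(F_1,\ldots,F_m)\in Y_m$ is fixed by the whole torus $\mathbb{C}^*$ if and only if every $F_l$ is stable under the action of $t$, i.e.\ each $F_l$ is spanned by weight vectors; since the weight spaces are exactly $\mathrm{span}(e_1,\ldots,e_N)$ and $\mathrm{span}(f_1,\ldots,f_N)$, this means each $F_l$ decomposes as $F_l = (F_l\cap \mathrm{span}(e_\bullet))\oplus(F_l\cap \mathrm{span}(f_\bullet))$. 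Combined with the condition $zF_i\subseteq F_{i-1}$ and the fact (Remark~\ref{y_remark}) that all $F_l$ lie in $E_m=\mathrm{span}(e_1,\ldots,e_m,f_1,\ldots,f_m)$, the chain condition $zF_i\subseteq F_{i-1}$ forces $F_l\cap\mathrm{span}(e_\bullet)$ to be an initial segment $\mathrm{span}(e_1,\ldots,e_i)$ and $F_l\cap\mathrm{span}(f_\bullet)$ to be an initial segment $\mathrm{span}(f_1,\ldots,f_j)$ with $i+j=l$. Thus the torus-fixed points of $Y_m$ are exactly the flags of the form $(F_1^\alpha,\ldots,F_m^\alpha)$ where at each step we adjoin either the next $e$ or the next $f$; such a choice is precisely recorded by an $m$-tuple of symbols in $\{\land,\lor\}$ (a $\land$ in position $l$ meaning we adjoined an $e$, a $\lor$ meaning we adjoined an $f$). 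This recovers the flags $F_l^\alpha$ from the definition preceding the proposition.

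Next I would impose the exotic Springer fiber condition. By Proposition~\ref{prop:exotic_embedding}, $\mathcal{Fl}^{((k),(m-k))}$ sits inside $Y_m$ as the locus where $e_k\in F_m$. For a torus-fixed flag $(F_1^\alpha,\ldots,F_m^\alpha)$, we have $F_m^\alpha=\mathrm{span}(e_1,\ldots,e_{i},f_1,\ldots,f_{j})$ where $i$ is the total number of $\land$'s in $\alpha$; hence $e_k\in F_m^\alpha$ holds if and only if $i\geq k$, i.e.\ $\alpha$ has at least $k$ symbols equal to $\land$. This is exactly the defining condition for $\alpha\in W^{((k),(m-k))}$, so the map $\alpha\mapsto(F_1^\alpha,\ldots,F_m^\alpha)$ lands in the fixed-point set of $\mathcal{Fl}^{((k),(m-k))}$ and accounts for all of it.

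It remains to check three things: that the assignment is well-defined into the fixed locus (done above, including $e_k\in F_m^\alpha$, which is already noted in the text), that it is injective, and that it is surjective. Injectivity is immediate since $\alpha$ can be read off from the flag: $\alpha_l=\land$ precisely when $\dim(F_l^\alpha\cap\mathrm{span}(e_\bullet))>\dim(F_{l-1}^\alpha\cap\mathrm{span}(e_\bullet))$. Surjectivity follows from the classification of torus-fixed flags in $Y_m$ in the first paragraph together with the observation that any fixed point of $\mathcal{Fl}^{((k),(m-k))}$ is in particular a fixed point of $Y_m$. Finally I would note that these fixed points are isolated: the fixed locus is finite, being in bijection with the finite set $W^{((k),(m-k))}$. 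The main (and only mildly delicate) obstacle is the first paragraph — carefully arguing that torus-stability of each $F_l$ plus the nilpotent chain condition $zF_i\subseteq F_{i-1}$ forces the $e$- and $f$-parts to be initial segments; once that normal form is established everything else is bookkeeping. I would also remark that one could alternatively deduce the statement by transporting the well-known description of $(\mathbb{C}^*)$-fixed points of $Y_m$ (e.g.\ via the diffeomorphism $\phi_m$ of Proposition~\ref{homeo} and the induced torus action on $(\mathbb{P}^1)^m$) and intersecting with the fixed locus cut out by $e_k\in F_m$, but the direct linear-algebra argument above is cleanest.
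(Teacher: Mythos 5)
Your proof is correct and follows essentially the same approach as the paper: the fixed flags in $Y_m$ are determined inductively (each $F_l$ must be $\mathbb{C}^*$-stable, hence spanned by $e$'s and $f$'s, and the nilpotent chain condition forces initial segments), and the condition $e_k\in F_m$ from Proposition~\ref{prop:exotic_embedding} then cuts this down exactly to $W^{((k),(m-k))}$. Your version spells out the inductive step in more detail than the paper and adds a correct alternative via the diffeomorphism $\phi_m$, but the argument is the same.
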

\begin{proof}
The flag $(F^\alpha_1,\ldots,F^\alpha_m)\in Y_m$ is clearly a $\mathbb C^*$-fixed point for every $\alpha\in W^{((k),(m-k))}$ because each of its vector spaces is spanned by weight vectors with respect to the $\mathbb C^*$-action. Thus, we have a well-defined map which is injective by construction.

In order to see the surjectivity let $(F_1,\ldots,F_m)\in Y_m$ be a flag fixed by the $\mathbb C^*$-action. By induction on $l$ one can see that $F_l$ must be spanned by $e_1,\ldots,e_r,f_1,\ldots,f_s$ for some $r,s$ in order to be fixed by the $\mathbb C^*$-action. In particular, in doing so, one can simultaneously construct the preimage $\alpha$ of $(F_1,\ldots,F_m)$. Since $e_k\in F_m$, there must be at least $k$ $\land$'s in $\alpha$ and it follows that $\alpha\in W^{((k),(m-k))}$. 
\end{proof}    

\subsection{Attracting cells and their closures} In order to describe the attracting cell corresponding to a given fixed point, we first explain how to assign a (one-boundary) cup diagram to a combinatorial weight. Given a combinatorial weight $\alpha\in W^{((k),(m-k))}$, the associated cup diagram $C(\alpha)$ is obtained by successively connecting neighboring pairs of symbols $\lor\land$ by a cup (ignoring the symbols which are already paired). If there is no more $\lor$ to the left of a $\land$ we attach a ray to all remaining $\land$'s and a half-cup to all remaining $\lor$'s. 

\begin{ex} \label{ex:cup_diagrams_for_attracting_cells}
The cup diagrams associated with the combinatorial weights from Example~\ref{ex:comb_weights} are given by
\[
\begin{array}{ccccc}
C(\alpha) = 
\begin{tikzpicture}[scale=0.8,xscale=0.8,yscale=1,baseline={(0,0.5)}]
\draw[dotted] (0.5,0) -- (0.5,1) -- (4.5,1) -- (4.5,0) -- cycle;
\draw[thick] (1,1) -- +(0,-1);
\draw[thick] (2,1) -- +(0,-1);
\draw[thick] (3,1) -- +(0,-1);
\draw[thick] (4,1) -- +(0,-1);

\foreach \x in {1,2,3,4}{
\fill (\x,1) circle (2pt);}
\end{tikzpicture}

&,&
\hspace{.3em}C(\beta) = 

\begin{tikzpicture}[scale=0.8,xscale=0.8,yscale=1,baseline={(0,0.5)}]
\draw[dotted] (0.5,0) -- (0.5,1) -- (4.5,1) -- (4.5,0) -- cycle;
\draw[thick] (1,1) .. controls +(0,-.5) and +(0,-.5) .. (2,1);
\draw[thick] (3,1) -- +(0,-1);
\draw[thick] (4,1) -- +(0,-1);

\foreach \x in {1,2,3,4}{
\fill (\x,1) circle (2pt);}

\end{tikzpicture}

&,&
\hspace{.3em}C(\gamma) = 

\begin{tikzpicture}[scale=0.8,xscale=0.7,yscale=1,baseline={(0,0.5)}]
\draw[dotted] (0.5,0) -- (0.5,1) -- (4.5,1) -- (4.5,0) -- cycle;
\draw[thick] (1,1) -- +(0,-1); 
\draw[thick] (2,1) .. controls +(0,-.5) and +(0,-.5) .. (3,1);
\draw[thick] (4,1) -- +(0,-1);
\foreach \x in {1,2,3,4}{
\fill (\x,1) circle (2pt);}

\end{tikzpicture}

\end{array}
\]
\[
\begin{array}{cccc}
C(\delta) = 
\begin{tikzpicture}[scale=0.8,xscale=0.8,yscale=1,baseline={(0,0.5)}]
\draw[dotted] (0.5,0) -- (0.5,1) -- (4.5,1) -- (4.5,0) -- cycle;
\draw[thick] (1,1) -- +(0,-1);
\draw[thick] (2,1) -- +(0,-1); 
\draw[thick] (3,1) .. controls +(0,-.5) and +(0,-.5) .. (4,1);
\foreach \x in {1,2,3,4}{
\fill (\x,1) circle (2pt);}

\end{tikzpicture}
&,&
\hspace{.3em}C(\epsilon) = 
\begin{tikzpicture}[scale=0.8,xscale=0.8,yscale=1,baseline={(0,0.5)}]
\draw[dotted] (0.5,0) -- (0.5,1) -- (4.5,1) -- (4.5,0) -- cycle;
\draw[thick] (1,1) -- +(0,-1);
\draw[thick] (2,1) -- +(0,-1);
\draw[thick] (3,1) -- +(0,-1);
\draw[thick]  (4,1) to[out=270,in=180] (4.5,0.5); 

\foreach \x in {1,2,3,4}{
\fill (\x,1) circle (2pt);}

\end{tikzpicture}

\end{array}
\]
\end{ex}


Given a combinatorial weight $\alpha\in W^{((k),(m-k))}$, let 
\[
K_\alpha :=\big\{(F_1,\ldots,F_m)\in\mathcal Fl^{((k),(m-k))}\mid \lim_{t\to\infty}t\cdot (F_1,\ldots,F_m) = (F_1^\alpha,\ldots,F_m^\alpha)\big\}
\]
denote the attracting cell of the fixed point corresponding to $\alpha$ via Lemma~\ref{prop:fixed_points}.

Let $P=\spn(e_1,\ldots,e_N)$ and $Q=\spn(f_1,\ldots,f_N)$. Given a flag $(F_1,\ldots,F_m)\in\mathcal Fl^{((k),(m-k))}$, we associate with it a new flag $(F^\textrm{ass}_1,\ldots,F_m^\textrm{ass})\in\mathcal Fl^{((k),(m-k))}$ by setting $F_i^\textrm{ass}=P_i+Q_i\subseteq P\oplus Q=V$, where $P_i=F_i\cap P$ and $Q_i=\left(F_i+P\right)/P$ (we identify $V/P\cong Q$). Note that this flag is fixed under the $\mathbb C^*$-action which means we have $(F^\textrm{ass}_1,\ldots,F_m^\textrm{ass})=(F_1^\alpha,\ldots,F_m^\alpha)$ for some combinatorial weight $\alpha\in W^{((k),(m-k))}$. The next lemma explains how the flag $(F^\textrm{ass}_1,\ldots,F_m^\textrm{ass})$ associated with $(F_1,\ldots,F_m)$ can be used to check if $(F_1,\ldots,F_m)$ is contained in the attracting cell for $\alpha$.

\begin{lem}
A flag $(F_1,\ldots,F_m)$ in $\mathcal Fl^{((k),(m-k))}$ is contained in the attracting cell $K_\alpha$ corresponding to $\alpha\in W^{((k),(m-k))}$ if and only if $(F^\textrm{ass}_1,\ldots,F_m^\textrm{ass})=(F_1^\alpha,\ldots,F_m^\alpha)$.
\end{lem}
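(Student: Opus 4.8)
The plan is to analyze the $\mathbb{C}^*$-action on each vector space $F_i$ of a flag $(F_1,\ldots,F_m)\in\mathcal Fl^{((k),(m-k))}$ by decomposing $F_i$ with respect to the weight grading $V=P\oplus Q$ coming from the $\mathbb{C}^*$-action (positive weights on $P$, negative weights on $Q$). For a one-dimensional subspace $\spn(v)$ with $v=v_P+v_Q$, $v_P\in P$, $v_Q\in Q$, the limit $\lim_{t\to\infty}t\cdot\spn(v)$ equals $\spn(v_P)$ if $v_P\neq 0$ and equals $\spn(v_Q)$ otherwise; more generally, for a subspace $F$ the limit $\lim_{t\to\infty}t\cdot F$ exists (since $Y_m$ is projective and $\mathbb{C}^*$-stable) and can be computed dimension by dimension using a filtration of $F$ adapted to the grading. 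I would make precise the standard fact that this limit, computed on all of $F_i$, gives exactly the associated graded flag: $\lim_{t\to\infty}t\cdot F_i = P_i\oplus Q_i'$ where $P_i=F_i\cap P$ and $Q_i'$ is the image of $F_i$ in $Q=V/P$, which is precisely $F_i^{\mathrm{ass}}$ as defined. This is the conceptual heart of the argument.

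First I would record that the $\mathbb{C}^*$-action on $Y_m$ is well-defined and that $\mathcal Fl^{((k),(m-k))}$ is $\mathbb{C}^*$-stable (already observed at the start of the section via the embedding in Proposition~\ref{prop:exotic_embedding}, since $e_k$ is a $\mathbb{C}^*$-weight vector fixed up to scalar). Hence for every flag $(F_1,\ldots,F_m)\in\mathcal Fl^{((k),(m-k))}$ the limit $\lim_{t\to\infty}t\cdot(F_1,\ldots,F_m)$ exists and is a $\mathbb{C}^*$-fixed point, so by Proposition~\ref{prop:fixed_points} it equals $(F_1^\beta,\ldots,F_m^\beta)$ for a unique $\beta\in W^{((k),(m-k))}$; the flag lies in $K_\alpha$ by definition precisely when $\beta=\alpha$. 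So the lemma reduces to the identity $\lim_{t\to\infty}t\cdot(F_1,\ldots,F_m)=(F_1^{\mathrm{ass}},\ldots,F_m^{\mathrm{ass}})$.

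To prove that identity I would argue one vector space at a time. Fix $i$ and choose a basis of $F_i$ which is ``echelon'' with respect to the decreasing filtration $F_i\supseteq F_i\cap Q\supseteq 0$ — concretely, pick vectors $w_1,\ldots,w_r$ whose images span $Q_i=(F_i+P)/P$ and lift them so that each $w_a=e\text{-part}+f\text{-part}$ with nonzero $P$-component, together with a basis $u_1,\ldots,u_s$ of $F_i\cap P$, where $r+s=i$. Then $t\cdot w_a$, after rescaling by $t^{-1}$ times its top $P$-weight, converges; taking these limits together with the $u_b$ (which are already $\mathbb{C}^*$-weight vectors) produces a basis of $\lim_{t\to\infty}t\cdot F_i$, and the span of these limits is exactly $P_i+Q_i = F_i^{\mathrm{ass}}$ by construction of $F_i^{\mathrm{ass}}$. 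Doing this compatibly across all $i$ (one can choose the echelon bases to be nested since the $F_i$ are) shows the limiting flag is the associated flag. Since $(F_i^{\mathrm{ass}})$ is $\mathbb{C}^*$-fixed it equals $(F_i^\alpha)$ for a unique $\alpha$, and comparing with the previous paragraph finishes the proof.

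The main obstacle I anticipate is the bookkeeping in the linear-algebra limit: one must be careful that the naive coordinatewise limits of a basis of $F_i$ can be \emph{linearly dependent} (the leading $P$-terms of different basis vectors may collide), so one genuinely needs an echelon/Gaussian-elimination normalization of the basis before passing to the limit, and one must check the resulting limits are independent and span the expected $r+s$-dimensional space $P_i+Q_i$. A clean way to sidestep most of this is to invoke the general principle that for a $\mathbb{C}^*$-action on a Grassmannian with a single attracting/fixing decomposition $V=P\oplus Q$, the limit of any point is its associated graded subspace; but since the paper works very concretely, I would spell out the echelon argument rather than cite it, and this is where the only real care is needed — everything else is formal.
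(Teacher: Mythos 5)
The conceptual core of your proposal — that the $\mathbb C^*$-limit of a flag in $Y_m$ is the associated graded flag, and that this reduces the lemma to a linear-algebra statement proved one subspace at a time — is the right idea, and is indeed what underlies the result \cite[Proposition 14]{SW12} to which the paper defers. However, as written your argument is internally inconsistent about the direction of the limit, and the inconsistency is fatal.

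In your opening paragraph you correctly compute that for $v=v_P+v_Q$ with $v_P\neq 0$, $\lim_{t\to\infty}t\cdot\spn(v)=\spn(v_P)$: the $t\to\infty$ limit collapses a line onto its \emph{highest}-weight component. You then assert that $\lim_{t\to\infty}t\cdot F_i = P_i+Q_i'$ with $P_i=F_i\cap P$ and $Q_i'$ the image of $F_i$ in $V/P\cong Q$. These two statements contradict each other. Take $V=\mathbb C^2=\spn(e)\oplus\spn(f)$ (weights $+1,-1$) and $F=\spn(e+f)$: your first claim gives $\lim_{t\to\infty}t\cdot F=\spn(e)$, while $F\cap P=0$ and the image of $F$ in $V/P$ is $\spn(\bar f)$, so your second claim gives $\spn(f)$. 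In fact the $t\to\infty$ limit of a subspace $F$ is $(F\cap Q)+(F+Q)/Q$ (identified inside $P$); the space $(F\cap P)+(F+P)/P=F^{\mathrm{ass}}$ is the $t\to 0$ limit. The echelon paragraph compounds this: you name the filtration $F_i\supseteq F_i\cap Q\supseteq 0$ but then construct a basis adapted to the other filtration (a basis of $F_i\cap P$ plus lifts of a basis of $(F_i+P)/P$), and the requirement that the lifts $w_a$ have ``nonzero $P$-component'' is backwards — they automatically have nonzero $Q$-component, and it is precisely the $Q$-component that must survive in the limit for your conclusion to come out.

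The sign confusion originates in the paper: the displayed definition of $K_\alpha$ uses $\lim_{t\to\infty}$, but $F^{\textrm{ass}}$ as defined is the $t\to 0$ limit, and the explicit cells listed after Theorem~\ref{thm:bialynicki-birula} (e.g.\ $K_\beta$ with the sole constraint $F_1\neq\spn(e_1)$, so $F_1=\spn(e_1+f_1)$ is allowed yet flows to $\spn(e_1)\neq F_1^\beta$ as $t\to\infty$) are consistent only with the $t\to 0$ reading. Your proof would go through after two corrections: replace $t\to\infty$ by $t\to 0$ throughout (so the line limit selects the lowest-weight component $\spn(v_Q)$ when $v_Q\neq 0$, and $\spn(v_P)$ otherwise), and rewrite the echelon step using the filtration $F_i\supseteq F_i\cap P\supseteq 0$, taking a basis of $F_i\cap P$ together with arbitrary lifts $w_a$ of a basis of $(F_i+P)/P$, whose $Q$-components survive the $t\to 0$ limit and span $Q_i$. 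With those repairs the linear-independence issue you flag at the end is genuine but routine, and the resulting argument is a reasonable self-contained alternative to the citation of \cite{SW12}.
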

\begin{proof}
This follows exactly as in the proof of~\cite[Proposition 14]{SW12}.
\end{proof}

\begin{thm}\label{thm:bialynicki-birula}
The attracting cell $K_\alpha$ associated with the $\mathbb C^*$-fixed point $(F^\alpha_1,\ldots,F^\alpha_m)\in Y_m$ consists of all flags $(F_1,\ldots,F_m)\in\mathcal F^{((k),(m-k))}$ satisfying the following conditions imposed by the cup diagram $C(\alpha)$:
\[
\mathrm{(i)}\,\, F_j=z^{-\frac{1}{2}(j-i+1)}F_{i-1} \hspace{.8em} \text{ if } i \CupConnect j \hspace{1.9em} \mathrm{(ii)}\,\, F_i=F_{i-1}+\spn\left(e_{\frac{1}{2}(i+\rho_{C(\alpha)}(i))}\right) \hspace{.8em} \text{ if }\RayConnect 
\]
where $\rho_{C(\alpha)}(i)$ is as in Theorem~\ref{thm:main_result_1}. Additionally, we have the condition 
\[
\mathrm{(iii)}\,\,\, F_i\cap \spn(e_1,\ldots,e_N) = F_{i-1}\cap \spn(e_1,\ldots,e_N) 
\]
whenever vertex $i$ is a left endpoint of a cup or a half-cup.
\end{thm}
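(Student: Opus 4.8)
The plan is to reduce the description of the attracting cell $K_\alpha$ to the already-established description of the irreducible component $K_{C(\alpha)}$ from Theorem~\ref{thm:algebraic_main_result}\ref{second_part_thm}, and then to pin down the extra condition (iii) using the ``associated flag'' construction and the preceding Lemma. Concretely, I would first observe that conditions (i) and (ii) are exactly the defining relations of $K_{C(\alpha)}$, so the content of the theorem is the identification $K_\alpha = K_{C(\alpha)} \cap \{(F_1,\ldots,F_m) : \text{(iii) holds}\}$. The natural strategy, following the proof of~\cite[Proposition 15]{SW12} closely (the analogous type A statement), is to induct on the number of cups in $C(\alpha)$ using the fibration maps $q_m^i$ and $\psi_m$ from Section~\ref{sec:structure_irred_comp}, exactly as in the proofs of Proposition~\ref{prop:K_a_contained} and Proposition~\ref{prop:preimage_contained}.

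For the base case one takes $\alpha$ with $C(\alpha)$ consisting only of rays and half-cups, i.e.\ $\alpha = \land\cdots\land\lor\cdots\lor$ (the combinatorial weights whose cup diagram has no cups are precisely those with all $\land$'s preceding all $\lor$'s). Here conditions (i), (ii) force $F_k = \spn(e_1,\ldots,e_k)$, and one checks directly that a flag $(F_1,\ldots,F_m)$ in $\mathcal{F}l^{((k),(m-k))}$ with $F_k$ so fixed flows under $t\to\infty$ to $(F_1^\alpha,\ldots,F_m^\alpha)$ if and only if for each vertex $i$ carrying a half-cup one has $F_i \cap P = F_{i-1}\cap P$ — this is condition (iii). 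The point is that along a half-cup strand the $\mathbb{C}^*$-limit records whether the new direction added at step $i$ lies in $P = \spn(e_1,\ldots,e_N)$ or not, and the limit equals $F_i^\alpha$ (which has an $e$ in that slot) precisely when it does not, i.e.\ when $\dim(F_i\cap P) = \dim(F_{i-1}\cap P)$. I would phrase this cleanly via the associated flag: by the Lemma preceding the theorem, $(F_1,\ldots,F_m)\in K_\alpha$ iff $F_i^{\mathrm{ass}} = F_i^\alpha$ for all $i$, and since $F_i^{\mathrm{ass}} = (F_i\cap P) \oplus (F_i + P)/P$, comparing dimensions of the $P$-parts step by step against $\alpha$ gives exactly (i), (ii), (iii).

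For the inductive step, given a cup in $C(\alpha)$ connecting neighbouring vertices $i$ and $i+1$, let $\alpha'$ be the weight whose cup diagram is obtained by deleting that cup (so $\alpha'$ is $\alpha$ with the corresponding $\lor\land$ pair removed; it lies in $W^{((k-1),(m-k-1))}$). One uses that the map $q_m^i\colon X_m^i \twoheadrightarrow Y_{m-2}$ is $\mathbb{C}^*$-equivariant and carries the fixed point $(F_\bullet^\alpha)$ to $(F_\bullet^{\alpha'})$, hence restricts to a fibration $K_\alpha \to K_{\alpha'}$; combined with the inductive hypothesis for $\alpha'$ and Lemma~\ref{lem:induction_alg_components} (which already gives $K_{C(\alpha)} = (q_m^i)^{-1}(K_{C(\alpha')})$), one concludes that $K_\alpha = (q_m^i)^{-1}(K_{\alpha'})$ is cut out inside $K_{C(\alpha)}$ by the pullback of condition (iii) for $\alpha'$. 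One then checks that pulling back condition (iii) along $q_m^i$ reintroduces condition (iii) for the half-cup-carrying vertices unaffected by the deleted cup, and that the newly cupped vertex $i$ automatically satisfies (iii) for $K_\alpha$ — since the cup relation $F_{i+1} = z^{-1}F_{i-1}$ forces $F_{i}\cap P$ to jump at step $i$ exactly when $F_{i-1}\cap P$ did not already contain the relevant $e$-direction, which matches the behaviour of $(F_\bullet^\alpha)$ at a $\lor\land$ pair. (When the deleted cup is not the innermost one incident to a half-cup, one also needs the routine bookkeeping that $q_m^i$ shifts the ``$P$-condition'' correctly; this is the same shift already verified for condition (ii) in Lemma~\ref{lem:induction_alg_components}.)

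I expect the main obstacle to be the careful handling of condition (iii) along half-cup strands under the $q_m^i$ pullback — in particular, verifying that condition (iii) for a half-cup vertex $i$ in $\alpha$ corresponds precisely to condition (iii) for the same vertex in $\alpha'$ after relabelling, and that no spurious condition is created or lost at the vertices flanking the deleted cup. This is exactly the step where the analogy with type A is most delicate, since in type A the analogous ``boundary'' phenomenon is absent; here the half-cups are the genuinely new feature. All the $\mathbb{C}^*$-equivariance of $q_m^i$, $\psi_m$, and the identification of limits with associated flags are, by contrast, formal consequences of the constructions in Sections~\ref{sec:structure_irred_comp} and~\ref{sec:torus_fixed_points} together with the cited arguments from~\cite{SW12} and~\cite{Weh09}.
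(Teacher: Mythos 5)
Your overall strategy — induction on the number of cups in $C(\alpha)$, using the maps $q_m^i$ and $\psi_m$ together with the associated-flag characterization of the attracting cell from the preceding Lemma — is the same as the paper's, and your base-case analysis (comparing $F_i\cap P$ against the $\lor$-entries of $\alpha$ via associated flags) is sound. The paper merely organizes that step differently, as a sub-induction stripping off the rightmost half-cup via $\psi_m$ one at a time.

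The inductive step, however, contains a genuine error. You assert that $K_\alpha = (q_m^i)^{-1}(K_{\alpha'})$ and that condition (iii) at the newly cupped vertex $i$ is ``automatically satisfied.'' Neither holds. The preimage $(q_m^i)^{-1}(K_{\alpha'})$ places no constraint on $F_i$ beyond $F_{i-1}\subseteq F_i\subseteq F_{i+1}=z^{-1}F_{i-1}$; in particular it contains the entire $\mathbb P^1$-fiber of choices of $F_i$, including the single degenerate point where $F_i\cap P\supsetneq F_{i-1}\cap P$. That flag does not limit to $(F^\alpha_1,\ldots,F^\alpha_m)$ — its associated flag has an $\land$ in position $i$ rather than the required $\lor$ — so it is not in $K_\alpha$. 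A minimal counterexample: for $m=2$, $k=0$, $\alpha=\lor\land$, one has $(q_2^1)^{-1}(K_{\alpha'})=K_{C(\alpha)}\cong\mathbb P^1$ while $K_\alpha\cong\mathbb C$ (the complement of the point $F_1=\spn(e_1)$). The paper's proof instead concludes with $K_\alpha=\{(F_1,\ldots,F_m)\in(q_m^i)^{-1}(K^0_{\alpha'})\mid F_i\cap P=F_{i-1}\cap P\}$: condition (iii) at vertex $i$ must be imposed as an additional open condition, and it is precisely this condition that cuts the open attracting cell out of the closed irreducible component $K_{C(\alpha)}$. The forward containment (flags in $K_\alpha$ satisfy (iii) at $i$) does follow from the associated-flag lemma as you note, but (iii) is a genuine extra constraint, not a consequence of the pulled-back conditions; this is exactly the distinction between $K_\alpha$ and its closure $\overline{K_\alpha}=K_{C(\alpha)}$ recorded in Corollary~\ref{cor:filtration}.
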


\begin{proof}
We prove the claim by induction on the number of cups in $C(\alpha)$. First assume that $\alpha$ is of the form $\land\land\ldots\land$ such that $C(\alpha)$ consists of rays only. Then $(F^\alpha_1,\ldots,F^\alpha_m)$ is given by $F_i(\alpha)=\spn(e_1,\ldots,e_i)$. For any other flag $(F_1,\ldots,F_m)$ attracted to $(F^\alpha_1,\ldots,F^\alpha_m)$ we have $P_i=F_i\cap P=\spn(e_1,\ldots,e_i)=F_i(\alpha)$ and thus $Q_i\cong 0$. In particular, it follows that any such $(F_1,\ldots,F_m)$ must already be equal to $(F^\alpha_1,\ldots,F^\alpha_m)$.

Next assume that $\alpha$ is of the form $\land\land\ldots\land\lor\ldots\lor$ such that $C(\alpha)$ consists of a sequence of rays followed by a sequence of half-cups (assume there exists at least one $\lor$). Let $\alpha'$ be the sequence obtained by deleting the rightmost $\lor$ in $\alpha$. By induction we have that $K_{\alpha'}=K^0_{\alpha'}$, where $K^0_{\alpha'}$ is defined as the set of flags satisfying the conditions (i) and (ii) of the theorem. Since $K_\alpha\subseteq\psi_m^{-1}(K_{\alpha'})$ we obtain
\[
K_\alpha\subseteq\psi_m^{-1}(K_{\alpha'})=\psi_m^{-1}(K^0_{\alpha'}),
\]
where $\psi_m^{-1}(K^0_{\alpha'})$ consists of all flags such that $F_1,\ldots,F_{m-1}$ satisfy the relations (i) and (ii) of $C(\alpha)$, and $F_m$ can be chosen freely. We even have $$K_\alpha\subseteq \{(F_1,\ldots,F_m)\in\psi_m^{-1}(K^0_{\alpha'})\mid F_i\cap P = F_{i-1}\cap P\}$$ because if $\alpha_i=\lor$, then we have $F_i\cap P=F_{i-1}\cap P$. 
On the other hand, if $P_m=F_m\cap P=F_{m-1}\cap P$ we see that $F_m^\textrm{ass}$ is obtained from $F_{m-1}^\textrm{ass}$ by adding a vector from $Q$. Thus, the inclusion is in fact an equality. 

Assume that there exists a cup in $C(\alpha)$ and assume that $i$ and $i+1$ are connected by a cup. In particular, $\alpha$ has symbols $\lor\land$ at positions $i$ and $i+1$. Let $\alpha'$ be the combinatorial weight obtained by removing these two symbols. By induction we have $K_{\alpha'}$ satisfies the conditions of the theorem and since $K_\alpha\subseteq\left(\psi_m^i\right)^{-1}(K_{\alpha'})$ (by the same argument as in~\cite{SW12}) we obtain $K_\alpha\subseteq\left(\psi_m^i\right)^{-1}(K_{\alpha'})=\left(\psi_m^i\right)^{-1}(K^0_{\alpha'})$. By arguing as in the case of a half-cup, we see that $K_\alpha=\{(F_1,\ldots,F_m)\in\left(\psi_m^i\right)^{-1}(K^0_{\alpha'})\mid F_i\cap P=F_{i-1}\cap P\}$.
\end{proof}

\begin{rem}
Instead of working with flags, we could also give a topological description of the attracting cells. Let $S_\alpha\subseteq (\mathbb S^2)^m$ denote the preimage of the attracting cell $K_\alpha$ under the homeomorphism from Theorem~\ref{thm:main_result_1}. Then $S_\alpha$ consists of all $(x_1,\ldots,x_m)\in(\mathbb S^2)^m$ which satisfy the conditions $x_i=-x_j$ and $x_i\neq p$ if $i<j$ are connected by a cup in $C(\alpha)$, $x_i\neq p$ if $i$ is connected to a half-cup, and $x_i=p$ if $i$ is connected to a ray in $C(\alpha)$. 
\end{rem}

\begin{ex}
The attracting cells corresponding to the diagrams in Example~\ref{ex:cup_diagrams_for_attracting_cells} are given by $K_\alpha=\big\{\left(\spn(e_1),\spn(e_1,e_2),\spn(e_1,e_2,e_3),\spn(e_1,e_2,e_3,e_4)\right)\big\}$ and
\[
K_\beta=\big\{\left(F_1,\spn(e_1,f_1),\spn(e_1,e_2,f_1),\spn(e_1,e_2,e_3,f_1)\right)\mid F_1\neq\spn(e_1)\big\},
\]
\[
K_\gamma=\big\{\left(\spn(e_1),F_2,\spn(e_1,e_2,f_1),\spn(e_1,e_2,e_3,f_1)\right)\mid F_2\neq\spn(e_1,e_2)\big\},
\]
\[
K_\delta=\big\{\left(\spn(e_1),\spn(e_1,e_2),F_3,\spn(e_1,e_2,e_3,f_1)\right)\mid F_3\neq\spn(e_1,e_2,e_3)\big\},
\]
\[
K_\epsilon=\big\{\left(\spn(e_1),\spn(e_1,e_2),\spn(e_1,e_2,e_3),F_4\right)\mid F_4\neq\spn(e_1,e_2,e_3,e_4)\big\}.
\]

Topologically, we have $S_\alpha=\big\{(p,p,p,p)\big\}$ and
\medskip

\begin{tabular}{ll}
$S_\beta=\{(x,-x,p,p)\mid x\in\mathbb S^2\setminus\{p\}\}$, & $S_\gamma=\{(p,x,-x,p)\mid x\in\mathbb S^2\setminus\{p\}\},$ \\
$S_\delta=\{(p,p,x,-x)\mid x\in\mathbb S^2\setminus\{p\}\}$, & $S_\epsilon=\{(p,p,p,x)\mid x\in\mathbb S^2\setminus\{p\}\}$.
\end{tabular}


\end{ex}
We would like to note two useful consequences.

\begin{cor}\label{cor:Bett_numbers}
The $2i$-th Betti number $b_i$, $0\leq i\leq m-k$, of the exotic Springer fiber $\mathcal Fl^{((k),(m-k))}$ is $b_{2i}=\binom{m}{i}$.
\end{cor}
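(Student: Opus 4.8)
The plan is to read the Betti numbers off the Bia\l{}ynicki-Birula decomposition provided by Theorem~\ref{thm:bialynicki-birula}. I would invoke the standard fact that if a compact space admits an affine paving with cells of complex dimensions $d_1,d_2,\ldots$, then its odd cohomology vanishes and its even Betti numbers are $b_{2i}=\#\{\,j:d_j=i\,\}$. The decomposition $\mathcal Fl^{((k),(m-k))}=\bigsqcup_{\alpha\in W^{((k),(m-k))}}K_\alpha$ of Theorem~\ref{thm:bialynicki-birula} is such a paving: it is the restriction to the $\mathbb C^*$-stable closed subvariety $\mathcal Fl^{((k),(m-k))}\subseteq Y_m$ of the Bia\l{}ynicki-Birula decomposition of the smooth projective variety $Y_m$, so any total order on the fixed points that makes the decomposition of $Y_m$ filtrable makes the one of $\mathcal Fl^{((k),(m-k))}$ filtrable as well, and each cell $K_\alpha$ is an affine space by the computation below. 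Granting this, the corollary reduces to two tasks: computing $\dim_{\mathbb C}K_\alpha$ for each combinatorial weight $\alpha\in W^{((k),(m-k))}$, and then a combinatorial count.

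For the dimension I would use the topological description of the attracting cells recorded in the Remark following Theorem~\ref{thm:bialynicki-birula}. Under the homeomorphism of Theorem~\ref{thm:main_result_1}, the cell $K_\alpha$ corresponds to the subset $S_\alpha\subseteq(\mathbb S^2)^m$ of all $(x_1,\ldots,x_m)$ with $x_i=-x_j$ and $x_i\neq p$ whenever $i\CupConnect j$ in $C(\alpha)$, with $x_i\neq p$ whenever vertex $i$ carries a half-cup of $C(\alpha)$, and with $x_i=p$ whenever $\RayConnect$ in $C(\alpha)$. Choosing the coordinate indexed by the left endpoint of each cup and by each half-cup freely in $\mathbb S^2\setminus\{p\}\cong\mathbb C$ and observing that every remaining coordinate is then forced, one obtains $K_\alpha\cong S_\alpha\cong\mathbb C^{c(\alpha)}$, where $c(\alpha)$ denotes the number of cups plus half-cups of $C(\alpha)$. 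By the construction of $C(\alpha)$, the symbols $\lor$ in $\alpha$ are precisely the left endpoints of cups together with the vertices carrying half-cups (and the symbols $\land$ are the right endpoints of cups together with the vertices carrying rays), so $c(\alpha)$ equals the number of $\lor$'s occurring in $\alpha$.

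The final step is the count: a combinatorial weight of type $((k),(m-k))$ with exactly $i$ symbols $\lor$ has $m-i$ symbols $\land$, and the requirement of at least $k$ symbols $\land$ is equivalent to $i\leq m-k$; when $i\leq m-k$ there are exactly $\binom{m}{i}$ such weights, one for each choice of the positions of the $i$ symbols $\lor$. Combining this with $b_{2i}=\#\{\alpha\in W^{((k),(m-k))}:c(\alpha)=i\}$ yields $b_{2i}=\binom{m}{i}$ for $0\leq i\leq m-k$ (and $b_{2i}=0$ for $i>m-k$, and all odd Betti numbers vanish). The one point that genuinely needs care is the justification that the Bia\l{}ynicki-Birula decomposition is a bona fide affine paving even though $\mathcal Fl^{((k),(m-k))}$ is in general singular; I would handle this via the inheritance argument from the smooth projective $Y_m$ sketched in the first paragraph. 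As a sanity check (or an alternative proof once the cohomology ring is available), the same Poincaré polynomial also follows directly from the presentation of $H^*(\mathcal Fl^{((k),(m-k))},\mathbb C)$ in Theorem~\ref{introthm:cohomology} by counting the square-free monomials $X_I$ with $|I|\leq m-k$.
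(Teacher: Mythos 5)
Your argument is correct and takes essentially the same route as the paper: the paper's entire proof of Corollary~\ref{cor:Bett_numbers} is the single sentence ``This is a consequence of the paving constructed in Theorem~\ref{thm:bialynicki-birula},'' and you have simply spelled out the details (the inheritance of the filtration from the smooth ambient $Y_m$, the identification $\dim_{\mathbb C}K_\alpha=\#\{\text{cups}+\text{half-cups in }C(\alpha)\}=\#\{\lor\text{'s in }\alpha\}$, and the count $\binom{m}{i}$) that the paper leaves implicit.
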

\begin{proof}
This is a consequence of the paving constructed in Theorem~\ref{thm:bialynicki-birula}.
\end{proof}

\begin{cor}\label{cor:filtration}
The closure $\overline{K_\alpha}$ of the attracting cell $K_\alpha\subseteq Y_m$ is equal to $K_{C(\alpha)}\subseteq Y_m$. In particular, $\overline{K_\alpha}$ is homeomorphic to $S_{C(\alpha)}$. 
\end{cor}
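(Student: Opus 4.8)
The plan is to identify $\overline{K_\alpha}$ with $K_{C(\alpha)}$ by combining the explicit description of the attracting cell $K_\alpha$ from Theorem~\ref{thm:bialynicki-birula} with the description of the irreducible component $K_{C(\alpha)}$ from Theorem~\ref{thm:algebraic_main_result}\ref{second_part_thm}. The key observation is that conditions (i) and (ii) defining $K_\alpha$ are \emph{exactly} the closed conditions (i), (ii) defining $K_{C(\alpha)}$, while the extra condition (iii) (that $F_i\cap P=F_{i-1}\cap P$ whenever $i$ is a left endpoint of a cup or half-cup) is an \emph{open} condition inside the variety cut out by (i), (ii). Thus $K_\alpha$ is an open subvariety of $K_{C(\alpha)}$. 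Since $K_{C(\alpha)}$ is irreducible (it is an $(m-k)$-fold iterated $\mathbb P^1$-bundle by Theorem~\ref{thm:algebraic_main_result}\ref{third_part_thm}) and $K_\alpha$ is a nonempty open subset, the closure of $K_\alpha$ in $K_{C(\alpha)}$ — hence in $Y_m$, since $K_{C(\alpha)}$ is closed in $Y_m$ — is all of $K_{C(\alpha)}$. This gives $\overline{K_\alpha}=K_{C(\alpha)}$.

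To make the irreducibility and density precise I would argue along the iterated fiber bundle structure from the proof of Theorem~\ref{thm:algebraic_main_result}. At each stage $j$, the bundle $p_j\colon X_j\to\mathbb P^1$ sending a flag to $F_{i_j}/F_{i_j-1}\in\mathbb P(z^{-1}F_{i_j-1}/F_{i_j-1})$ has the property that condition (iii) at vertex $i_j$ excludes exactly one point of the $\mathbb P^1$ base (the point corresponding to $F_{i_j}$ with $F_{i_j}\cap P$ jumping), leaving an $\mathbb A^1$. Inductively, $K_\alpha$ is an iterated $\mathbb A^1$-bundle over a point, i.e. an affine cell of dimension $m-k$, sitting inside $K_{C(\alpha)}$ as the complement of the ``boundary divisors'' coming from the distinguished points in each $\mathbb P^1$ factor; this complement is open and dense. (This is also consistent with the cell having dimension $m-k=\dim K_{C(\alpha)}$, which already forces $\overline{K_\alpha}$ to be an irreducible component, hence some $K_\bb$, and matching the defining relations pins down $\bb=C(\alpha)$.) The final sentence then follows immediately: by Theorem~\ref{thm:main_result_1} the homeomorphism $\mathcal S^{((k),(m-k))}\xrightarrow{\cong}\mathcal Fl^{((k),(m-k))}$ carries $S_{C(\alpha)}$ onto $K_{C(\alpha)}=\overline{K_\alpha}$, so $\overline{K_\alpha}$ is homeomorphic to $S_{C(\alpha)}$.

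The main obstacle I anticipate is the bookkeeping needed to verify that condition (iii) is genuinely an open condition on the space cut out by (i) and (ii), and that it excludes precisely one point in each $\mathbb P^1$ fiber of the iterated bundle rather than something more complicated — in particular one must check that the distinguished ``bad'' point $F_{i_j}$ in the base $\mathbb P(z^{-1}F_{i_j-1}/F_{i_j-1})$ is well defined independently of the choices made at earlier stages, which uses that the spaces $F_{i_j-1}$ are already determined (as noted in the proof of Theorem~\ref{thm:algebraic_main_result}). Everything else is a formal consequence of irreducibility of $K_{C(\alpha)}$ together with the containment $K_\alpha\subseteq K_{C(\alpha)}$ and a dimension count; indeed one can bypass the fiber-bundle analysis entirely by noting that $K_\alpha$ and $K_{C(\alpha)}$ have the same dimension $m-k$ and that $K_\alpha\subseteq\mathcal Fl^{((k),(m-k))}$ is locally closed and irreducible, so $\overline{K_\alpha}$ is an irreducible component of the exotic Springer fiber, necessarily of the form $K_\bb$, and the shared relations (i), (ii) force $\bb=C(\alpha)$.
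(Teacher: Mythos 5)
Your proof is correct and takes essentially the same approach as the paper: compare the relations in Theorem~\ref{thm:bialynicki-birula} and Theorem~\ref{thm:algebraic_main_result} to exhibit $K_\alpha$ as a dense open subvariety of the irreducible $K_{C(\alpha)}$, then take closures and invoke Theorem~\ref{thm:main_result_1} for the homeomorphism with $S_{C(\alpha)}$. The paper's proof is deliberately terse (``obtained by comparing the relations'') and leaves the openness/density step implicit; you have simply spelled it out.
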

\begin{proof}
The first claim is obtained by comparing the relations in Theorem~\ref{thm:bialynicki-birula} and Theorem~\ref{thm:algebraic_main_result}. Using this, the homeomorphism $\overline{K_\alpha}\cong S_{C(\alpha)}$ is then a consequence of Theorem~\ref{thm:main_result_1}. 
\end{proof}

\subsection{Intersections of closures of attracting cells}

The final goal of this section will be to give a combinatorial description of the pairwise intersections of the closures of attracting cells using oriented circle diagrams. We then describe a basis of the cohomology of these pairwise intersections in terms of oriented circle diagrams. This lays the foundation for constructing exotic versions of (generalized) arc algebras,~\cite{Kho02},~\cite{Str09},~\cite{BS11},~\cite{CK14}, by defining a convolution product on the direct sum of cohomologies of the pairwise intersections of the closures of the attracting cells (see~\cite{SW12} for the construction in type A).  

\begin{defi}
Let $\alpha,\beta\in W^{((k),(m-k))}$ and let $\alpha C(\beta)$ be the diagram obtained by sticking the weight $\alpha$ on top of the cup diagram $C(\beta)$. We say that $\alpha C(\beta)$ is an {\it oriented cup diagram} if $\alpha_i\neq \alpha_j$ for $i\CupConnect j$ and $\alpha_i=\land$ if $\RayConnect$. Furthermore, we allow any symbol $\alpha_i\in\{\land,\lor\}$ if $i$ is connected to a half-cup.    
\end{defi}

\begin{defi}
Given $\alpha,\beta\in W^{((k),(m-k))}$, an {\it oriented circle diagram} $\overline{C(\alpha)}\gamma C(\beta)$ is obtained by sticking a weight $\gamma\in W^{((k),(m-k))}$ between the two diagrams $\overline{C(\alpha)}$ and $C(\beta)$ such that both $\gamma C(\alpha)$ and $\gamma C(\beta)$ are both oriented cup diagrams. This induces orientations of the connected components, i.e., the circle and lines, of the circle diagram. We say that a component is oriented clockwise (resp.\ counterclockwise) if the leftmost vertex on such a component is a $\land$ (resp.\ $\lor$).
\end{defi}

The following theorem describes the pairwise intersections of the closures of the attracting cells in terms of oriented circle diagrams.
\begin{thm}\label{thm:intersection_of_attracting_cells}
Let $\alpha,\beta\in W^{((k),(m-k))}$. We have $\overline{K_\alpha}\cap\overline{K_\beta}\neq \emptyset$ if and only if there exists some $\gamma\in W^{((k),(m-k))}$ such that $\overline{C(\alpha)}\gamma C(\beta)$ is an oriented circle diagram.
\end{thm}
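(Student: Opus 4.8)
The plan is to reduce the statement to the topological intersection criterion already established in Theorem~\ref{thm:intersection_of_components}, and then to carry out a purely combinatorial translation.

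First I would invoke Corollary~\ref{cor:filtration}: since $\overline{K_\alpha}=K_{C(\alpha)}$ and $\overline{K_\beta}=K_{C(\beta)}$, and both of these varieties are carried homeomorphically onto $S_{C(\alpha)}$, respectively $S_{C(\beta)}$, by one and the same ambient diffeomorphism $\phi_m^{-1}\circ\gamma_m^{-1}$ of Theorem~\ref{thm:main_result_1}, we get $\overline{K_\alpha}\cap\overline{K_\beta}\neq\emptyset$ if and only if $S_{C(\alpha)}\cap S_{C(\beta)}\neq\emptyset$. (One should note here that $C(\alpha)$ and $C(\beta)$ need not lie in $\mathbb B^{((k),(m-k))}$, but the proofs of Lemma~\ref{lem:sphere_vs_projective_space} and Proposition~\ref{prop:preimage_contained} proceed by induction on the number of cups and remain valid verbatim for arbitrary one-boundary cup diagrams on $m$ vertices.) Now Theorem~\ref{thm:intersection_of_components}, applied in the slightly more general form permitted by Remark~\ref{rem:generalization}(2), says that $S_{C(\alpha)}\cap S_{C(\beta)}\neq\emptyset$ precisely when every line of the circle diagram $\overline{C(\alpha)}C(\beta)$ containing two rays is propagating. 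Thus the theorem is reduced to the combinatorial equivalence
\[
\text{every two-ray line of }\overline{C(\alpha)}C(\beta)\text{ is propagating}\iff\exists\,\gamma\in W^{((k),(m-k))}\text{ orienting }\overline{C(\alpha)}C(\beta).
\]

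Next I would analyze $\overline{C(\alpha)}C(\beta)$ component by component. Along any line the internal arcs alternate between cups of $C(\alpha)$ in the upper half and cups of $C(\beta)$ in the lower half, and tracking on which side of the middle line one sits after each arc — exactly the bookkeeping used in the proof of Theorem~\ref{thm:intersection_of_components} — shows that a line is propagating if and only if it carries an even number of cups. A weight $\gamma$ orients $\overline{C(\alpha)}C(\beta)$ if and only if on every component it assigns opposite symbols to the two endpoints of each cup and the symbol $\land$ to each ray endpoint, half-cup endpoints being unconstrained; equivalently, both $\gamma C(\alpha)$ and $\gamma C(\beta)$ are oriented cup diagrams. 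A circle consists of an even number of cups (they alternate between those of $C(\alpha)$ and $C(\beta)$), hence it always admits two such labellings. A ray is necessarily an endpoint of its line, so no line has three or more rays; a line with at most one ray admits a labelling, since the ray, if present, pins down one endpoint and alternation across the cups determines the rest. Finally, a line with exactly two rays admits a labelling if and only if the two $\land$'s forced at its endpoints are compatible with the alternation along its cups, i.e.\ if and only if an even number of cups separates the two rays, i.e.\ if and only if the line is propagating. Hence some $\gamma$ orients $\overline{C(\alpha)}C(\beta)$ exactly when every two-ray line is propagating.

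It remains to check that any $\gamma$ orienting $\overline{C(\alpha)}C(\beta)$ automatically lies in $W^{((k),(m-k))}$. Orienting $C(\alpha)$ forces $\gamma$ to put exactly one $\land$ on (one endpoint of) each cup of $C(\alpha)$ and a $\land$ on each ray of $C(\alpha)$; these vertices are pairwise distinct, so $\gamma$ has at least as many $\land$'s as the number of cups plus rays of $C(\alpha)$. That number equals the number of $\land$'s in $\alpha$, which is at least $k$ since $\alpha\in W^{((k),(m-k))}$; hence $\gamma\in W^{((k),(m-k))}$, and the argument is complete. I expect the only mildly delicate point to be the parity bookkeeping along the lines of $\overline{C(\alpha)}C(\beta)$, which is however essentially identical to the computation already carried out for Theorem~\ref{thm:intersection_of_components}; the remaining verifications — that the constructions of Lemma~\ref{lem:sphere_vs_projective_space} and Proposition~\ref{prop:preimage_contained} survive for diagrams outside $\mathbb B^{((k),(m-k))}$, and that the $\land$-count estimate places $\gamma$ in $W^{((k),(m-k))}$ — are routine.
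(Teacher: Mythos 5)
Your proposal is correct and follows the same route as the paper: reduce to the topological criterion of Theorem~\ref{thm:intersection_of_components} via Corollary~\ref{cor:filtration} and Remark~\ref{rem:generalization}, then verify the combinatorial equivalence between the propagating condition and the existence of an orientation. The paper simply declares that equivalence ``straightforward to check''; your parity analysis of the lines (and the observation that any orienting $\gamma$ automatically has at least $c_\alpha + r_\alpha \geq k$ symbols $\land$, hence lies in $W^{((k),(m-k))}$) supplies exactly the details the paper omits.
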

\begin{proof}
It is straightforward to check that there exists a $\gamma\in W^{((k),(m-k))}$ such that $\overline{C(\alpha)}\gamma C(\beta)$ is an oriented circle diagram if and only if all lines in $\overline{C(\alpha)}C(\beta)$ containing two rays are propagating. Since $\overline{K_\alpha}\cong S_{C(\alpha)}$ and $\overline{K_\beta}\cong S_{C(\beta)}$ by Corollary~\ref{cor:filtration}, the theorem now follows directly from Theorem~\ref{thm:intersection_of_components} and Remark~\ref{rem:generalization}.
\end{proof}

Let $\alpha,\beta\in W^{((k),(m-k))}$ be such that $\overline{K_\alpha}\cap\overline{K_\beta}\neq\emptyset$. Then $\overline{K_\alpha}\cap\overline{K_\beta}\cong S_{C(\alpha)}\cap S_{C(\beta)}\cong(\mathbb S^2)^k$ by Corollary~\ref{cor:filtration} and Theorem~\ref{thm:intersection_of_components}, where $k$ counts the number of circles and line segments containing two half-cups. In particular, we can find $2^k$ different $\gamma\in W^{((k),(m-k))}$ such that $\overline{C(\alpha)}\gamma C(\beta)$ is an oriented circle diagram because any circle or line segment containing two half-cups can be oriented either clockwise or counterclockwise. Since 
\[
H^*(\overline{K_\alpha}\cap\overline{K_\beta},\mathbb C)\cong H^*((\mathbb S^2)^k,\mathbb C)\cong \mathbb C[X]/(X^2)^{\otimes k},
\] 
we obtain an isomorphism of vector spaces
\[
H^*(\overline{K_\alpha}\cap\overline{K_\beta},\mathbb C)\cong\mathbb C[\overline{C(\alpha)}\gamma C(\beta)\mid \overline{C(\alpha)}\gamma C(\beta)\text{ is oriented}]
\]
sending an elementary tensor $a_1\otimes\ldots\otimes a_k$, $a_i\in\{1,X\}\subseteq\mathbb C[X]/(X^2)$, to the oriented circle diagram in which the components corresponding to spheres with $a_i=X$ are oriented clockwise.    

\section{The cohomology ring and an action of the Weyl group}\label{sec:cohomology_rings}


In this section we provide an explicit presentation of the cohomology ring of the exotic Springer fiber $\mathcal Fl^{((k),(m-k))}$. The main idea is to construct a topological space $\mathrm{Sk}_{2(m-k)}^m$ homotopy equivalent to $\mathcal Fl^{((k),(m-k))}$ whose cohomology ring will be straightforward to calculate. We then show that the type C Weyl group naturally acts on $\mathrm{Sk}_{2(m-k)}^m$ and induces Kato's exotic Springer representation in cohomology. Throughout this section we will write $H_*(X)$ (resp.\ $H^*(X)$) as a shorthand notation for $H_*(X,\mathbb Z)$ (resp.\ $H^*(X,\mathbb Z)$).

\subsection{Diagrammatic homology bases}

Our first goal is to construct diagrammatic homology bases of the submanifolds $S_\ba\subseteq\left(\mathbb S^2\right)^m$, $\ba\in\mathbb B^{((k),(m-k))}$, in the spirit of~\cite{RT11},~\cite{SW16}. We also provide a diagrammatic homology basis of $\left(\mathbb S^2\right)^m$. This allows us to explicitly describe the map induced by the natural inclusion $S_\ba\hookrightarrow \left(\mathbb S^2\right)^m$ in cohomology (see Lemma~\ref{lem:image_of_homology_gen}). We will use this result to compute the cohomology ring of the (topological) exotic Springer fiber $\mathcal S^{((k),(m-k))}\cong \mathcal Fl^{((k),(m-k))}$ in the next subsection.  

\begin{defi}
A {\it line diagram} is constructed by attaching $m$ vertical lines, each possibly decorated with a single dot, to $m$ vertices (numbered from left to right) on a horizontal line. Given $U\subseteq \{1,\ldots,m\}$, let $l_U$ denote the unique line diagram with dots precisely on the lines whose endpoints are not contained in $U$. We denote the set of all line diagrams on $m$ vertices by $\mathfrak{L}_m$. 
\end{defi}

\begin{ex}\label{ex:line_diagrams}
The line diagrams in $\mathfrak{L}_2$ are given as follows: 
\[
l_\emptyset=
\begin{tikzpicture}[baseline={(0,-.5)},scale=.8]
\draw[thick] (0,0) -- +(0,-1);
\draw[thick] (.5,0) -- +(0,-1);
\draw (0,-.5) circle(3pt);
\draw (.5,-.5) circle(3pt);
\end{tikzpicture}\hspace{1em},
\hspace{3em}
l_{\{1\}}=
\begin{tikzpicture}[baseline={(0,-.5)},scale=.8]
\draw[thick] (0,0) -- +(0,-1);
\draw[thick] (.5,0) -- +(0,-1);
\draw (.5,-.5) circle(3pt);
\end{tikzpicture}\hspace{1em},
\hspace{3em}
l_{\{2\}}=
\begin{tikzpicture}[baseline={(0,-.5)},scale=.8]
\draw[thick] (0,0) -- +(0,-1);
\draw[thick] (.5,0) -- +(0,-1);
\draw (0,-.5) circle(3pt);
\end{tikzpicture}\hspace{1em},
\hspace{3em}
l_{\{1,2\}}=
\begin{tikzpicture}[baseline={(0,-.5)},scale=.8]
\draw[thick] (0,0) -- +(0,-1);
\draw[thick] (.5,0) -- +(0,-1);
\end{tikzpicture}\,\,.
\]
\end{ex}

The two-sphere has a cell decomposition $\mathbb S^2=\{p\}\cup\left(\mathbb S^2\setminus\{p\}\right)$ consisting of a $0$-cell and a $2$-cell. Henceforth, we fix this CW-structure and equip $(\mathbb S^2)^m$ with the Cartesian product CW-structure. Note that we obtain a bijection between $\mathfrak{L}_m$ and the cells of the CW-complex $(\mathbb S^2)^m$ by mapping a line diagram $l_U$ to the cell $C_{l_U}$, where $C_{l_U}$ is defined by choosing the $0$-cell (resp.\ $2$-cell) for the $i$th sphere in $(\mathbb S^2)^m$ if the $i$th line of $l_U$ is dotted (resp.\ undotted). Since the homology classes $[C_{l_U}]$ of the cells form a basis of $H_*((\mathbb S^2)^m)$ as a $\mathbb Z$-module, we obtain an isomorphism of $\mathbb Z$-modules
\begin{equation} \label{eq:homology_identification_for_spheres}
\mathbb Z[\mathfrak{L}_m] \xrightarrow\cong H_*\left((\mathbb S^2)^m\right);\,\,\,l_U\mapsto [C_{l_U}],
\end{equation}
where $\mathbb Z[\mathfrak{L}_m]$ is the free $\mathbb Z$-module with basis $\mathfrak{L}_m$. The homological degree of a line diagram is given by twice the number of undotted lines.

\begin{ex}\label{ex:cells_spheres}
The cells of $\mathbb S^2\times\mathbb S^2$ corresponding to the line diagrams in $\mathfrak{L}_2$ (see Example~\ref{ex:line_diagrams}) are given by 

\smallskip
\begin{minipage}{.45\linewidth}
\begin{itemize}
\item $C_{l_\emptyset}=\{(p,p)\}$,
\item $C_{l_{\{1\}}}=\{(x,p)\mid x\in\mathbb S^2\setminus\{p\}\}$,
\end{itemize}
\end{minipage}
\begin{minipage}{.5\linewidth}
\begin{itemize}
\item $C_{l_{\{2\}}}=\{(p,x)\mid x\in\mathbb S^2\setminus\{p\}\}$,
\item $C_{l_{\{1,2\}}}=\{(x,y)\mid x,y\in\mathbb S^2\setminus\{p\}\}$.
\end{itemize}
\end{minipage}

\smallskip
\noindent The homology class $[C_{l_\emptyset}]$ (resp.\ $[C_{l_{\{1,2\}}}]$) is a basis of $H_0(\mathbb S^2\times\mathbb S^2)$ (resp.\ $H_4(\mathbb S^2\times\mathbb S^2)$) and $[C_{l_{\{1\}}}]$, $[C_{l_{\{2\}}}]$ form a basis of $H_2(\mathbb S^2\times\mathbb S^2)$.
\end{ex}

\begin{defi}
An {\it enriched cup diagram} is a cup diagram (as in Definition~\ref{defi:one-boundary_cup_diagrams}) in which the cups and half-cups are allowed to be decorated with a single dot each. Additionally, every ray is decorated with a single dot. Let $\widetilde{\mathbb B}^{((k),(m-k))}$ be the set of all enriched cup diagrams on $m$ vertices such that the total number of cups plus half-cups is $m-k$.
\end{defi}

\begin{ex} \label{ex:enriched_cup_diagrams}
Here are two examples of enriched cup diagrams in $\widetilde{\mathbb B}^{((4),(5))}$:

\begin{multicols}{2}
\begin{tikzpicture}[scale=0.6,xscale=1,baseline={(0,-.45)}]
\draw[dotted] (0.5,0) -- (9.5,0) -- (9.5,-1.5) -- (0.5,-1.5) -- cycle;

\foreach \x in {1,2,...,9}
\fill (\x,0) circle (3pt);

\draw (4.5,-0.45) circle (3pt);
\draw (3,-0.75) circle (3pt);

\draw[thick] (1,0) .. controls +(0,-0.6) and +(0,-0.6) .. +(1,0);
\draw[thick] (4,0) .. controls +(0,-0.6) and +(0,-0.6) .. +(1,0);
\draw[thick] (7,0) .. controls +(0,-0.6) and +(0,-0.6) .. +(1,0);

\draw[thick] (3,0) -- +(0,-1.5);
\draw[thick] (6,0) to [out=270,in=180](9.5,-1);
\draw[thick] (9,0) to [out=270,in=180](9.5,-0.5);
\end{tikzpicture}

\columnbreak

\begin{tikzpicture}[scale=0.6,xscale=1,baseline={(0,-.45)}]
\draw[dotted] (0.5,0) -- (9.5,0) -- (9.5,-1.5) -- (0.5,-1.5) -- cycle;

\foreach \x in {1,2,...,9}
\fill (\x,0) circle (3pt);

\draw (1.5,-0.45) circle (3pt);
\draw (3,-0.75) circle (3pt);
\draw (9.2,-0.43) circle (3pt);

\draw[thick] (1,0) .. controls +(0,-0.6) and +(0,-0.6) .. +(1,0);
\draw[thick] (4,0) .. controls +(0,-0.6) and +(0,-0.6) .. +(1,0);
\draw[thick] (7,0) .. controls +(0,-0.6) and +(0,-0.6) .. +(1,0);

\draw[thick] (3,0) -- +(0,-1.5);
\draw[thick] (6,0) to [out=270,in=180](9.5,-1);
\draw[thick] (9,0) to [out=270,in=180](9.5,-0.5);

\end{tikzpicture}
\end{multicols}
\end{ex}

Given $\ba\in\mathbb B^{((k),(m-k))}$, let $i_1<i_2<\cdots<i_{m-k}$ denote the left endpoints of the cups and half-cups in $\ba$. Note that we have a homeomorphism
\begin{equation}\label{eq:homeo_giving_cell_decomp}
\xi_\ba\colon S_\ba \xrightarrow\cong (\mathbb S^2)^{m-k};\,\,(x_1,\ldots,x_m) \mapsto (x_{i_1},\ldots,x_{i_{m-k}}),
\end{equation}
and we equip $S_\ba$ with the structure of the CW-complex whose cells are defined as the preimages of the cells of $(\mathbb S^2)^{m-k}$  under the homeomorphism (\ref{eq:homeo_giving_cell_decomp}). We obtain a bijection between $\widetilde{\mathbb B}^{((k),(m-k))}$ and the cells of $S_\ba$ by mapping an enriched cup diagram $M$ to the cell $C_M$, where a dotted cup (resp.\ undotted cup) means that we have chosen the $0$-cell (resp.\ $2$-cell) for the corresponding sphere via (\ref{eq:homeo_giving_cell_decomp}). Since the homology classes of the the cells of $S_\ba$ form a basis of the homology $H_*(S_\ba)$, we obtain an isomorphism of $\mathbb Z$-modules
\begin{equation} \label{eq:homology_identification_for_S_ba}
\mathbb Z[\widetilde{\mathbb B}^{((k),(m-k))}] \xrightarrow\cong H_*(S_\ba);\,\, M\mapsto [C_M]. 
\end{equation}
An enriched cup diagram for which the number of undotted cups plus undotted half-cups is equal to $l$ has homological degree $2l$.


\begin{defi}
Given an enriched cup diagram $M\in\widetilde{\mathbb B}^{((k),(m-k))}$, we define the associated {\it line diagram sum} as 
\[ 
L_M=\sum_{U\in\mathcal U_M}(-1)^{\Lambda_M(U)}l_U \in \mathbb{Z}[\mathfrak{L}_m],
\]
where $\mathcal U_M$ is the set of all subsets $U\subseteq\{1,\ldots,m\}$ containing precisely one endpoint of every undotted cup as well as the endpoints of every undotted half-cup in $M$. Moreover, $\Lambda_M(U)$ counts the total number of right endpoints of cups in $U$.
\end{defi}

\begin{ex}
Consider the enriched cup diagram
\[
M=
\begin{tikzpicture}[scale=0.6,xscale=1,baseline={(0,-.45)}]
\draw[dotted] (0.5,0) -- (9.5,0) -- (9.5,-1.5) -- (0.5,-1.5) -- cycle;

\foreach \x in {1,2,...,9}
\node[above] at (\x,0) {\tiny{$\x$}};
\foreach \x in {1,2,...,9}
\fill (\x,0) circle (3pt);

\draw (1.5,-0.45) circle (3pt);
\draw (3,-0.75) circle (3pt);
\draw (9.2,-0.43) circle (3pt);

\draw[thick] (1,0) .. controls +(0,-0.6) and +(0,-0.6) .. +(1,0);
\draw[thick] (4,0) .. controls +(0,-0.6) and +(0,-0.6) .. +(1,0);
\draw[thick] (7,0) .. controls +(0,-0.6) and +(0,-0.6) .. +(1,0);

\draw[thick] (3,0) -- +(0,-1.5);
\draw[thick] (6,0) to [out=270,in=180](9.5,-1);
\draw[thick] (9,0) to [out=270,in=180](9.5,-0.5);

\end{tikzpicture}
\in \widetilde{\mathbb{B}}^{((4),(5))}.
\]
We have
\[
\mathcal{U}_M=\{\{4,6,7\}, \{4,6,8\}, \{5,6,7\}, \{5,6,8\} \}
\]
with 
\[
\Lambda_{M}(\{4,6,7\})=0, \Lambda_{M}(\{4,6,8\})=\Lambda_{M}(\{5,6,7\})=1 \, \,  \text{and} \, \,  \Lambda_{M}(\{5,6,8\})=2.
\]
Therefore, the associated line diagram sum is 
\[
L_M=l_{\{4,6,7\}} - l_{\{4,6,8\}} -  l_{\{5,6,7\}} +  l_{\{5,6,8\}}\in\mathbb{Z}[\mathfrak{L}_9].
\]
\end{ex}

\begin{lem} \label{lem:image_of_homology_gen}
The map $(\psi_\ba)_*\colon H_*(S_\ba)\to H_*((\mathbb S^2)^m)$ induced by the natural inclusion $$\psi_\ba\colon S_\ba\hookrightarrow (\mathbb S^2)^m$$ is explicitly given by the assignment $M\mapsto L_M$ (via identifications (\ref{eq:homology_identification_for_spheres}) and (\ref{eq:homology_identification_for_S_ba})).
\end{lem}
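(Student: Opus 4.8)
The plan is to exploit the product structure of $S_\ba$. Recall the homeomorphism $\xi_\ba\colon S_\ba\xrightarrow{\cong}(\mathbb S^2)^{m-k}$ from~(\ref{eq:homeo_giving_cell_decomp}), and group the $m$ coordinates of $(\mathbb S^2)^m$ into the pairs $\{i,j\}$ coming from cups $i\CupConnect j$, the singletons coming from half-cup endpoints, and the singletons coming from ray vertices. Up to this reordering of the factors (which on homology merely permutes tensor factors, with no Koszul signs since all classes in sight live in even degrees), the composite $\psi_\ba\circ\xi_\ba^{-1}\colon(\mathbb S^2)^{m-k}\to(\mathbb S^2)^m$ is a Cartesian product of the three \emph{elementary maps} $c\colon\mathbb S^2\to\mathbb S^2\times\mathbb S^2$, $x\mapsto(x,-x)$ (one per cup), $\mathrm{id}\colon\mathbb S^2\to\mathbb S^2$ (one per half-cup), and the constant $r\colon\mathrm{pt}=(\mathbb S^2)^0\to\mathbb S^2$, $*\mapsto p$ (one per ray). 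Since every space involved has torsion-free homology concentrated in even degrees, the Künneth isomorphism holds over $\mathbb Z$ and the homology cross product is natural, so $(\psi_\ba)_*\circ(\xi_\ba^{-1})_*$ is, after the same reordering, the tensor product of the induced maps $c_*$, $\mathrm{id}_*$, $r_*$. I would fix once and for all an orientation of $\mathbb S^2$, write $1\in H_0$ for the class of a point and $s=[\mathbb S^2]\in H_2$, and recall that under~(\ref{eq:homology_identification_for_spheres}) the class $l_U$ is the product of the $s$'s in the slots indexed by $U$ and the $1$'s elsewhere; likewise under~(\ref{eq:homology_identification_for_S_ba}) the class $M$ is the product cell whose slot over the cup/half-cup at $i_t$ is $s$ if that cup/half-cup is undotted and $1$ if it is dotted.

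The second step is to compute the three elementary induced maps. Here $r_*$ and $\mathrm{id}_*$ are immediate: a point maps to $1$, and $\mathrm{id}_*$ fixes both $1$ and $s$. The only computation is $c_*$: a point maps to $1\otimes 1$ (a point goes to a point), and $c_*(s)=s\otimes 1-1\otimes s$. This last identity I would prove by composing with the two projections: $\mathrm{pr}_1\circ c=\mathrm{id}$ has degree $1$ while $\mathrm{pr}_2\circ c$ is the antipodal map of $\mathbb S^2$, of degree $(-1)^{2+1}=-1$; writing $c_*(s)=a\otimes 1+1\otimes b$ with $a,b\in H_2(\mathbb S^2)$ (permissible since $H_1(\mathbb S^2)=0$) and applying $(\mathrm{pr}_i)_*$ forces $a=s$, $b=-s$. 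Translating into diagrams: an undotted cup $i\CupConnect j$ contributes $l_{\{i,\dots\}}-l_{\{j,\dots\}}$, i.e.\ a choice of one of its two endpoints for $U$ together with a sign $-1$ precisely when the chosen endpoint is the right one; an undotted half-cup forces its endpoint into $U$; and dotted cups, dotted half-cups and rays contribute a factor $1$, so their vertices never lie in $U$.

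The final step is to multiply out the tensor product $(\psi_\ba)_*[C_M]=\bigotimes(\text{elementary images applied to the }t\text{-th slot of }M)$ and read off the result. Expanding, the monomials are indexed exactly by the subsets $U$ containing one endpoint of each undotted cup plus the endpoints of all undotted half-cups — that is, by $\mathcal U_M$ — and each comes with sign $(-1)^{\#\{\text{undotted cups whose right endpoint lies in }U\}}=(-1)^{\Lambda_M(U)}$, yielding $\sum_{U\in\mathcal U_M}(-1)^{\Lambda_M(U)}l_U=L_M$, as claimed; a quick degree check confirms both sides lie in degree $2l$ when $M$ has $l$ undotted cups and half-cups. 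The one genuine subtlety to flag, rather than a real obstacle, is that $\psi_\ba$ is \emph{not} a cellular map for these CW-structures — e.g.\ a dotted cup sends the $0$-cell $\{p\}$ to the point $(p,-p)$, which sits inside a $2$-cell of $(\mathbb S^2)^m$ — so one cannot simply read $(\psi_\ba)_*$ off a cellular chain map; the point is that this $0$-dimensional image nonetheless represents the $H_0$-generator $l_\emptyset$ in those slots, which is exactly why dotted caps never contribute vertices to $U$. Working on homology via the product decomposition and Künneth, as above, sidesteps this cleanly, and the only thing requiring care is the consistent bookkeeping of orientations feeding into the sign $(-1)^{\Lambda_M(U)}$.
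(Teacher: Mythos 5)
Your proof is correct, and while it is kindred in spirit to the paper's (product decomposition plus K\"unneth plus a permutation to normalize the factor ordering), it takes a more self-contained route at the decisive step. The paper reduces to a normalized cup diagram $\ba_0$ with all half-cups pushed to the right, then splits into a pure cup-and-ray piece $\ba_0'$ and a pure half-cup piece $\ba_0''$; for $\ba_0'$ it invokes the corresponding type A result (citing~\cite{RT11},~\cite{Rus11},~\cite{SW16}), and for $\ba_0''$ it observes the inclusion is the identity, finally tensoring via K\"unneth and conjugating by the permutation $\tau_\ba$. You instead factor $\psi_\ba\circ\xi_\ba^{-1}$ all the way down to the three elementary maps $c\colon x\mapsto(x,-x)$, $\mathrm{id}$, and the constant map $p$, and prove the single nontrivial input $c_*(s)=s\times 1-1\times s$ from scratch by composing with the two projections and using that the antipodal map has degree $-1$ on $\mathbb S^2$. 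This buys a proof that is independent of the type A literature and makes the origin of the sign $(-1)^{\Lambda_M(U)}$ transparent; the paper's version is shorter precisely because it defers that computation to earlier work. Your remark that $\psi_\ba$ is not cellular for the chosen CW-structures (a dotted cup's $0$-cell lands at $(p,-p)$, inside a $2$-cell) is a genuine and useful observation explaining why a K\"unneth/degree argument rather than a naive cellular-chain argument is the right tool, and the even-degree hypothesis does indeed kill all Koszul signs under the factor reordering, so that part of your argument is sound as stated.
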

\begin{proof}
We first consider a cup diagram $\ba_0\in\mathbb B^{((k),(m-k))}$ in which all $l$ half-cups, $0\leq l\leq m-k$, are connected to the rightmost vertices $\{m-l+1,\ldots,m\}$. Let $\ba_0^\prime$ be the cup diagram on $m-l$ vertices obtained by deleting the $l$ rightmost vertices in $\ba_0$ and let $\ba_0^{\prime\prime}$ be the cup diagram on $l$ vertices obtained by deleting the leftmost $m-l$ vertices in $\ba_0$. Then $\ba_0^\prime$ consists of cups and rays only and $\ba_0^{\prime\prime}$ consists of half-cups only. Let $M_0$ be an enriched cup diagram obtained by placing dots on $\ba_0$ and let $M^\prime_0$ (resp.\ $M^{\prime\prime}_0$) be the corresponding diagram for $\ba_0^\prime$ (resp.\ $\ba_0^{\prime\prime}$).

By~\cite[\S 2]{Rus11}, the manifold $S_{\ba_0^\prime}$ also appears as a component of a topological Springer fiber of type A and it follows from~\cite[Lemma 3.12]{RT11} and~\cite[\S 5]{Rus11} (see also~\cite[Proposition 12]{SW16}) that the map $H_*(S_{\ba_0^\prime})\rightarrow H_*((\mathbb S^2)^{m-l})$ induced by the natural inclusion $\psi_{\ba_0^\prime}$ is given by the assignment $M_0^\prime\mapsto L_{M_0^\prime}$ as claimed. 

Next, note that we can identify $S_{\ba_0^{\prime\prime}}$ with $(\mathbb S^2)^l$ and the inclusion $\psi_{\ba_0^{\prime\prime}}$ with the identity $\mathrm{id}_{(\mathbb S^2)^l}$. In particular, the map $(\psi_{\ba_0^{\prime\prime}})_*$ is evidently given by $M_0^{\prime\prime}\mapsto l_V$, where $V\subseteq\{1,\ldots,l\}$ contains all vertices connected to the undotted half-cups in $M_0^{\prime\prime}$.   

Thus, by the preceding two paragraphs, we can compute the image of $M_0^\prime \otimes M_0^{\prime\prime}$ under the map 
\begin{equation}\label{eq:tensor_product_map}
(\psi_{\ba_0^\prime})_*\otimes(\psi_{\ba_0^{\prime\prime}})_*\colon H_*\left(S_{\ba_0^\prime}\right)\otimes H_*\left(S_{\ba_0^{\prime\prime}}\right)\to H_*\left((\mathbb S^2)^{m-l}\right)\otimes H_*\left((\mathbb S^2)^l\right)
\end{equation} 
as follows:
\begin{align*}
(\psi_{\ba_0^\prime})_*\otimes(\psi_{\ba_0^{\prime\prime}})_*(M_0^\prime \otimes M_0^{\prime\prime}) = L_{M_0^\prime}\otimes l_V &= \left(\sum_{U^\prime\in\mathcal U_{M_0^\prime}}(-1)^{\Lambda_{M_0^\prime}(U^\prime)}l_{U^\prime}\right)\otimes l_V\\
&=\sum_{U^\prime\in\mathcal U_{M_0^\prime}}(-1)^{\Lambda_{M_0^\prime}(U^\prime)}\left(l_{U^\prime}\otimes l_V\right).
\end{align*}

Note that the K\"unneth isomorphism 
\begin{equation}\label{eq:cross-product_1}
H_*\left(S_{\ba_0^\prime}\right)\otimes H_*\left(S_{\ba_0^{\prime\prime}}\right)\cong H_*\left(S_{\ba_0^\prime}\times S_{\ba_0^{\prime\prime}}\right)=H_*\left(S_{\ba_0}\right)
\end{equation}
sends $M_0^\prime\otimes M_0^{\prime\prime}$ to $M_0$ and 
\begin{equation}\label{eq:cross-product_2}
H_*\left((\mathbb S^2)^{m-l}\right)\otimes H_*\left((\mathbb S^2)^l\right)\cong H_*\left((\mathbb S^2)^{m-l}\times (\mathbb S^2)^l\right)=H_*\left((\mathbb S^2)^m\right)
\end{equation}
sends $l_{U^\prime}\otimes l_V$ to $l_U$, where $U$ contains all numbers in $U^\prime$ and all numbers obtained by adding $m-l$ to the numbers in $V$. Since the map $(\psi_{\ba_0})_*=(\psi_{\ba_0^\prime}\times\psi_{\ba_0^{\prime\prime}})_*$ is given by the composition of the maps in (\ref{eq:tensor_product_map}),(\ref{eq:cross-product_1}) and (\ref{eq:cross-product_2}), it is straightforward to check that it maps $M_0$ to $L_{M_0}$ as claimed (note that $\Lambda_{M_0^\prime}(U^\prime)=\Lambda_{M_0}(U)$).

For the general case let $\ba\in\mathbb B^{((k),(m-k))}$ be a cup diagram and let $M$ be an enriched cup diagram obtained by decorating $\ba$ with dots. Let $\ba_0\in\mathbb B^{((k),(m-k))}$ be the cup diagram obtained by rearranging the components of $\ba$ in such a way that the half-cups are all connected to the rightmost vertices and let $M_0$ be the enriched cup diagram for $\ba_0$ in which a component has a dot if and only if the corresponding component of $M$ has a dot. Define $\tau_\ba$ to be the permutation of the vertices realizing this rearrangement. We obtain an induced homeomorphism $\tau_\ba\colon(\mathbb S^2)^m\to(\mathbb S^2)^m$ which permutes the coordinates and restricts to a homeomorphism $S_\ba\to S_{\ba_0}$. We can then write the inclusion $\psi_\ba\colon S_\ba\hookrightarrow(\mathbb S^2)^m$ as the composition $\tau_\ba\circ\psi_{\ba_0}\circ\left(\tau^{-1}_\ba\vert_{S_\ba}\right)$. Then the claim in Lemma~\ref{lem:image_of_homology_gen} follows from the following calculation:
\begin{align*}
(\psi_\ba)_*(M) &=(\tau_\ba)_*\circ(\psi_{\ba_0})_*\circ(\tau^{-1}_\ba\vert_{S_\ba})_*(M)=(\tau_\ba)_*\circ(\psi_{\ba_0})_*(M_0)\\
&=(\tau_\ba)_*\left(\sum_{U\in\mathcal U_{M_0}}(-1)^{\Lambda_{M_0}(U)}l_U\right)=\sum_{U\in\mathcal U_M}(-1)^{\Lambda_M(U)}l_U.\qedhere 
\end{align*}
\end{proof}

\subsection{Ring structure of cohomology} In this subsection we construct a topological space which is homotopy equivalent to the exotic Springer fiber $\mathcal Fl^{((k),(m-k))}$ (see Theorem~\ref{thm:cohomology_ring}\ref{thm:cohomology_part_a}). Computing the cohomology ring of this space turns out to be straightforward. Thus, we obtain an explicit presentation of the cohomology ring of $\mathcal Fl^{((k),(m-k))}$ (see Theorem~\ref{thm:cohomology_ring}\ref{thm:cohomology_part_b}). 

For technical purposes we introduce a map
\[
\beta^{((k),(m-k))}\colon\bigsqcup_{l=0}^{m-k}\mathbb B^{((m-l),(l))}\longrightarrow\widetilde{\mathbb B}^{((k),(m-k))}.
\]
In order to define this map let $\ba\in\mathbb B^{((m-l),(l))}$ be a cup diagram, $0\leq l\leq m-k$. Let $T_{\ba}$ be the standard one-row bitableau of shape $((m-l),(l))$ associated to $\ba$ via Lemma~\ref{lem:bijection_tableaux_cups} and let $\widetilde{T}_{\ba}$ be the unique standard one-row bitableau of shape $((k),(m-k))$ obtained by moving the $m-l-k$ largest entries in the left tableau of $T_{\ba}$ to the right tableau (and reordering the right tableau to make it standard). The tableau $\widetilde{T}_{\ba}$ gives rise to a cup diagram in $\mathbb B^{((m-k),(k))}$ (again, via Lemma~\ref{lem:bijection_tableaux_cups}) which we additionally decorate by putting a dot on all rays as well as on each component which is connected to a vertex labeled by one of the $m-l-k$ numbers which we moved. We define the resulting enriched cup diagram in $\widetilde{\mathbb B}^{((k),(m-k))}$ as the image of $\ba$ under $\beta^{((k),(m-k))}$. 

\begin{ex}
Let $m=5$ and $k=3$. In the following, we explicitly describe the map
\[
\beta^{((3),(2))}\colon\mathbb{B}^{((5),(-))} \sqcup \mathbb{B}^{((4),(1))} \sqcup \mathbb{B}^{((3),(2))} \longrightarrow \widetilde{\mathbb B}^{((3),(2))}.
\]
\begin{enumerate}
\item Let $\textbf{a}=
\begin{tikzpicture}[scale=0.5,xscale=1,baseline={(0,-0.5)}]
\draw[dotted] (0.5,0) -- (5.5,0) -- (5.5,-2.2) -- (0.5,-2.2) -- cycle;

\foreach \x in {1,2,3,4,5}
\node[above] at (\x,0) {\tiny{$\x$}};
\foreach \x in {1,2,3,4,5}
\fill (\x,0) circle (3pt);

\draw[thick] (1,0) -- +(0,-2.2);
\draw[thick] (2,0) -- +(0,-2.2);
\draw[thick] (3,0) -- +(0,-2.2);
\draw[thick] (4,0) -- +(0,-2.2);
\draw[thick] (5,0) -- +(0,-2.2);
\end{tikzpicture}
\in \mathbb{B}^{((5),(-))}. 
$
The bitableau associated to $\ba$ is $$T_{\ba}=(\young(12345),-).$$ Since $l=0$ in this case, we have $m-l-k=2$ and so $\widetilde{T}_{\ba}=(\young(123),\young(45))$, which yields the following image:
\[
\beta^{((3),(2))}(\ba)=
\begin{tikzpicture}[scale=0.5,xscale=1,baseline={(0,-0.5)}]
\draw[dotted] (0.5,0) -- (5.5,0) -- (5.5,-2.2) -- (0.5,-2.2) -- cycle;

\foreach \x in {1,2,3,4,5}
\node[above] at (\x,0) {\tiny{$\x$}};
\foreach \x in {1,2,3,4,5}
\fill (\x,0) circle (3pt);

\draw (5,-0.5) circle (4pt);
\draw (3,-0.75) circle (4pt);
\draw (4.2,-0.75) circle (4pt);
\draw (1,-0.75) circle (4pt);
\draw (2,-0.75) circle (4pt);

\draw[thick] (1,0) -- +(0,-2.2);
\draw[thick] (2,0) -- +(0,-2.2);
\draw[thick] (3,0) -- +(0,-2.2);
\draw[thick] (4,0) to [out=270,in=180](5.5,-1.6);
\draw[thick] (5,0) to [out=270,in=180](5.5,-1.2);
\end{tikzpicture}
\in \widetilde{\mathbb{B}}^{((3),(2))}. 
\]

\item Now let $\bb=
\begin{tikzpicture}[scale=0.5,xscale=1,baseline={(0,-0.5)}]
\draw[dotted] (0.5,0) -- (5.5,0) -- (5.5,-2.2) -- (0.5,-2.2) -- cycle;

\foreach \x in {1,2,3,4,5}
\node[above] at (\x,0) {\tiny{$\x$}};
\foreach \x in {1,2,3,4,5}
\fill (\x,0) circle (3pt);

\draw[thick] (1,0) -- +(0,-2.2);
\draw[thick] (2,0) -- +(0,-2.2);
\draw[thick] (5,0) -- +(0,-2.2);

\draw[thick] (3,0) .. controls +(0,-1) and +(0,-1) .. +(1,0);
\end{tikzpicture}
\in \mathbb{B}^{((4),(1))}$. The bitableau associated to $\bb$ is $$T_{\bb}=(\young(1245),\young(3)).$$ In this case $l=1$ and so $m-k-l=1$ giving $\widetilde{T}_{\bb}=(\young(124),\young(35))$. Therefore:
\[
\beta^{((3),(2))}(\bb)=
\begin{tikzpicture}[scale=0.5,xscale=1,baseline={(0,-0.5)}]
\draw[dotted] (0.5,0) -- (5.5,0) -- (5.5,-1.7) -- (0.5,-1.7) -- cycle;

\foreach \x in {1,2,3,4,5}
\node[above] at (\x,0) {\tiny{$\x$}};
\foreach \x in {1,2,3,4,5}
\fill (\x,0) circle (3pt);

\draw (1,-0.75) circle (4pt);
\draw (2,-0.75) circle (4pt);
\draw (5.1,-0.75) circle (4pt);

\draw[thick] (1,0) -- +(0,-1.7);
\draw[thick] (2,0) -- +(0,-1.7);
\draw[thick] (5,0) to [out=270,in=180](5.5,-1);

\draw[thick] (3,0) .. controls +(0,-1) and +(0,-1) .. +(1,0);
\end{tikzpicture}
\in \widetilde{\mathbb{B}}^{((3),(2))}. 
\]
\item As a final example, let 
$\textbf{c}=
\begin{tikzpicture}[scale=0.5,xscale=1,baseline={(0,-0.5)}]
\draw[dotted] (0.5,0) -- (5.5,0) -- (5.5,-2.2) -- (0.5,-2.2) -- cycle;

\foreach \x in {1,2,3,4,5}
\node[above] at (\x,0) {\tiny{$\x$}};
\foreach \x in {1,2,3,4,5}
\fill (\x,0) circle (3pt);

\draw[thick] (1,0) .. controls +(0,-1.5) and +(0,-1.5) .. +(3,0);
\draw[thick] (2,0) .. controls +(0,-1) and +(0,-1) .. +(1,0);
\draw[thick] (5,0) -- +(0,-2.2);
\end{tikzpicture}
\in \mathbb{B}^{((3),(2))}. 
$
In this case we have $$T_{\textbf{c}}=\widetilde{T}_{\textbf{c}}=(\young(345),\young(12))$$ since $l=2$. Therefore we have, 
\[
\beta^{((3),(2))}(\textbf{c})=
\begin{tikzpicture}[scale=0.5,xscale=1,baseline={(0,-0.5)}]
\draw[dotted] (0.5,0) -- (5.5,0) -- (5.5,-2.2) -- (0.5,-2.2) -- cycle;

\foreach \x in {1,2,3,4,5}
\node[above] at (\x,0) {\tiny{$\x$}};
\foreach \x in {1,2,3,4,5}
\fill (\x,0) circle (3pt);

\draw (5,-1) circle (4pt);
\draw[thick] (1,0) .. controls +(0,-1.5) and +(0,-1.5) .. +(3,0);
\draw[thick] (2,0) .. controls +(0,-1) and +(0,-1) .. +(1,0);
\draw[thick] (5,0) -- +(0,-2.2);
\end{tikzpicture}
\in \widetilde{\mathbb{B}}^{((3),(2))}. 
\]
\end{enumerate}
\end{ex}

\begin{defi}
An enriched cup diagram in $\widetilde{\mathbb B}^{((k),(m-k))}$ contained in the image of $\beta^{((k),(m-k))}$ is called a {\it standard enriched cup diagram}. We write $s\widetilde{\mathbb B}^{((k),(m-k))}$ to denote the set of all standard enriched cup diagrams on $m$ vertices such that the total number of cups and half-cups is $m-k$. 
\end{defi}

\begin{ex}
The set $s\widetilde{\mathbb B}^{((3),(1))}$ consists of the following five diagrams:
\[
\begin{tikzpicture}[scale=0.5,xscale=1.2,baseline={(0,-0.5)}]
\draw[dotted] (0.5,0) -- (4.5,0) -- (4.5,-2) -- (0.5,-2) -- cycle;
\foreach \x in {1,2,3,4}
\node[above] at (\x,0) {\tiny{$\x$}};
\foreach \x in {1,2,3,4}
\fill (\x,0) circle (3pt);

\draw (1,-0.8) circle (4pt);
\draw (2,-0.8) circle (4pt);
\draw (3,-0.8) circle (4pt);
\draw (4.1,-0.8) circle (4pt);

\draw[thick] (1,0) -- +(0,-2);
\draw[thick] (2,0) -- +(0,-2);
\draw[thick] (3,0) -- +(0,-2);
\draw[thick] (4,0) to [out=270, in=180] (4.5,-1.2);
\end{tikzpicture}
\quad 
\begin{tikzpicture}[scale=0.5,xscale=1.2,baseline={(0,-0.5)}]
\draw[dotted] (0.5,0) -- (4.5,0) -- (4.5,-2) -- (0.5,-2) -- cycle;
\foreach \x in {1,2,3,4}
\node[above] at (\x,0) {\tiny{$\x$}};
\foreach \x in {1,2,3,4}
\fill (\x,0) circle (3pt);
\draw (3,-0.8) circle (4pt);
\draw (4,-0.8) circle (4pt);

\draw[thick] (1,0) .. controls +(0,-1) and +(0,-1) .. +(1,0);

\draw[thick] (3,0) -- +(0,-2);
\draw[thick] (4,0) -- +(0,-2);
\end{tikzpicture}
\quad
\begin{tikzpicture}[scale=0.5,xscale=1.2,baseline={(0,-0.5)}]
\draw[dotted] (0.5,0) -- (4.5,0) -- (4.5,-2) -- (0.5,-2) -- cycle;
\foreach \x in {1,2,3,4}
\node[above] at (\x,0) {\tiny{$\x$}};
\foreach \x in {1,2,3,4}
\fill (\x,0) circle (3pt);

\draw (1,-0.8) circle (4pt);
\draw (4,-0.8) circle (4pt);

\draw[thick] (2,0) .. controls +(0,-1) and +(0,-1) .. +(1,0);
\draw[thick] (1,0) -- +(0,-2);
\draw[thick] (4,0) -- +(0,-2);
\end{tikzpicture}
\quad
\begin{tikzpicture}[scale=0.5,xscale=1.2,baseline={(0,-0.5)}]
\draw[dotted] (0.5,0) -- (4.5,0) -- (4.5,-2) -- (0.5,-2) -- cycle;
\foreach \x in {1,2,3,4}
\node[above] at (\x,0) {\tiny{$\x$}};
\foreach \x in {1,2,3,4}
\fill (\x,0) circle (3pt);

\draw (1,-0.8) circle (4pt);
\draw (2,-0.8) circle (4pt);

\draw[thick] (3,0) .. controls +(0,-1.2) and +(0,-1.2) .. +(1,0);
\draw[thick] (1,0) -- +(0,-2);
\draw[thick] (2,0) -- +(0,-2);
\end{tikzpicture}
\quad
\begin{tikzpicture}[scale=0.5,xscale=1.2,baseline={(0,-0.5)}]
\draw[dotted] (0.5,0) -- (4.5,0) -- (4.5,-2) -- (0.5,-2) -- cycle;
\foreach \x in {1,2,3,4}
\node[above] at (\x,0) {\tiny{$\x$}};
\foreach \x in {1,2,3,4}
\fill (\x,0) circle (3pt);

\draw (1,-0.8) circle (4pt);
\draw (2,-0.8) circle (4pt);
\draw (3,-0.8) circle (4pt);

\draw[thick] (1,0) -- +(0,-2);
\draw[thick] (2,0) -- +(0,-2);
\draw[thick] (3,0) -- +(0,-2);
\draw[thick] (4,0) to [out=270, in=180] (4.5,-1.2);
\end{tikzpicture}
\]
The first diagram is the image of $\mathbb{B}^{((4),(-))}$ under the map $\beta^{((3),(1))}$ and the other diagrams constitute the image of the set $\mathbb{B}^{((3),(1))}$.  
\end{ex}

\begin{rem}\label{rem:standard_enriched_cup_diagram_remark}
The left endpoints of the cups and half-cups in $\ba\in\mathbb B^{((m-l),(l))}$ are precisely the left endpoints of the undotted cups and undotted half-cups in $\beta^{((k),(m-k))}(\ba)$. Thus, $\beta^{((k),(m-k))}$ maps $\mathbb B^{((m-l),(l))}$ to the enriched cup diagrams of degree $2l$. Moreover, since the sets containing all vertices connected to the left endpoints of cups and half-cups are different for $\ba,\bb\in\mathbb B^{((m-l),(l))}$ if $\ba\neq\bb$ (because these vertices are precisely the entries of the right tableau in the bitableau associated to a cup diagram via the bijection in Lemma~\ref{lem:bijection_tableaux_cups}), it follows that the standard enriched cup diagrams associated to $\ba$ and $\bb$ via $\beta^{((k),(m-k))}$ must be different. In other words, a standard enriched cup diagram is uniquely determined by the left endpoints of its undotted cups and undotted half-cups.  
\end{rem}


\begin{lem}\label{lem:linear_independence}
The line diagram sums $L_M$, where $M$ varies over all standard enriched cup diagrams in $s\widetilde{\mathbb B}^{((k),(m-k))}$, are $\mathbb Z$-linearly independent in $\mathbb Z[\mathfrak{L}_m]$.
\end{lem}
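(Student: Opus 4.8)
The plan is to establish a triangularity of the family $\{L_M\}$ with respect to a suitable total order on the $\mathbb Z$-basis $\mathfrak L_m$ of $\mathbb Z[\mathfrak L_m]$, in the spirit of the type A arguments of Russell--Tymoczko and Stroppel--Webster. First I would record two preliminaries. Since every $U\in\mathcal U_M$ contains exactly one vertex for each undotted cup and exactly one vertex for each undotted half-cup of $M$, all such $U$ have the same cardinality $l$, where $2l$ is the homological degree of $M$; hence $L_M$ is homogeneous of degree $2l$, and it suffices to prove linear independence within each fixed degree. By Remark~\ref{rem:standard_enriched_cup_diagram_remark} the standard enriched cup diagrams of degree $2l$ are exactly the images $\beta^{((k),(m-k))}(\ba)$ for $\ba\in\mathbb B^{((m-l),(l))}$, and such a diagram $M$ is uniquely determined by the set $\ell(M)\subseteq\{1,\ldots,m\}$ consisting of the left endpoints of its undotted cups together with the vertices of its undotted half-cups.

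Next I would identify a distinguished ``leading term''. The subset $\ell(M)$ lies in $\mathcal U_M$: it selects precisely one endpoint (the left one) of each undotted cup and the vertex of each undotted half-cup, and nothing else. Since $\ell(M)$ contains no right endpoint of any cup, $\Lambda_M(\ell(M))=0$, so $l_{\ell(M)}$ occurs in $L_M$ with coefficient $+1$. Now fix a linear order $\preceq$ on the $l$-element subsets of $\{1,\ldots,m\}$ refining the componentwise order on their increasing rearrangements (for instance, lexicographic order, which does refine it). I claim $\ell(M)$ is the $\preceq$-minimum of $\mathcal U_M$. Indeed, every $U\in\mathcal U_M$ is obtained from $\ell(M)$ by replacing, for some subset of the undotted cups, the left endpoint $i$ by the corresponding right endpoint $j>i$; because $j$ is the right endpoint of a cup, it is neither the left endpoint of any cup nor a half-cup vertex (each vertex lies on exactly one cup, half-cup or ray), so $j\notin\ell(M)$. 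Performing these replacements one at a time, each step removes an element and inserts a strictly larger one not already present, which increases the sorted tuple componentwise (weakly everywhere, strictly somewhere). Hence $\ell(M)\le U$ componentwise with equality only for $U=\ell(M)$, and therefore $L_M=l_{\ell(M)}+\bigl(\text{a }\mathbb Z\text{-combination of }l_U\text{ with }\ell(M)\prec U\bigr)$.

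With this triangularity the conclusion is immediate. Suppose $\sum_M c_M L_M=0$ in degree $2l$ with not all $c_M$ zero, and choose $M_0$ with $c_{M_0}\neq 0$ and $\ell(M_0)$ minimal in $\preceq$ among $\{\ell(M)\mid c_M\neq 0\}$. For $M\neq M_0$ with $c_M\neq 0$, the basis vector $l_{\ell(M_0)}$ can appear in $L_M$ only if $\ell(M_0)\in\mathcal U_M$, which by the claim forces $\ell(M)\preceq\ell(M_0)$, hence $\ell(M)=\ell(M_0)$ by minimality, hence $M=M_0$ by the uniqueness statement from Remark~\ref{rem:standard_enriched_cup_diagram_remark} --- a contradiction. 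Thus the coefficient of $l_{\ell(M_0)}$ in $\sum_M c_M L_M$ is $c_{M_0}\neq 0$, contradicting $\sum_M c_M L_M=0$ since $\mathfrak L_m$ is a $\mathbb Z$-basis of $\mathbb Z[\mathfrak L_m]$.

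The only step that needs genuine care is the claim that $\ell(M)$ is the componentwise-minimal element of $\mathcal U_M$: this rests on the structural description of $\mathcal U_M$ as the set of subsets obtained from $\ell(M)$ by flipping some undotted cups from their left to their right endpoints, together with the elementary fact that such a flip replaces an element by a strictly larger one absent from the set and so strictly increases the increasing rearrangement in the componentwise order. Everything else is routine bookkeeping with the definitions of $L_M$, $\Lambda_M$, $\mathcal U_M$ and the map $\beta^{((k),(m-k))}$.
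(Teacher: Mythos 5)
Your proof is correct and takes essentially the same approach as the paper: both identify $\ell(M)=U_M$ (the set of left endpoints of undotted cups and undotted half-cup vertices) as a leading term of $L_M$, establish that it is minimal in $\mathcal U_M$ under a suitable order on $l$-element subsets, and invoke Remark~\ref{rem:standard_enriched_cup_diagram_remark} to ensure the leading terms are distinct, yielding a triangularity argument. Your use of the componentwise partial order refined to a total order is a mild sharpening of the paper's direct use of the lexicographic order~(\ref{eq:total_order}), but the core idea is identical.
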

\begin{proof}
Note that it suffices to prove that the elements $L_M$, where $M$ varies over all standard enriched cup diagrams in $s\widetilde{\mathbb B}^{((k),(m-k))}$ such that the total number of undotted cups and undotted half-cups is precisely $l$, $0\leq l\leq m$, are $\mathbb Z$-linearly independent. In the following we write $s\widetilde{\mathbb B}^{((k),(m-k))}_l$ to denote the set of all such standard enriched cup diagrams.   

We begin by defining a total order on the subsets of $\{1,\ldots,m\}$ of cardinality $l$ by setting
\begin{equation} \label{eq:total_order}
\{i_1<\cdots<i_l\}<\{i'_1<\cdots<i'_l\} \, \Leftrightarrow \, \exists\, r \, \colon i_r<i_r' \text{ and }i_1=i'_1,\ldots,i_{r-1}=i'_{r-1}.
\end{equation}

This induces a total order on all line diagrams $l_U\in\mathfrak{L}_m$, where $U\subseteq\{1,\ldots,m\}$ has cardinality $l$, i.e., we obtain a total order on our basis of the submodule of $\mathbb Z[\mathfrak{L}_m]$ spanned by all line diagrams of degree $2l$.

Given a standard enriched cup diagram $M\in s\widetilde{\mathbb B}^{((k),(m-k))}_l$, we define $U_M\in\mathcal U_M$ as the set containing the left endpoints of all undotted cups as well as the endpoints of all undotted half-cups. Given a different standard enriched cup diagrams $N\in s\widetilde{\mathbb B}^{((k),(m-k))}_l\setminus\{M\}$, we have $U_M\neq U_N$ because knowing the left endpoints of undotted cups and undotted half-cups uniquely determines a standard enriched cup diagram in $s\widetilde{\mathbb B}^{((k),(m-k))}_l$ by Remark~\ref{rem:standard_enriched_cup_diagram_remark}. Hence, it makes sense to define a total order on $s\widetilde{\mathbb B}^{((k),(m-k))}_l$ by setting $M < N$ if and only if $U_M < U_N$ with respect to the order (\ref{eq:total_order}).

Let $M_1<M_2<\ldots<M_{\binom{m}{l}}$ denote the elements of $s\widetilde{\mathbb B}^{((k),(m-k))}_l$ and assume $\sum_i\lambda_iL_{M_i}=0$ for $\lambda_i\in\mathbb Z$. Then the line diagram $l_{U_{M_1}}$ appears in $L_{M_1}$ with non-zero coefficient but it does not appear in any other $L_{M_i}$, $i\neq 1$, because we have $l_{U_{M_1}}<l_{U_{M_i}}$ and $l_{U_{M_i}}<l_U$ for all $U\in\mathcal U_{M_i}\setminus\{U_{M_i}\}$. Hence, we deduce that $\lambda_1=0$. By repeating the same argument with $M_2<\ldots<M_{\binom{m}{l}}$ we obtain that $\lambda_2=0$ and continuing in this way shows that all coefficients must be zero. 
\end{proof}

\begin{prop}\label{prop:injection}
The natural inclusion of $\mathcal S^{((k),(m-k))}$ into $(\mathbb S^2)^m$ induces an injection $H_*(\mathcal S^{((k),(m-k))})\hookrightarrow H_*((\mathbb S^2)^m)$ in homology. We even have an isomorphism 
\[
H_{2l}(\mathcal S^{((k),(m-k))})\cong H_{2l}((\mathbb S^2)^m)
\] 
of $\mathbb Z$-modules for $0\leq l\leq m-k$.
\end{prop}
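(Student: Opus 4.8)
The strategy is to determine the image of the map $\iota_*\colon H_*(\mathcal S^{((k),(m-k))})\to H_*((\mathbb S^2)^m)$ induced by the inclusion $\iota$ exactly, by combining Lemma~\ref{lem:image_of_homology_gen} with (the proof of) Lemma~\ref{lem:linear_independence}. First I would record the elementary facts about the two sides. By the K\"unneth theorem, $H_*((\mathbb S^2)^m)$ is free and concentrated in even degrees, with $H_{2l}((\mathbb S^2)^m)$ free of rank $\binom{m}{l}$ and $\mathbb Z$-basis $\{l_U\mid U\subseteq\{1,\ldots,m\},\ |U|=l\}$ under the identification~(\ref{eq:homology_identification_for_spheres}). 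On the other side, via the homeomorphism $\mathcal S^{((k),(m-k))}\cong\mathcal Fl^{((k),(m-k))}$ of Theorem~\ref{thm:main_result_1} and the affine paving of Theorem~\ref{thm:bialynicki-birula}, the homology $H_*(\mathcal S^{((k),(m-k))})$ is free, concentrated in even degrees, vanishes in degrees above $2(m-k)$, and has rank $\binom{m}{l}$ in degree $2l$ for $0\le l\le m-k$ by Corollary~\ref{cor:Bett_numbers}.

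Next I would bring in the subcomplexes $S_\ba\subseteq(\mathbb S^2)^m$, $\ba\in\mathbb B^{((k),(m-k))}$. Since each inclusion $\psi_\ba\colon S_\ba\hookrightarrow(\mathbb S^2)^m$ factors through $\iota$, we have $(\psi_\ba)_*(H_*(S_\ba))\subseteq\im(\iota_*)$, and Lemma~\ref{lem:image_of_homology_gen} identifies $(\psi_\ba)_*$ with $M\mapsto L_M$; hence $\im(\iota_*)$ contains $L_M$ for every enriched cup diagram $M$, in particular for every standard enriched cup diagram $M\in s\widetilde{\mathbb B}^{((k),(m-k))}$. Fixing a degree $2l$ with $0\le l\le m-k$, I would then rerun the argument in the proof of Lemma~\ref{lem:linear_independence}: by Remark~\ref{rem:standard_enriched_cup_diagram_remark}, $\beta^{((k),(m-k))}$ restricts to a bijection $\mathbb B^{((m-l),(l))}\to s\widetilde{\mathbb B}^{((k),(m-k))}_l$, so the sets $U_M$ attached to the diagrams $M\in s\widetilde{\mathbb B}^{((k),(m-k))}_l$ run bijectively over all $l$-element subsets of $\{1,\ldots,m\}$, and $L_M-l_{U_M}$ is a $\mathbb Z$-linear combination of line diagrams $l_U$ with $U>U_M$, the coefficient of $l_{U_M}$ being $+1$ because $U_M$ contains no right endpoint of a cup and hence $\Lambda_M(U_M)=0$. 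Thus the transition matrix between $\{l_U\mid|U|=l\}$ and $\{L_M\mid M\in s\widetilde{\mathbb B}^{((k),(m-k))}_l\}$ is unitriangular with respect to the order~(\ref{eq:total_order}), so the latter is a $\mathbb Z$-basis of $H_{2l}((\mathbb S^2)^m)$; in particular $\im(\iota_*)\supseteq H_{2l}((\mathbb S^2)^m)$.

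Putting the two pieces together, $\im(\iota_*)$ contains $H_{2l}((\mathbb S^2)^m)$ for every $0\le l\le m-k$, while $\im(\iota_*)$, being a quotient of $H_*(\mathcal S^{((k),(m-k))})$, is concentrated in even degrees $\le 2(m-k)$; hence $\im(\iota_*)=\bigoplus_{l=0}^{m-k}H_{2l}((\mathbb S^2)^m)$. For $0\le l\le m-k$ the restriction $\iota_*\colon H_{2l}(\mathcal S^{((k),(m-k))})\to H_{2l}((\mathbb S^2)^m)$ is therefore a surjection of free abelian groups of equal finite rank $\binom{m}{l}$, hence an isomorphism, which also proves the second assertion; in odd degrees and in degrees above $2(m-k)$ the source vanishes, so $\iota_*$ is injective there as well. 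Altogether $\iota_*$ is injective.

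I expect the only genuine subtlety to be obtaining the statement over $\mathbb Z$ rather than merely over $\mathbb Q$. This is precisely where two inputs are indispensable: the refinement of Lemma~\ref{lem:linear_independence} to a unitriangular transition matrix, so that $\{L_M\}$ is an honest $\mathbb Z$-basis and not just a linearly independent set, and the torsion-freeness of $H_*(\mathcal S^{((k),(m-k))})$ coming from the affine paving; without either one the argument would only produce an injection onto a finite-index subgroup. Everything else is bookkeeping with the factorization $\psi_\ba=\iota\circ(S_\ba\hookrightarrow\mathcal S^{((k),(m-k))})$ and the two cited lemmas.
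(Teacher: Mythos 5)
Your proposal follows the same route as the paper: factor the inclusion through the $S_\ba$, use Lemma~\ref{lem:image_of_homology_gen} to identify the image of each $M$ with $L_M$, invoke Lemma~\ref{lem:linear_independence}, and count ranks against Corollary~\ref{cor:Bett_numbers} and Remark~\ref{rem:standard_enriched_cup_diagram_remark}. The one place you go beyond the paper's written argument is in sharpening Lemma~\ref{lem:linear_independence} to the observation that $L_M=l_{U_M}+(\text{terms } l_U \text{ with } U>U_M)$ with leading coefficient exactly $+1$ (since $U_M$ contains no right endpoint of a cup, so $\Lambda_M(U_M)=0$), giving a unitriangular transition matrix and hence that $\{L_M\}_{M\in s\widetilde{\mathbb B}^{((k),(m-k))}_l}$ is an honest $\mathbb Z$-basis of $H_{2l}((\mathbb S^2)^m)$, not merely a linearly independent set. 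This is exactly what is needed to get surjectivity of $\iota_*$ in degrees $\le 2(m-k)$, and thereby the isomorphism claim over $\mathbb Z$; the paper's proof only argues explicitly for injectivity of $\gamma_{((k),(m-k))}$ and leaves the surjectivity/isomorphism step implicit (it is used in the proof of Theorem~\ref{thm:cohomology_ring}, where the Whitehead argument genuinely needs the inclusion to be a homology isomorphism through degree $2(m-k)$). Your version also makes explicit where torsion-freeness of $H_*(\mathcal S^{((k),(m-k))})$ from the affine paving enters; this is the right thing to flag, as the equal-rank step is not automatic over $\mathbb Z$ without it. In short: correct, same method, slightly more scrupulous about the integral refinements the statement actually asserts.
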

\begin{proof}
Given $\ba\in\mathbb B^{((k),(m-k))}$, let $\phi_\ba\colon S_\ba\hookrightarrow\mathcal S^{((k),(m-k))}$, $\psi_\ba\colon S_\ba\hookrightarrow (\mathbb S^2)^m$ be the natural inclusions. We obtain a commutative diagram
\begin{equation} \label{eq:key_comm_diagram}
\begin{xy}
	\xymatrix{
		\displaystyle\bigoplus\limits_{\ba\in\mathbb B^{((k),(m-k))}} H_*(S_\ba) \ar[rr]^{\phi_{((k),(m-k))}} \ar@/^1.7pc/[rrrr]^{\psi_{((k),(m-k))}} && H_*(\mathcal S^{((k),(m-k))}) \ar[rr]^{\gamma_{((k),(m-k))}}	&& H_*((\mathbb S^2)^m)
	}
\end{xy},
\end{equation} 
where $\phi_{((k),(m-k))}$ (resp.\ $\psi_{((k),(m-k))}$) is the direct sum of the maps induced by $\phi_\ba$ (resp.\ $\psi_\ba$) and $\gamma_{((k),(m-k))}$ is induced by the inclusion $\mathcal S^{((k),(m-k))}\hookrightarrow\left(\mathbb S^2\right)^m$. 

We have a chain of equalities
\[
\gamma_{((k),(m-k))}\left(\phi_{((k),(m-k))}(M)\right)=\psi_{((k),(m-k))}(M)=L_M,
\]
where the first equality follows from the commutative diagram (\ref{eq:key_comm_diagram}) and the second one follows from Lemma~\ref{lem:image_of_homology_gen}. Hence, by Lemma~\ref{lem:linear_independence}, we see that the elements $$\gamma_{((k),(m-k))}\left(\phi_{((k),(m-k))}(M)\right)$$ are linearly independent if $M$ varies over all the whole set $s\widetilde{\mathbb B}^{((k),(m-k))}$. In particular, it follows that $\phi_{((k),(m-k))}(M)$ must be $\mathbb Z$-linearly independent for $M\in s\widetilde{\mathbb B}^{((k),(m-k))}$. Since the cardinality of $s\widetilde{\mathbb B}^{((k),(m-k))}$ equals the dimension of $H_*(\mathcal S^{((k),(m-k))})$ by Corollary~\ref{cor:Bett_numbers} and Remark~\ref{rem:standard_enriched_cup_diagram_remark}, these elements form a basis of $H_*(\mathcal S^{((k),(m-k))})$. In particular, it directly follows that the map $\gamma_{((k),(m-k))}$ is injective. 
\end{proof}

\begin{thm}\label{thm:cohomology_ring}
The following statements hold:
\begin{enumerate}[(a)]
\item \label{thm:cohomology_part_a} There exists a homotopy equivalence 
\begin{equation}\label{eq:homotopy_equivalence}
\mathcal Fl^{((k),(m-k))}\simeq\mathrm{Sk}_{2(m-k)}^m=\bigcup_{\begin{subarray}{l}
U\subseteq\{1,\ldots,m\}\\ \vert U\vert=m-k\end{subarray}} C_{l_U}\subseteq (\mathbb S^2)^m
\end{equation}
between the $((k),(m-k))$-exotic Springer fiber and the $2(m-k)$-skeleton $\mathrm{Sk}_{2(m-k)}^m$ of the CW-complex $(\mathbb S^2)^m$. 

\item \label{thm:cohomology_part_b} We have an isomorphism of graded algebras 
\begin{equation}\label{eq:cohomology_iso_alg_spr_skel}
H^*(\mathcal Fl^{((k),(m-k))},\mathbb C)\cong H^*(\mathrm{Sk}_{2(m-k)}^m,\mathbb C).
\end{equation}
In particular, using~(\ref{eq:cohomology_iso_alg_spr_skel}), we obtain an explicit presentation of the graded algebra $H^*(\mathcal Fl^{((k),(m-k))},\mathbb C)$ given by 
\begin{equation}\label{eq:cohomology_presentation}
\bigslant{\mathbb C[X_1,\ldots,X_m]}{\left\langle X_i^2, X_I\;
  \begin{array}{|c}
  1 \leq i \leq m,\\
  I\subseteq  \{1,\ldots,m\}, |I|=m-k+1
  \end{array}
  \right\rangle},
\end{equation}
where $X_I=\prod_{i\in I}X_i$ and $\mathrm{deg}(X_i)=2$. Furthermore, the presentation (\ref{eq:cohomology_presentation}) yields a distinguished basis of $H^{2l}(\mathcal Fl^{((k),(m-k))},\mathbb C)$ given by all monomials $X_I$ such that $\vert I\vert=l$, $I\subseteq\{1,\ldots,m\}$, $0\leq l\leq m-k$.
\end{enumerate}
\end{thm}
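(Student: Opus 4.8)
The plan is to establish part~\ref{thm:cohomology_part_a} first and then derive part~\ref{thm:cohomology_part_b} from it by a direct cellular computation. For part~\ref{thm:cohomology_part_a}, the key input is Proposition~\ref{prop:injection}: the inclusion $\mathcal S^{((k),(m-k))}\hookrightarrow(\mathbb S^2)^m$ induces an injection on homology which is in fact an isomorphism in degrees $2l$ for $0\leq l\leq m-k$. I would argue that $\mathcal S^{((k),(m-k))}$, viewed inside $(\mathbb S^2)^m$ with its product CW-structure, is contained in the $2(m-k)$-skeleton $\mathrm{Sk}_{2(m-k)}^m$: indeed each $S_\ba$ has real dimension $2(m-k)$ (it is a product of $m-k$ two-spheres, by the remark following the definition of $S_\ba$), so the union lies in that skeleton. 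Now both $\mathcal S^{((k),(m-k))}$ and $\mathrm{Sk}_{2(m-k)}^m$ have trivial homology in odd degrees and in degrees $>2(m-k)$, and the inclusion $\mathcal S^{((k),(m-k))}\hookrightarrow\mathrm{Sk}_{2(m-k)}^m\hookrightarrow(\mathbb S^2)^m$ composes to the map from Proposition~\ref{prop:injection}. Since that composite is an isomorphism in every degree $\leq 2(m-k)$ and both spaces have no homology above that degree, the first inclusion $\mathcal S^{((k),(m-k))}\hookrightarrow\mathrm{Sk}_{2(m-k)}^m$ induces an isomorphism on all homology groups. Both spaces are simply connected CW-complexes (each is a union of simply connected pieces glued along simply connected intersections, which one can check via van Kampen, or directly since $\mathrm{Sk}_{2(m-k)}^m$ is $1$-connected when $m-k\geq 1$, and the case $m=k$ is a point), so by the homology Whitehead theorem the inclusion is a homotopy equivalence. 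Combining with Theorem~\ref{thm:main_result_1}, which gives $\mathcal Fl^{((k),(m-k))}\cong\mathcal S^{((k),(m-k))}$, yields the homotopy equivalence in~(\ref{eq:homotopy_equivalence}).

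For part~\ref{thm:cohomology_part_b}, the graded algebra isomorphism~(\ref{eq:cohomology_iso_alg_spr_skel}) is immediate from part~\ref{thm:cohomology_part_a} since cohomology with its cup product is a homotopy invariant. It remains to compute $H^*(\mathrm{Sk}_{2(m-k)}^m,\mathbb C)$ and identify it with the presentation~(\ref{eq:cohomology_presentation}). I would use the standard fact that $H^*((\mathbb S^2)^m,\mathbb C)\cong\mathbb C[X_1,\ldots,X_m]/\langle X_i^2\mid 1\leq i\leq m\rangle$ with $\deg X_i=2$, where $X_i$ is the pullback of the generator of $H^2(\mathbb S^2)$ under the $i$th projection; under the cellular basis, $X_I=\prod_{i\in I}X_i$ is dual to the cell $C_{l_{I}}$ (the cell whose undotted lines are exactly those indexed by $I$). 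The skeleton inclusion $\iota\colon\mathrm{Sk}_{2(m-k)}^m\hookrightarrow(\mathbb S^2)^m$ induces a surjection $\iota^*$ on cohomology in degrees $\leq 2(m-k)$ (cellular cochains in low degree agree) and is zero above; concretely, $\iota^*$ kills exactly the classes $X_I$ with $|I|\geq m-k+1$ and is an isomorphism onto the span of the $X_I$ with $|I|\leq m-k$. Since $\iota^*$ is a ring map, $H^*(\mathrm{Sk}_{2(m-k)}^m,\mathbb C)$ is the quotient of $\mathbb C[X_1,\ldots,X_m]/\langle X_i^2\rangle$ by the ideal generated by all $X_I$ with $|I|=m-k+1$ (these generate the ideal spanned by all square-free monomials of degree $\geq m-k+1$, because every square-free monomial of higher degree is divisible by one of degree exactly $m-k+1$, and $X_i^2=0$ already). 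This gives~(\ref{eq:cohomology_presentation}), and the monomials $X_I$ with $|I|=l\leq m-k$ form a basis of the degree-$2l$ part because they biject with the $(m-k)$-subsets... more precisely with the $l$-subsets, matching $\dim H^{2l}=\binom{m}{l}$ from Corollary~\ref{cor:Bett_numbers}.

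The main obstacle I anticipate is the verification that the inclusion $\mathcal S^{((k),(m-k))}\hookrightarrow\mathrm{Sk}_{2(m-k)}^m$ really is a homotopy equivalence and not merely a homology isomorphism — this requires knowing both spaces are simply connected (or at least have isomorphic fundamental groups compatibly), which one should confirm carefully, e.g.\ by noting that each $S_\ba$ is simply connected, that pairwise intersections $S_\ba\cap S_\bb$ are simply connected by Theorem~\ref{thm:intersection_of_components} (they are products of spheres), and applying van Kampen inductively; the skeleton side is simply connected since $\mathrm{Sk}_{2(m-k)}^m$ has no cells in dimension $1$. A secondary technical point is making precise that $\iota^*$ has the claimed image and kernel; this is a routine cellular-cochain argument since the cellular cochain complex of $(\mathbb S^2)^m$ has zero differential, so cohomology is spanned by the duals of cells, and restricting to a skeleton simply discards the high-dimensional cells while retaining the ring structure on the rest.
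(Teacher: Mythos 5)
Your approach for part (b), and the overall strategy of producing a degree-wise homology isomorphism from $\mathcal S^{((k),(m-k))}$ to $\mathrm{Sk}_{2(m-k)}^m$ and then invoking the homological Whitehead theorem, matches the paper's. However, there is a genuine gap in your part (a): the inclusion $\mathcal S^{((k),(m-k))}\hookrightarrow(\mathbb S^2)^m$ does \emph{not} land inside $\mathrm{Sk}_{2(m-k)}^m$, so the inclusion does not factor through the skeleton as a map of spaces. The $2(m-k)$-skeleton of $(\mathbb S^2)^m$ consists precisely of those $m$-tuples $(x_1,\ldots,x_m)$ with at most $m-k$ coordinates different from the north pole $p$. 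But if $\ba\in\mathbb B^{((k),(m-k))}$ contains a cup $i\CupConnect j$, then $S_\ba$ contains points with $x_i\neq p$ \emph{and} $x_j=-x_i\neq p$ simultaneously: a cup diagram with $c$ cups and $h$ half-cups contributes up to $2c+h=(m-k)+c$ coordinates away from $p$, which exceeds $m-k$ whenever $c>0$. (Concretely, for $m=2$, $k=1$ and $\ba$ the single cup, $S_\ba=\{(x,-x)\}$ contains $(q,-q)$ with $q$ on the equator, which lies outside the wedge $\mathrm{Sk}_2^2=\mathbb S^2\vee\mathbb S^2$.) The fact that $S_\ba$ has real \emph{dimension} $2(m-k)$ does not force it to lie in the $2(m-k)$-skeleton; the CW-structure the paper puts on $S_\ba$ via the homeomorphism $\xi_\ba$ is not the subspace cell structure inherited from $(\mathbb S^2)^m$.

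The paper circumvents this by applying the cellular approximation theorem: the natural inclusion $\mathcal S^{((k),(m-k))}\hookrightarrow(\mathbb S^2)^m$ is replaced by a \emph{homotopic} cellular map $j$, which (since $\mathcal S^{((k),(m-k))}$, with its own CW-structure, has no cells above real dimension $2(m-k)$) does factor through $\mathrm{Sk}_{2(m-k)}^m$. From there your argument goes through: $j$ induces the same map on homology as the inclusion, so it is an isomorphism in degrees $\leq 2(m-k)$ and both spaces vanish above; both are simply connected CW-complexes; apply homological Whitehead. Your computation of the cohomology ring in part (b) via the surjection $H^*((\mathbb S^2)^m)\twoheadrightarrow H^*(\mathrm{Sk}_{2(m-k)}^m)$ and the description of the kernel is correct and essentially the paper's argument.
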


\begin{proof}
Using the homeomorphism from Theorem~\ref{thm:main_result_1} we can work with $\mathcal S^{((k),(m-k))}$ instead of $\mathcal Fl^{((k),(m-k))}$ when arguing topologically. By the cellular approximation theorem we can replace the natural inclusion of $\mathcal S^{((k),(m-k))}$ into $(\mathbb S^2)^m$ (which in general is not a cellular map) by a homotopic map $j$ which factors through $\mathrm{Sk}_{2(m-k)}^m$ (note that by Remark~\ref{rem:dimension} the complex dimension of $\mathcal Fl^{((k),(m-k))}$ is $m-k$ so there are no cells of real dimension greater than $2(m-k)$) and we obtain a commutative diagram
\begin{equation}\label{eq:comm_diag_cell_approx}
\begin{tikzcd} 
\mathcal S^{((k),(m-k))} \arrow{rr}{j} \arrow{dr}[swap]{j} && \left(\mathbb S^2\right)^m \\
 & \mathrm{Sk}_{2(m-k)}^m \arrow[hookrightarrow]{ru} &
\end{tikzcd}
\end{equation}
The horizontal map $j$ in (\ref{eq:comm_diag_cell_approx}) induces isomorphisms $H_{2l}(\mathcal S^{((k),(m-k))})\cong H_{2l}((\mathbb S^2)^m)$ for all $0\leq l\leq m-k$ because the natural inclusion induces such isomorphisms by Proposition~\ref{prop:injection}. Since the natural inclusion of $\mathrm{Sk}_{2(m-k)}^m$ into $(\mathbb S^2)^m$ is well known to induce isomorphisms $H_{2l}(\mathrm{Sk}_{2(m-k)}^m)\cong H_{2l}((\mathbb S^2)^m)$ for all $0\leq l\leq m-k$, we deduce that $j$ induces isomorphisms $H_{2l}(\mathcal S^{((k),(m-k))})\cong H_{2l}(\mathrm{Sk}_{2(m-k)}^m)$ for all $0\leq l\leq m-k$. Since $\mathcal S^{((k),(m-k))}$ and $\mathrm{Sk}_{2(m-k)}^m$ are simply-connected CW-complexes we can apply the homological version of the Whitehead theorem to the map $j$ to prove that it yields the desired homotopy equivalence $\mathcal Fl^{((k),(m-k))}\cong\mathcal S^{((k),(m-k))}\simeq \mathrm{Sk}_{2(m-k)}^m$. 

Since $\mathcal S^{((k),(m-k))}\simeq \mathrm{Sk}_{2(m-k)}^m$ are homotopy equivalent it suffices to compute the cohomology ring of $\mathrm{Sk}_{2(m-k)}^m$. Note that the map $H^*((\mathbb S^2)^m)\twoheadrightarrow H^*(\mathrm{Sk}_{2(m-k)}^m)$ induced by the inclusion $\mathrm{Sk}_{2(m-k)}^m \hookrightarrow (\mathbb S^2)^m$ is surjective. This follows from the standard isomorphisms $H^{2l}(\mathrm{Sk}_{2(m-k)}^m)\cong H^{2l}((\mathbb S^2)^m)$ for $0\leq l\leq m-k$ and it is obviously true for $l>m-k$ because $H^{2l}(\mathrm{Sk}_{2(m-k)}^m)\cong H_{2l}(\mathrm{Sk}_{2(m-k)}^m)\cong\{0\}$. In particular, the kernel of the surjection is given by the ideal $\bigoplus_{i>2(m-k)}^\infty H^i((\mathbb S^2)^m)$ and we obtain an isomorphism of algebras
\begin{equation}\label{eq:surjection_cohomology}
\bigslant{\displaystyle \bigoplus_{i=0}^\infty H^i((\mathbb S^2)^m)}{ \displaystyle \bigoplus_{i>2(m-k)}^\infty H^i((\mathbb S^2)^m)}\xrightarrow\cong H^*(\mathrm{Sk}_{2(m-k)}^m).
\end{equation} 
Using the isomorphism
\[
H^*((\mathbb S^2)^m)\cong \bigslant{\mathbb Z[X_1,\ldots,X_m]}{\langle X_i^2 \;| 1\leq i\leq m \rangle} 
\]
we see that the quotient in (\ref{eq:surjection_cohomology}) can be identified with the quotient in Theorem~\ref{thm:cohomology_ring} (after additionally switching to complex coefficients).

The statement about the basis of the algebra in (\ref{eq:cohomology_presentation}) is evident.
\end{proof}

\begin{ex}\label{ex:homeo_vs_homotopy}
Note that $\mathrm{Sk}_2^m$ is a bouquet of $m$ two-spheres, i.e., $m$ two-spheres glued together in a single point. It is a standard exercise to show that a bouquet of $m$ two-spheres is homotopy equivalent to a chain of $m$ two-spheres. In particular, for the special case $\mathcal Fl^{((m-1),(1))}\cong\mathcal S^{((m-1),(1))}\simeq\mathrm{Sk}_2^m$ we already obtain the homotopy equivalence (\ref{eq:homotopy_equivalence}) from the discussion in Example~\ref{ex:topological_Springer_fiber}. Note, that $\mathrm{Sk}_{2(m-k)}^m$ and $\mathcal Fl^{((k),(m-k))}$ are in general not homeomorphic, e.g., removing the gluing point of the bouquet $\mathrm{Sk}_2^m$ yields $m$ distinct connected components, whereas removing any point in $\mathcal Fl^{((m-1),(1))}\cong\mathcal S^{((m-1),(1))}$ yields at most two connected components.  
\end{ex}

\subsection{Weyl group action on cohomology} Finally, we define an action of the Weyl group $\mathcal W_{C_m}$ of type $C_m$ on the space $\mathrm{Sk}_{2(m-k)}^m$, describe the induced action in cohomology and compare the resulting representations to Kato's original exotic Springer representation (see Proposition~\ref{prop:Weyl_group_action_cohomology} and Remark~\ref{rem:connection_to_Kato}). 

Recall that the Weyl group $\mathcal W_{C_m}$ of type $C_m$ can be realized as a Coxeter group with generators $s_0,s_1,\ldots,s_{m-1}$ subject to the relations $s_i^2=e$ for all $i$, and for $i,j\neq 0$ $s_is_j=s_js_i$ if $\vert i-j\vert>1$ and $s_is_js_i=s_js_is_j$ if $\vert i-j\vert=1$, and additionally $s_0s_1s_0s_1=s_1s_0s_1s_0$ and $s_0s_j=s_js_0$ for $j\neq 1$. The following proposition is a classical result, see e.g.\ \cite[\S 8.2]{Ser77}, \cite[Theorem 10.1.2]{C93}, \cite[Appendix B]{Mac95} and also~\cite[Proposition 3]{MS16} for a categorical approach. 

\begin{prop}
There exists a bijection between complex, finite-dimensional, irreducible $\mathcal W_{C_m}$-modules (up to isomorphism) and bipartitions of $m$. Given a bipartition $(\lambda,\mu)$ of $m$, we write $V_{(\lambda,\mu)}$ to denote the corresponding irreducible $\mathcal W_{C_m}$-module.
\end{prop}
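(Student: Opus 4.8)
The plan is to prove this classical fact via Clifford theory (the ``method of little groups'' of Wigner and Mackey) applied to the wreath product structure of $\mathcal W_{C_m}$; a complete treatment is contained in any of the cited references, so I only outline the argument. First I would identify $\mathcal W_{C_m}$ with the group of signed permutations, i.e.\ with the semidirect product $A\rtimes S_m$ where $A=(\mathbb Z/2\mathbb Z)^m$ and $S_m$ acts by permuting the coordinates; under the given Coxeter presentation $s_1,\dots,s_{m-1}$ generate the $S_m$ factor as adjacent transpositions and $s_0$ is the sign change on one distinguished coordinate, with the relation $s_0s_1s_0s_1=s_1s_0s_1s_0$ encoding the interaction. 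The subgroup $A$ is normal and abelian.

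Next I would analyze the action of $S_m$ on the character group $\widehat A\cong(\mathbb Z/2\mathbb Z)^m$: a character of $A$ corresponds to a subset $I\subseteq\{1,\dots,m\}$ (the coordinates on which it takes the value $-1$), and $S_m$ permutes these subsets, so the orbits are indexed by $j=|I|$ with $0\le j\le m$. Choosing the representative $\chi_j$ supported on $\{1,\dots,j\}$, its stabilizer in $S_m$ is the Young subgroup $S_j\times S_{m-j}$, giving the inertia group $H_j=A\rtimes(S_j\times S_{m-j})$. The one genuinely ``do-something'' step is to check that $\chi_j$ extends to a linear character $\widetilde{\chi_j}$ of $H_j$: this holds because $\chi_j$ is $S_j\times S_{m-j}$-invariant, so $\widetilde{\chi_j}(a\cdot\sigma):=\chi_j(a)$ is a well-defined homomorphism (there is no cohomological obstruction for this split extension). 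Gallagher's theorem and Clifford's induction criterion then show that the irreducibles of $\mathcal W_{C_m}$ lying over the $S_m$-orbit of $\chi_j$ are exactly
\[
V_{(\lambda,\mu)}:=\mathrm{Ind}_{H_j}^{\mathcal W_{C_m}}\Bigl(\widetilde{\chi_j}\otimes\mathrm{infl}\bigl(S^{\mu}\boxtimes S^{\lambda}\bigr)\Bigr),\qquad |\mu|=j,\ |\lambda|=m-j,
\]
where $S^\nu$ denotes the Specht module of the relevant symmetric group and the inflation is along the quotient $H_j\twoheadrightarrow S_j\times S_{m-j}$; distinct bipartitions $(\lambda,\mu)$ yield pairwise non-isomorphic irreducibles, and every irreducible of $\mathcal W_{C_m}$ arises this way since it must lie over some $S_m$-orbit in $\widehat A$.

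Finally I would confirm completeness by a counting sanity check: the conjugacy classes of $\mathcal W_{C_m}$ are classically indexed by ordered pairs of partitions (recording the cycle types of the ``positive'' and ``negative'' cycles of a signed permutation), so their number equals the number of bipartitions of $m$, which matches the number of irreducibles produced above. The main --- and essentially only --- obstacle is bookkeeping: which partition of the pair plays which role (the one tracking sign changes, of size $j$, versus the other) and matching the resulting labeling with the conventions already used in the paper, in particular with the exotic Springer correspondence of~\cite{Kat11} and with Can's bipolytabloid basis~\cite{Can96}. This is a matter of fixing conventions rather than of mathematical content, and for the purposes of the present article one may simply adopt whichever normalization makes Proposition~\ref{prop:Weyl_group_action_cohomology} hold.
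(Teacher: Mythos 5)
Your outline is correct and is precisely the classical little-groups/Clifford-theory argument for the hyperoctahedral wreath product $(\mathbb Z/2\mathbb Z)\wr S_m$ that the paper's cited references (especially Serre \S 8.2, and Macdonald's Appendix B) carry out; the paper itself does not prove the proposition but simply defers to those sources. The one detail worth keeping in mind is the labeling convention issue you flag at the end, since the paper fixes $V_{((m),(-))}$ to be the trivial module, which forces $\mu$ to be the partition tracking the size-$j$ orbit (sign changes) exactly as you set it up.
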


\begin{rem}
Here, we follow the labeling conventions used in~\cite[\S4]{SW16} (see also Remark~\ref{rem:connection_to_Kato}). In particular, the trivial one-dimensional $\mathcal W_{C_m}$-module is labeled by $((m),(-))$.
\end{rem}

Let $\sigma_0\colon\mathbb R^3\to\mathbb R^3$ be the homeomorphism given by $(x,y,z)\mapsto (-x,y,z)$. It induces a homeomorphism $\mathbb S^2 \xrightarrow\cong\mathbb S^2$ which (by abuse of notation) we also denote by $\sigma_0$.

\begin{prop}\label{prop:Weyl_group_action_cohomology}
Let $0\leq l\leq m-k$. The following statements hold:
\begin{enumerate}[(a)]
\item\label{item:action_on_space} The Weyl group $\mathcal W_{C_m}$ acts on $\mathrm{Sk}_{2(m-k)}^m$ by 
\[
s_0.\left(x_1,x_2,\ldots,x_m\right)=\left(\sigma_0(x_1),x_2,\ldots,x_m\right)\,,
\]
\[
s_i.(x_1,\ldots,x_i,x_{i+1},\ldots,x_m)=(x_1,\ldots,x_{i+1},x_i,\ldots,x_m)\,,\,i\neq 0.
\]
\item\label{item:action_on_cohomology} The induced $\mathcal W_{C_m}$-action on $H^*(\mathrm{Sk}_{2(m-k)}^m,\mathbb C)$ can be described explicitly on the monomial basis of $H^{2l}(\mathrm{Sk}_{2(m-k)}^m,\mathbb C)$ from Theorem~\ref{thm:cohomology_ring} as follows: a generator $s_i$, $i\neq 0$, acts by exchanging $X_i$ and $X_{i+1}$ and leaves $X_j$ invariant for $j\neq i,i+1$, and $s_0$ acts by sending $X_1$ to $-X_1$ and leaves all other $X_i$, $i\neq 1$, invariant.
\item\label{item:identifying_reps} The action from \ref{item:action_on_cohomology} yields an isomorphism $H^{2l}(\mathrm{Sk}_{2(m-k)}^m,\mathbb C)\cong V_{((m-l),(l))}$ of $\mathcal W_{C_m}$-modules.
\end{enumerate}
\end{prop}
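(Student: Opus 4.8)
The plan is to establish parts~\ref{item:action_on_space}, \ref{item:action_on_cohomology} and~\ref{item:identifying_reps} in turn, with essentially all the content in the last one. For part~\ref{item:action_on_space}, I would first note that each generator acts on $(\mathbb S^2)^m$ (with its product CW-structure) by a cellular homeomorphism: the $s_i$ with $i\neq0$ permute the sphere factors, while $\sigma_0$ is the restriction to $\mathbb S^2$ of the reflection of $\mathbb R^3$ in the plane $x=0$, which fixes the north pole $p=(0,0,1)$ and hence induces a cellular self-map of $\mathbb S^2=\{p\}\cup(\mathbb S^2\setminus\{p\})$. Since cellular homeomorphisms preserve every skeleton, these formulas do define self-homeomorphisms of $\mathrm{Sk}_{2(m-k)}^m$. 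It then remains to verify the Coxeter relations of $\mathcal W_{C_m}$: the relations $s_i^2=e$, the symmetric-group relations among $s_1,\dots,s_{m-1}$, and $s_0s_j=s_js_0$ for $j\geq2$ are immediate, and the only one requiring a short computation is $s_0s_1s_0s_1=s_1s_0s_1s_0$, both sides of which send $(x_1,x_2,\dots,x_m)$ to $(\sigma_0(x_1),\sigma_0(x_2),x_3,\dots,x_m)$ because $\sigma_0^2=\mathrm{id}$.

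For part~\ref{item:action_on_cohomology}, using Theorem~\ref{thm:main_result_1} together with the presentation obtained in the proof of Theorem~\ref{thm:cohomology_ring}, it suffices to compute the induced action on $H^*((\mathbb S^2)^m,\mathbb C)\cong\mathbb C[X_1,\dots,X_m]/\langle X_i^2\rangle$ and then pass to the quotient presenting $H^*(\mathrm{Sk}_{2(m-k)}^m,\mathbb C)$; the kernel ideal of that quotient is stable, as permutations permute its monomial generators and $s_0$ scales them by $\pm1$. On $H^*((\mathbb S^2)^m,\mathbb C)=H^*(\mathbb S^2,\mathbb C)^{\otimes m}$ the homeomorphism $s_i$ ($i\neq0$) transposes the $i$-th and $(i+1)$-st tensor factors and hence swaps $X_i$ and $X_{i+1}$, while $\sigma_0$ is orientation-reversing on $\mathbb S^2$, so $\sigma_0^*=-\mathrm{id}$ on $H^2(\mathbb S^2,\mathbb C)$ and $s_0$ sends $X_1\mapsto-X_1$ and fixes the remaining $X_j$.

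For part~\ref{item:identifying_reps}, fix $0\leq l\leq m-k$. The monomials $X_I$ with $I\subseteq\{1,\dots,m\}$ and $\vert I\vert=l$ form a basis of $H^{2l}(\mathrm{Sk}_{2(m-k)}^m,\mathbb C)$, so its dimension is $\binom ml$, which also equals $\dim V_{((m-l),(l))}$ by the Clifford-theory dimension formula (the Specht modules of the one-row shapes $(m-l)$ and $(l)$ being one-dimensional). By part~\ref{item:action_on_cohomology} the subgroup $\langle s_1,\dots,s_{m-1}\rangle\cong S_m$ permutes the lines $\mathbb C X_I$ transitively via the tautological action on $l$-subsets, and $\mathcal W_{C_m}\cong(\mathbb Z/2)^m\rtimes S_m$ acts on each such line by a character; hence $H^{2l}(\mathrm{Sk}_{2(m-k)}^m,\mathbb C)\cong\mathrm{Ind}_{G_0}^{\mathcal W_{C_m}}\mathbb C_{X_{I_0}}$, where $I_0=\{1,\dots,l\}$ and $G_0$ is the stabilizer of the line $\mathbb C X_{I_0}$. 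I would then check directly that $G_0\cong\mathcal W_{C_l}\times\mathcal W_{C_{m-l}}$ (the first factor acting on coordinates $1,\dots,l$ and the second on the rest), that its symmetric-group part fixes $X_{I_0}$, that the sign changes in coordinates $1,\dots,l$ negate $X_{I_0}$, and that those in coordinates $l+1,\dots,m$ fix it. Thus $\mathbb C_{X_{I_0}}$, as a $\mathcal W_{C_l}\times\mathcal W_{C_{m-l}}$-module, is the external product of the one-dimensional character of $\mathcal W_{C_l}$ that is trivial on $S_l$ and the determinant on $(\mathbb Z/2)^l$ (namely $V_{((-),(l))}$) with the trivial module $V_{((m-l),(-))}$ of $\mathcal W_{C_{m-l}}$, and the standard description of the irreducible $\mathcal W_{C_m}$-modules as induction products (see \cite[Appendix B]{Mac95}, with the conventions of \cite[\S4]{SW16}) identifies $\mathrm{Ind}_{\mathcal W_{C_l}\times\mathcal W_{C_{m-l}}}^{\mathcal W_{C_m}}\bigl(V_{((-),(l))}\boxtimes V_{((m-l),(-))}\bigr)$ with $V_{((m-l),(l))}$.

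The main obstacle I anticipate is one of bookkeeping: to match the induced module above with $V_{((m-l),(l))}$ rather than $V_{((l),(m-l))}$ or a sign twist, one must carefully pin down which Specht module and which linear character of $(\mathbb Z/2)^{\bullet}$ the conventions of \cite[\S4]{SW16} attach to each part of a bipartition (the edge cases $l=0$ and $l=m$, giving the trivial and the determinant character respectively, are a useful consistency check). A convention-robust alternative is to compute the character of $H^{2l}$ directly---$\mathrm{tr}(w)$ is the count of $w$-stable $l$-subsets, each weighted by $(-1)$ raised to the number of its coordinates that $w$ negates---and compare it with the known character formula for $V_{((m-l),(l))}$; specializing to $l=m-k$ then recovers Kato's exotic Springer representation, in agreement with Remark~\ref{rem:connection_to_Kato}.
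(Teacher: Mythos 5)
Your proof is correct, and parts~(a) and~(b) follow essentially the same reasoning as the paper (the paper's argument for~(a) is phrased cell-by-cell, $s_0.C_{l_U}=C_{l_U}$ and $s_i.C_{l_U}=C_{l_{s_i.U}}$, whereas you invoke the general fact that cellular homeomorphisms preserve skeleta; for~(b) the paper uses that $\sigma_0$ is homotopic to the antipodal map rather than invoking orientation-reversal directly, but this is the same computation of $\sigma_0^*=-\mathrm{id}$ on $H^2(\mathbb S^2)$). The genuine difference is in part~(c). The paper transports the question to $H_{2l}((\mathbb S^2)^m,\mathbb C)$ with its line-diagram basis and then simply cites~\cite[Proposition~40]{SW16} for the identification with $V_{((m-l),(l))}$, observing at the end that the map $l_U\mapsto X_{i_1}\cdots X_{i_l}$ is $\mathcal W_{C_m}$-equivariant. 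You instead rederive the identification from scratch via Clifford theory for $(\mathbb Z/2)^m\rtimes S_m$: the lines $\mathbb C X_I$ are the $(\mathbb Z/2)^m$-eigenspaces, $S_m$ permutes them transitively, the stabilizer of $\mathbb C X_{I_0}$ is $\mathcal W_{C_l}\times\mathcal W_{C_{m-l}}$ acting by $V_{((-),(l))}\boxtimes V_{((m-l),(-))}$, and the induction product realizes $V_{((m-l),(l))}$. This is self-contained where the paper outsources the key step, at the cost of the bookkeeping issue you flag: one must pin down exactly which one-dimensional character of $\mathcal W_{C_l}$ the conventions of \cite[\S4]{SW16} attach to $((-),(l))$ versus $((1^l),(-))$, etc. Your proposed character-theoretic cross-check (counting $w$-stable $l$-subsets weighted by $\pm1$ and comparing to the known character of $V_{((m-l),(l))}$) is a clean, convention-independent way to close that gap and would make the argument fully rigorous without reference to~\cite{SW16} at all.
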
 
\begin{proof}
Since $\sigma_0$ is an involution of $(\mathbb S^2)^m$ fixing the north pole $p=(0,0,1)$, we see that $s_0.C_{l_U}=C_{l_U}$ for all $U\subseteq\{1,\ldots,m\}$, $\vert U\vert=m-k$. Moreover, for $i\neq 0$ we have $s_i.C_{l_U}=C_{l_{s_i.U}}$, where $s_i.U\subseteq \{1,\ldots,m\}$ is the set containing all elements obtained by applying the permutation $s_i\colon\{1,\ldots,m\}\to\{1,\ldots,m\}$ switching $i$ and $i+1$ to the elements in $U$. Since $\vert s_i.U\vert=m-k$ for $1\leq i\leq m-1$, we conclude that $s_i.(x_1,\ldots,x_m)\in\mathrm{Sk}_{2(m-k)}^m$ for all $(x_1,\ldots,x_m)\in\mathrm{Sk}_{2(m-k)}^m$ and $0\leq i\leq m-1$. It is straightforward to check that the $s_i$, $0\leq i\leq m-1$, satisfy the defining relations of $\mathcal W_{C_m}$ and we obtain a well-defined $\mathcal W_{C_m}$-action on $\mathrm{Sk}_{2(m-k)}^m$ as claimed in Proposition~\ref{prop:Weyl_group_action_cohomology}\ref{item:action_on_space}.     

Note that the $\mathcal W_{C_m}$-action on $\mathrm{Sk}_{2(m-k)}^m$ in~\ref{item:action_on_space} extends to an action on $(\mathbb S^2)^m$. Since the map $\sigma_0$ is homotopic to the antipodal map it follows from standard algebraic topology that the induced action of $s_0$ on $H^*((\mathbb S^2)^m,\mathbb C)\cong\mathbb C[X_1,\ldots,X_m]/(X_i^2)$ is given by sending $X_1$ to $-X_1$ and leaving all other $X_i$ invariant, $i\neq 1$. Moreover, for $i\neq 0$, the induced action of $s_i$ is given by exchanging $X_i$ and $X_{i+1}$ and leaving $X_j$ invariant for $j\neq i,i+1$. Since the inclusion $\mathrm{Sk}_{2(m-k)}^m\subseteq (\mathbb S^2)^m$ is clearly $\mathcal W_{C_m}$-equivariant, the above shows that the $\mathcal W_{C_m}$-action on $\mathrm{Sk}_{2(m-k)}^m$ defined in~\ref{item:action_on_space} induces the action on $H^*(\mathrm{Sk}_{2(m-k)}^m,\mathbb C)$ as claimed in~\ref{item:action_on_cohomology} via the isomorphism (\ref{eq:surjection_cohomology}). 

In order to identify the irreducible representation in each cohomological degree we first note that the $\mathcal W_{C_m}$-action on $(\mathbb S^2)^m$ obtained by extending the action on $\mathrm{Sk}_{2(m-k)}^m$ from~\ref{item:action_on_space} also induces an action on homology $H_*((\mathbb S^2)^m,\mathbb C)\cong\mathbb C[\mathfrak{L}_m]$ which can be described explicitly using the line diagram basis. A generator $s_i$, $i\neq 0$, acts on a line diagram by permuting the lines $i$ and $i+1$, and $s_0$ acts by sending a line diagram to its additive inverse if the first line is undotted and leaves it invariant otherwise. By~\cite[Proposition 40]{SW16} we have an isomorphism of $\mathcal W_{C_m}$-modules $H_{2l}((\mathbb S^2)^m,\mathbb C)\cong V_{((m-l),(l))}$. From the explicit descriptions of the $\mathcal W_{C_m}$-actions it is evident that the map $H_{2l}((\mathbb S^2)^m,\mathbb C)\cong H^{2l}(\mathrm{Sk}_{2(m-k)}^m,\mathbb C)$ sending $l_U$ to $X_{i_1}\ldots X_{i_l}$, where $U=\{i_1,\ldots,i_l\}$, is an isomorphism of $\mathcal W_{C_m}$-modules and we obtain the isomorphisms as claimed in Proposition~\ref{prop:Weyl_group_action_cohomology}\ref{item:identifying_reps}.
\end{proof}

\begin{rem}\label{rem:connection_to_Kato}
By Theorem~\ref{thm:cohomology_ring} we found a topological space isomorphic to the exotic Springer fiber $\mathcal Fl^{((k),(m-k))}$ in the homotopy category of topological spaces. In contrast to the exotic Springer fiber itself (and also its homeomorphic topological model from Theorem~\ref{thm:main_result_1}), this homotopy equivalent space has the property that it admits an action of the Weyl group by Proposition~\ref{prop:Weyl_group_action_cohomology}\ref{item:action_on_space}. By combining Proposition~\ref{prop:Weyl_group_action_cohomology}\ref{item:identifying_reps} with results from \cite{Kat17}, the induced action in cohomology indeed recovers the original exotic Springer representation~\cite{Kat11} and hence can be used as a toy model for Kato's more involved construction. Moreover, by the proof of Proposition~\ref{prop:Weyl_group_action_cohomology} and the discussion in~\cite[\S4]{SW16}, we can identify the monomial basis of our presentation of $H_{2l}(\mathcal Fl^{((k),(m-k))},\mathbb C)\cong H^{2l}(\mathrm{Sk}_{2(m-k)}^m,\mathbb C)$ from Theorem~\ref{thm:cohomology_ring} as a particularly nice cohomology basis. More precisely, under the isomorphism of Proposition~\ref{prop:Weyl_group_action_cohomology}, each monomial corresponds to a basis vector labeled by a standard bitableau if we construct $V_{((m-k),(k))}$ as a Specht module by following, e.g.,~\cite{Can96}.
\end{rem}

\bibliographystyle{amsalpha}
\bibliography{litlist}

\end{document}